\newtheorem{theorem}{Theorem}
\newtheorem{lemma}[theorem]{Lemma}
\newtheorem{observation}[theorem]{Observation}
\newtheorem{problem}[theorem]{Problem}
\newtheorem{example}[theorem]{Example}
\newtheorem{corollary}[theorem]{Corollary}
\newtheorem{conjecture}[theorem]{Conjecture}
\theoremstyle{definition}
\newtheorem{definition}[theorem]{Definition}
\theoremstyle{remark}
\newtheorem{remark}[theorem]{Remark}
\theoremstyle{comment}
\numberwithin{equation}{section}
\numberwithin{theorem}{section}
\numberwithin{figure}{section}
\newcommand{\1}{\mathbf{1}}
\newcommand{\ESA}{{\rm ESA}}
\def\1{\mathbf{1}}
\def\f2{\mathbb{F}_2}
\def\supp{\hskip0.02cm{\rm supp}\hskip0.01cm}
\newcommand{\ep}{\varepsilon}
\newcommand{\sign}{{\rm sign}\hskip0.02cm}
\newcommand{\bbN}{\mathbb{N}}
\newcommand{\one}{\mathbf{1}}
\newcommand{\PP}{{\mathcal{P}}}
\newcommand{\al}{\alpha}
\newcommand{\be}{\beta}
\newcommand{\g}{\gamma}
\newcommand{\de}{\delta}
\newcommand{\e}{\varepsilon}
\newcommand{\la}{\lambda}
\newcommand{\om}{\omega}
\newcommand{\bbZ}{\mathbb{Z}}
\newcommand{\RR}{\operatorname{{\tt Ref}_n}}
\newcommand{\RRnu}{\operatorname{{\tt Ref}_{n,\nu}}}
\newcommand{\card}{\operatorname{card}}
\newcommand{\disp}{\displaystyle}
\newcommand{\lb}{\label}
\newcommand{\wtw}{if and only if}
\newcommand{\buoo}{without loss of generality}
\newcommand{\Buo}{Without loss of generality }
\newcommand{\Buoo}{Without loss of generality}
\newcommand{\DEF}{\buildrel {\mbox{\tiny def}}\over =}
\newcommand\remove[1]{}
\begin{document}

\title{A new approach to low-distortion embeddings of finite metric spaces into non-superreflexive Banach spaces}

\author{Mikhail~I.~Ostrovskii and Beata~Randrianantoanina}

\date{~}

\maketitle



\begin{abstract}
The main goal of this paper is to develop a new embedding method
which we use to show that some finite metric spaces admit
low-distortion embeddings into all non-superreflexive spaces. This
method is based on the theory of equal-signs-additive
sequences developed by Brunel and Sucheston (1975-1976). We also
show that some of the low-distortion embeddability results
obtained using this method cannot be obtained using the method
based on the factorization between the summing basis and the unit
vector basis of $\ell_1$, which was used by Bourgain (1986) and
Johnson and Schechtman (2009).
\end{abstract}

{\small \noindent{\bf Keywords:} diamond graph;
equal-signs-additive sequence; metric characterization;
superreflexive Banach space

\noindent{\bf 2010 Mathematics Subject Classification.} Primary:
46B85; Secondary: 05C12, 30L05, 46B07}



\section{Introduction}

One of the basic problems of the theory of metric embeddings is:
given some Banach space or a natural class $\mathcal{P}$ of Banach
spaces find classes of metric spaces which admit low-distortion
embeddings into each Banach space of the class $\mathcal{P}$. The
main goal of this paper is to develop a new embedding method which
can be used to show that some finite metric spaces admit
low-distortion embeddings into all non-superreflexive spaces
(Theorem~\ref{T:Main}). This method is based on the theory of
equal-signs-additive sequences (ESA) developed by Brunel and
Sucheston \cite{BS74,BS75,BS76}. We show in
Theo\-rem~\ref{no-diamonds-in-X-Delta} that some of the
low-distortion embeddability results obtained using this method
cannot be obtained using the method
 based on the factorization between the
summing basis and the unit vector basis of $\ell_1$, which was
used by Bourgain \cite{Bou86} and Johnson and Schechtman
\cite{JS09}, see Corollary \ref{C:UsedBJS}.

The problem mentioned at the beginning of the previous paragraph
can be regarded as one side of the problem of metric
characterization of the class $\mathcal{P}$. Recall that, in the
most general sense, a \emph{metric characterization} of a class of
Banach spaces  is a characterization which refers only to the
metric structure of a Banach space and does not involve the linear
structure. The study of metric characterizations became an active
research direction in mid-1980s, in the work of Bourgain
\cite{Bou86} and Bourgain, Milman, and Wolfson \cite{BMW86} (see
also Pisier \cite[Chapter 7]{Pis86}). The work on metric
characterization of isomorphic invariants of Banach spaces
determined by their finite-dimensional subspaces, and on
generalization of the obtained theory to general metric spaces
became known as the {\it Ribe program}, see \cite{Bal13,Nao12}.
The type of metric characterizations which is closely related to
the present  paper is the following:

\begin{definition}[\cite{Ost13a}]\label{D:TestSpace}
Let $\mathcal{P}$ be a class of Banach spaces and let
$T=\{T_\alpha\}_{\alpha\in A}$ be a set of metric spaces. We say
that $T$ is a set of {\it test-spaces} for $\mathcal{P}$ if the
following two conditions are equivalent for a Banach space $X$:
{\bf (1)} $X\notin\mathcal{P}$; {\bf (2)} The spaces
$\{T_\alpha\}_{\alpha\in A}$ admit bilipschitz embeddings into $X$
with uniformly bounded distortions.
\end{definition}

There are several known different  sets of finite test-spaces for
superreflexivity of Banach spaces, including: the set  of all
finite binary trees (Bourgain \cite{Bou86}, see also
\cite{Mat99,Klo14}),   the set of diamond graphs, and the set of
Laakso graphs (Johnson and Schechtman \cite{JS09}, see also
\cite{Ost11}).  In \cite{Ost14,OO15+,LNOO15+} it was shown that
these sets of  test-spaces are independent in the  sense that the
respective families of metric spaces do not admit bilipschitz
embeddings one into another with uniformly bounded distortions.

There are also metric characterizations of superreflexivity using
only one metric test-space. Baudier \cite{Bau07} proved that
 the infinite binary tree is such a test-space, many other one-element test-spaces for superreflexivity were described in
\cite{Ost14}. See \cite{Ost16} for a survey on metric
characterizations of superreflexivity.

The first main result of the present paper  is a construction of
bilipschitz embeddings with a uniform bound on distortions of
diamond graphs with arbitrary finite number of branches  into any
non-superreflexive Banach space. Multibranching diamonds are a
generalization of usual (binary) diamond graphs. Their embedding
properties were first studied  in \cite{LR10}.

\begin{definition}[cf. \cite{LR10}]\label{D:BranchDiam}
For  any integer  $k\ge 2$, we define $D_{0,k}$ to be the graph
consisting of two vertices joined by one edge. For any $n\in\bbN$,
if the graph $D_{n-1,k}$  is already defined, the graph $D_{n,k}$
is defined as the graph obtained from $D_{n-1,k}$ by replacing
each edge $uv$ in $D_{n-1,k}$ by a set of $k$  independent paths
of length $2$ joining $u$ and $v$. We endow $D_{n,k}$ with the
shortest path distance. We call $\{D_{n,k}\}_{n=0}^\infty$ {\it
diamond graphs of branching $k$}, or {\it diamonds of branching}
$k$.
\end{definition}

We prove

\begin{theorem}\label{T:Main} For every $\ep>0$, any non-superreflexive Banach
space $X$, and any $n,k\in \mathbb{N}$, $k\ge 2$, there exists a
bilipschitz embedding of $D_{n,k}$ into $X$ with distortion at
most $8+\ep$.
\end{theorem}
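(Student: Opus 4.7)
The plan is to combine the Brunel--Sucheston theory of equal-signs-additive sequences with an inductive vertex-assignment scheme mirroring the recursive definition of $D_{n,k}$. First, I would extract inside $X$ a finite ``ESA skeleton''. Since $X$ is not superreflexive, James's characterization supplies, for every $\delta>0$ and every $N\in\bbN$, vectors $y_1,\dots,y_N\in B_X$ such that
\[
\left\|\sum_{i=1}^{p}y_i-\sum_{i=p+1}^{N}y_i\right\|\ge (1-\delta)\,N \quad\text{for every } 1\le p<N.
\]
The standard Brunel--Sucheston procedure (extract a subsequence and pass to the spreading model, then transfer back via finite representability) then produces, for any prescribed $M$ and any $\delta>0$, a sequence $(e_1,\dots,e_M)$ in $X$ whose signed block-sum norms agree, up to factor $(1+\delta)$, with those of a genuine ESA sequence; in particular, partial sums of constant-sign blocks satisfy the linear-growth law characteristic of ESA.

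Second, I would define $f:D_{n,k}\to X$ by induction on $n$, using the recursive decomposition of $D_{n,k}$ into $D_{1,k}$ with each of its $2k$ edges replaced by a copy of $D_{n-1,k}$. The top vertex of $D_{n,k}$ is placed at the origin and the bottom vertex at an appropriate signed sum of the $e_i$'s chosen so that $\|f(b)-f(t)\|$ matches the graph distance $2^n$ up to the target distortion. The index set $\{1,\dots,M_n\}$ with $M_n=(2k)^n$ is partitioned into $2k$ consecutive sub-blocks of equal length, one per edge of the outer $D_{1,k}$; the embedding of the inner copy of $D_{n-1,k}$ attached to that edge is obtained by applying the inductive construction to the corresponding sub-block of ESA vectors, suitably translated. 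Every vertex $v\in D_{n,k}$ thus receives a vector of the form $f(v)=\sum_i\sigma_i(v)\,e_i$, whose sign pattern $(\sigma_i(v))$ is dictated by the descent in the recursion tree from the top to $v$.

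Third, I would verify the distortion bounds. The upper Lipschitz estimate follows from the triangle inequality together with the observation that, between adjacent vertices of $D_{n,k}$, the sign patterns $(\sigma_i(u))$ and $(\sigma_i(v))$ differ on a block whose length matches the graph edge in the natural way, so $\|f(u)-f(v)\|$ is controlled by $\|e_i\|\le 1$ and the size of that block. The lower Lipschitz estimate is the point where the ESA property is indispensable: for arbitrary $u,v\in D_{n,k}$ the difference $f(u)-f(v)$ is a signed ESA block-sum whose norm, by equal-signs additivity, is bounded below (up to an absolute constant) by the length of its longest constant-sign block, which a direct combinatorial analysis identifies with a quantity comparable to $d_{D_{n,k}}(u,v)$.

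The principal obstacle will be to obtain a distortion bound \emph{independent} of both $n$ and $k$ in the lower Lipschitz estimate when $u$ and $v$ lie in distinct branches of a common diamond level, so that $f(u)-f(v)$ has a genuinely mixed sign pattern. The James-type bound used by Bourgain and Johnson--Schechtman for binary diamonds controls only a single sign change and so degrades with~$k$; the full equal-signs-additivity property is precisely the extra structure needed to handle the sign patterns arising from arbitrary branching and arbitrary recursion depth with the single uniform constant~$8+\ep$.
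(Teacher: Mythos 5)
Your proposal follows the paper's overall skeleton (reduce to spaces with an ESA basis via Brunel--Sucheston, build a sign-pattern embedding indexed by the recursive structure of $D_{n,k}$, get the Lipschitz bound from the triangle inequality, and get the co-Lipschitz bound from ESA/SA manipulations), but there is a genuine gap exactly at the decisive step. The construction you sketch --- partition $\{1,\dots,(2k)^n\}$ into $2k$ consecutive sub-blocks, one per edge of the outer $D_{1,k}$, recurse on sub-blocks after a translation, with deterministic sign patterns --- is not the paper's construction, and the lower estimate for pairs of vertices lying on different branches, which you yourself flag as ``the principal obstacle,'' is precisely where no argument is given. The principle ``the norm is bounded below by the length of the longest constant-sign block'' is not a valid lower bound for an arbitrary ESA norm applied to the differences your scheme produces: for two of the $k$ midpoints of a (sub)diamond, $f(v_i)-f(v_j)$ consists of two disjointly supported, internally balanced $\pm$ patterns whose constant-sign blocks live at the finest scale of the recursion, not at the scale of $d_{D_{n,k}}(v_i,v_j)$; SA lets you collapse adjacent coordinates, but what one can collapse \emph{to}, uniformly over all ESA norms and uniformly in $n$ and $k$, is exactly what must be proved. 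The paper's mechanism is different and is the heart of its proof: every vertex is spread over $2^M$ identical blocks ($M=k+k^2+\cdots+k^n$), each block a shifted copy of $h^{(n)}$ ($2^n$ coordinates $+1$ followed by $2^n$ coordinates $-1$) with an antisymmetric reflection structure, and the nested subinterval of $h^{(n)}$ occupied by a given branch in block $\nu$ is chosen by independent Rademacher functions $r_J(\nu)$ indexed by branch labels. Independence guarantees that on a quarter of the blocks two given branches make opposite choices; SA is then used to discard the remaining blocks and the cancelling ``central'' parts, IS and ESA collapse what is left to a long alternating pattern, and a doubling/triangle-inequality argument compares it with $\|x_1^{(n)}\|$, giving the uniform constant $8$ in \eqref{E:below}. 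The same within-block antisymmetry makes all ``vertical'' distances exact (each edge maps to a difference of norm exactly $2^{-n}\|x_1^{(n)}\|$), so a single scale factor works for the whole map; your scheme does not control the shapes of edge-difference patterns uniformly across scales in a general ESA space, so even the normalization implicit in your Lipschitz bound is not justified.

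A secondary, repairable issue is the reduction step: extracting a finite ``approximately ESA'' system directly from James's non-superreflexivity condition is more delicate than your one-sentence description (one needs the spreading model and then the Brunel--Sucheston procedure passing from IS to ESA). The paper avoids this by quoting Theorem~\ref{T:BS} to obtain a Banach space $E$ with a genuine ESA basis finitely representable in $X$, proving the distortion-$8$ embedding into an arbitrary space with an ESA basis, and then using the finiteness of $D_{n,k}$ together with finite representability to pass to $X$ with distortion $8+\varepsilon$. That part of your plan can be fixed by citation; the missing separation mechanism in the construction cannot, and without an analogue of the randomized repeated-block device (or some other device playing its role) the proposed proof does not go through.
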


  In particular, Theorem~\ref{T:Main} together with the result of \cite{JS09} implies that the set of all diamond graphs of arbitrary finite branching
is a set of test-spaces for superreflexivity.

To prove Theorem~\ref{T:Main} we develop a novel technique of
constructing low-distortion embeddings of finite metric spaces
into non-superreflexive Banach spaces. This technique, which we
consider   the main contribution of the present paper, relies on
the concept of equal-sign-additive (ESA) sequences developed by
Brunel and Sucheston \cite{BS74,BS75,BS76} in their deep study of
superreflexivity. Our construction relies on  ESA basic sequences
and on, now standard, use of independent random variables, to
identify in any  non-superreflexive Banach space an element $x$
with multiple well-separated (exact) metric midpoints between $x$
and $0$,  with an additional property that the selected metric
midpoints between $x$ and $0$ have a structure sufficiently
similar to the element $x$, so that the procedure of selecting
multiple  well-separated metric midpoints  can be iterated the
desired number of times. The construction and the proof are
presented in Section~\ref{S:D1k}. We have not attempted to find
the best distortion constant in Theorem~\ref{T:Main}. We do not
expect that $8+\e$ is   best possible. In Section~\ref{S:BS}, we
briefly recall the definitions and results from
\cite{BS74,BS75,BS76} that we use.

It is clear that our techniques work for somewhat larger families
of graphs. In particular, in Section~\ref{S:Laakso} we outline a
proof of an analogue of Theorem~\ref{T:Main} for a set of Laakso
graphs with arbitrary finite branching (cf.
Definition~\ref{D:multiLaakso} and Theorem~\ref{T:Laakso}).
However in more general cases the technical details become much
more complicated. We decided to focus our attention in this paper
on the construction of low-distortion embeddings in the case of
multibranching diamonds, so that the main ideas of the
construction are more transparent, and because, as we explain
below,   this case cannot be proved  by using previously known
methods.  Also, in recent years diamond graphs
of high branching have appeared naturally in different contexts,
cf. \cite{BCDKRSZ16+,LR10,OR16}.

The next main result of the present  paper
(Theorem~\ref{no-diamonds-in-X-Delta}) shows that the new
technique  that we develop is inherently different from the known
before method of constructing metric embeddings into
non-superreflexive Banach spaces (Bourgain \cite{Bou86} and
Johnson-Schechtman \cite{JS09}). Their method   is based
on the following result which emerged in the following sequence of
papers: Pt\'ak \cite{Pta59}, Singer \cite{Sin62}, Pe\l czy\'nski
\cite{Pel62}, James \cite{Jam64}, Milman-Milman \cite{MM65}.
Denote by $\|\cdot\|_1$  the standard norm on $\ell_1$, and by $\|\cdot\|_s$   the summing norm on $\ell_1$, that is,
\[\|(a_i)_{i=1}^\infty\|_s\DEF\sup_k\left|\sum_{i=1}^k
a_i\right|.\]
It is clear that $(\ell_1,\|\cdot\|_s)$ is a normed
space, but not a Banach space.

\begin{theorem} A Banach space $X$ is nonreflexive if and only if the
identity operator $I:(\ell_1,\|\cdot\|_1)\to(\ell_1,\|\cdot\|_s)$ factors
through $X$ in the following sense: there are bounded linear
operators $S:(\ell_1,\|\cdot\|_1)\to X$ and $T:S(\ell_1)\to
(\ell_1,\|\cdot\|_s)$ such that $I=TS$. Furthermore, if $X$ is
nonreflexive, then there is a factorization $I=TS$ through $X$, as above,
such that the product $\|T\|\cdot\|S\|$ is bounded by a constant
$\Pi$ which does not depend on $X$.
\end{theorem}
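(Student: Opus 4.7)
The plan is to prove the two directions separately, with the main ingredient being James' characterization of nonreflexivity. For the easy ``if'' direction, I would show that the identity operator $I:(\ell_1,\|\cdot\|_1)\to(\ell_1,\|\cdot\|_s)$ is not weakly compact: the unit vectors $(e_n)$ are bounded in $\|\cdot\|_1$, but in the summing norm they are equivalent to the usual summing basis and admit no weakly convergent subsequence (one sees this by embedding $(\ell_1,\|\cdot\|_s)$ into $\ell_\infty$ via $a\mapsto(\sum_{i\le k}a_i)_k$). Since in a reflexive space every bounded linear operator is weakly compact, any factorization $I=TS$ through a reflexive $X$ would force $I$ to be weakly compact, a contradiction.

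For the harder ``only if'' direction, I would invoke James' criterion: if $X$ is nonreflexive, then for every $\theta\in(0,1)$ there exist sequences $(x_n)\subset B_X$ and $(x_n^*)\subset B_{X^*}$ with $x_n^*(x_m)=\theta$ for $n\le m$ and $x_n^*(x_m)=0$ for $n>m$. Define $S:(\ell_1,\|\cdot\|_1)\to X$ by $S((a_n))=\sum_n a_n x_n$; absolute convergence and $\|x_n\|\le 1$ give $\|S\|\le 1$. Evaluating the functional $x_k^*$ at $Sa$ yields
\[
x_k^*(Sa)=\theta\sum_{n\ge k}a_n,
\]
so $\|Sa\|_X\ge\theta\sup_k\bigl|\sum_{n\ge k}a_n\bigr|$. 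A short triangle-inequality calculation, using the identity $\sum_{n\ge k}a_n=\sum_n a_n-\sum_{n<k}a_n$ together with $|\sum_n a_n|\le\|a\|_s$, shows that $\sup_k\bigl|\sum_{n\ge k}a_n\bigr|$ and $\|a\|_s$ differ by at most a factor of $2$, hence $\|Sa\|_X\ge(\theta/2)\|a\|_s$. This lower bound gives injectivity of $S$, permits me to define $T:S(\ell_1)\to(\ell_1,\|\cdot\|_s)$ by $T(Sa)=a$, and yields $\|T\|\le 2/\theta$ together with $TS=I$. Taking $\theta$ close to $1$, the product $\|T\|\cdot\|S\|$ is bounded by an absolute constant (in fact arbitrarily close to $2$), independent of $X$.

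The main obstacle is of course the reliance on James' theorem itself, which is the deep input and is precisely what the sequence of papers cited in the excerpt establishes; once it is in hand, defining $S$ and $T$ and verifying the bounds is routine. A minor subtlety is to confirm that the equivalence constant between the two flavors of summing norm (partial sums from the left vs.\ from the right) is a true absolute constant, not depending on the length of the sequence or on $X$; this is immediate from the identity displayed above and the triangle inequality.
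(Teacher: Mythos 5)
The paper does not actually prove this theorem; it cites it as the outcome of the sequence Pt\'ak--Singer--Pe\l czy\'nski--James--Milman--Milman, so there is no in-text proof to compare with. Your reconstruction of the ``only if'' direction is the standard one and is correct: starting from the Pt\'ak--James lemma (with $\theta$ arbitrary in $(0,1)$, which is James' refinement and is what gives a constant independent of $X$), the operator $S a=\sum_n a_n x_n$ has $\|S\|\le 1$, the identity $x_k^*(Sa)=\theta\sum_{n\ge k}a_n$ gives $\|Sa\|\ge\theta\sup_k\bigl|\sum_{n\ge k}a_n\bigr|\ge(\theta/2)\|a\|_s$, and hence $T=S^{-1}$ on $S(\ell_1)$ is well defined with $\|T\|\le 2/\theta$, so $\Pi$ can be taken close to $2$. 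This is exactly the direction the paper actually uses (via Corollary \ref{C:UsedBJS}), and your treatment of it is complete modulo the acknowledged deep input.

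In the ``if'' direction there is one step that does not work as literally stated: from $I=TS$ with $X$ reflexive you conclude that $I$ is weakly compact, invoking ``bounded $\circ$ weakly compact $=$ weakly compact.'' That composition rule is not directly applicable here, because $T$ is only defined on the (in general non-closed) subspace $S(\ell_1)$, while the weak limit points of $S(B_{\ell_1})$ need not lie in $S(\ell_1)$, and moreover $(\ell_1,\|\cdot\|_s)$ is incomplete, so ``relatively weakly compact image'' needs interpretation. The gap is routine to close: either extend $T$ by continuity to $\overline{S(\ell_1)}$ (a reflexive space) with values in the completion of $(\ell_1,\|\cdot\|_s)$, which is isometric to $c$ via partial summation, and then your summing-basis argument (coordinate functionals force weak limit $0$, the limit/sum functional forces value $1$) yields the contradiction via Eberlein--\v Smulian; or avoid completion altogether with Mazur's theorem: if $X$ were reflexive, some subsequence $Se_{n_k}$ would converge weakly, so suitable convex combinations with disjoint, successive supports would be norm-Cauchy in $X$, yet their images under $I$ are disjointly supported convex blocks whose pairwise $\|\cdot\|_s$-distance is at least $1$, contradicting the boundedness of $T$ on $S(\ell_1)$. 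With either repair your argument is correct; as written, this one composition step is the only genuine gap.
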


The following corollary is immediate:

\begin{corollary}\label{C:UsedBJS} If a metric space $M$ admits an embedding of distortion $D$ into
$\ell_1$, such that the distances induced by the $\ell_1$ norm and
the summing norm on the image of $M$ are $C$-equivalent, then $M$
admits an embedding into an arbitrary nonreflexive space with
distortion at most $D\cdot \Pi\cdot C$. If, in addition, $M$ is
finite, then the above assumption implies that for every $\ep>0$,
$M$ embeds into any non-superreflexive space with distortion at
most $D\cdot \Pi\cdot C+\ep$.
\end{corollary}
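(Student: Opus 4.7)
The plan is to show this is a direct consequence of the factorization theorem, using essentially the composition $g = S\circ f$.

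Let $f:M\to(\ell_1,\|\cdot\|_1)$ be the hypothesized embedding; after rescaling we may assume $d_M(x,y)\le \|f(x)-f(y)\|_1\le D\cdot d_M(x,y)$. Since $\|\cdot\|_s\le\|\cdot\|_1$ automatically, the $C$-equivalence on $f(M)-f(M)$ amounts to the inequality $\|f(x)-f(y)\|_1\le C\,\|f(x)-f(y)\|_s$. Now, given a nonreflexive Banach space $X$, apply the factorization theorem to obtain $S:(\ell_1,\|\cdot\|_1)\to X$ and $T:S(\ell_1)\to(\ell_1,\|\cdot\|_s)$ with $TS=I$ and $\|T\|\cdot\|S\|\le\Pi$. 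Define $g=S\circ f:M\to X$. For the upper bound, simply use operator-norm estimates:
\[
\|g(x)-g(y)\|_X\le\|S\|\cdot\|f(x)-f(y)\|_1\le\|S\|\cdot D\cdot d_M(x,y).
\]
For the lower bound, use that $T$ recovers the identity, so
\[
\|f(x)-f(y)\|_s=\|T(g(x)-g(y))\|_s\le\|T\|\cdot\|g(x)-g(y)\|_X,
\]
and then the $C$-equivalence plus the lower Lipschitz estimate on $f$ gives
\[
\|g(x)-g(y)\|_X\ge\frac{1}{\|T\|}\|f(x)-f(y)\|_s\ge\frac{1}{C\|T\|}\|f(x)-f(y)\|_1\ge\frac{1}{C\|T\|}d_M(x,y).
\]
Multiplying the two extremes bounds the distortion of $g$ by $D\cdot\|S\|\cdot\|T\|\cdot C\le D\cdot\Pi\cdot C$, settling the nonreflexive case.

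For the non-superreflexive case with $M$ finite, the point is to promote a nonreflexive target into $X$ itself via finite representability. By definition of non-superreflexivity, there is a nonreflexive space $Z$ which is finitely representable in $X$. Apply the first part of the corollary with $Z$ as target to obtain an embedding $g:M\to Z$ of distortion at most $D\cdot\Pi\cdot C$. The image $g(M)$ lies in a finite-dimensional subspace $Y\subset Z$, and by finite representability, for any $\delta>0$ there exists a subspace $Y'\subset X$ with $d_{BM}(Y,Y')\le 1+\delta$. Composing $g$ with the corresponding isomorphism multiplies the distortion by at most $1+\delta$, so choosing $\delta$ small enough makes the total distortion at most $D\cdot\Pi\cdot C+\e$, as claimed. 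There is no real obstacle here: everything is a matter of tracking constants through the factorization and then invoking the standard finite-representability transfer for the second assertion.
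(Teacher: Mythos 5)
Your argument is correct and is exactly the one the paper intends: the paper declares the corollary immediate from the factorization theorem, and your composition $g=S\circ f$ with the upper bound via $\|S\|$ and the lower bound via $TS=I$ and the $C$-equivalence of $\|\cdot\|_1$ and $\|\cdot\|_s$ on $f(M)$, followed by the standard finite-representability transfer (a nonreflexive $Z$ finitely representable in the non-superreflexive $X$, plus a $(1+\delta)$-isomorphism on the finite-dimensional span of the image) is precisely the intended routine verification. No gaps.
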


All known results on embeddings of families of finite metric
spaces into all non-superreflexive Banach spaces with uniformly
bounded distortions are based on Corollary~\ref{C:UsedBJS}. We
show that the set of all diamonds of all finite branchings does
not satisfy the assumption of Corollary~\ref{C:UsedBJS}, and thus
the method of \cite{Bou86,JS09} of constructing low-distortion
embeddings cannot be used to prove Theorem~\ref{T:Main}. To see
this, first observe that the assumption of
Corollary~\ref{C:UsedBJS} is equivalent (with modified constants)
to the assumption: there exists an embedding $f:M\to \ell_1$ such
that
\begin{equation}\lb{factor}
\forall u,v\in M\quad \|f(u)-f(v)\|_1\le d_M(u,v)< C\cdot \|f(u)-f(v)\|_s.
\end{equation}
We prove the following result (see
Section~\ref{S:NoMultBrXdelta}).

\begin{theorem} \label{no-diamonds-in-X-Delta}
For every $C> 1$ there exists $k(C)\in\bbN$ such that if for
some $k\in\mathbb{N}$ and every $n\in\mathbb{N}$ there exists an
embedding $f_n:D_{n,k}\to\ell_1$ satisfying
\begin{equation*}
\forall u,v\in D_{n,k}\quad \|f_n(u)-f_n(v)\|_1\le
d_{D_{n,k}}(u,v)< C\cdot \|f_n(u)-f_n(v)\|_s,
\end{equation*}
then $k\le k(C)$.
\end{theorem}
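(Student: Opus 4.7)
The approach is to argue by contradiction. Suppose that for some fixed $C>1$ there exist arbitrarily large values of $k$ for which embeddings $f_n:D_{n,k}\to\ell_1$ satisfying $(\ref{factor})$ exist for every $n$. The plan is to derive a quantitative bound on $k$ in terms of $C$ alone, thereby contradicting the assumption.

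The first step is to reformulate $(\ref{factor})$ by passing through the partial-sum isometry $\phi:(\ell_1,\|\cdot\|_s)\to c_0$ defined by $\phi(x)_N=\sum_{i\le N}x_i$. Writing $\Phi_N(u):=\sum_{i\le N}f_n(u)_i$, one obtains a family of $1$-Lipschitz functions on $D_{n,k}$ such that, for every pair $u,v$, the sequence $N\mapsto\Phi_N(u)-\Phi_N(v)$ has total variation (in $N$) at most $d(u,v)$, while $\sup_N|\Phi_N(u)-\Phi_N(v)|>d(u,v)/C$. Equivalently, $\phi\circ f_n$ is a bilipschitz embedding of $D_{n,k}$ into $c_0$ of distortion less than $C$, whose image lies in the ``bounded variation of differences'' subspace of $c_0$ cut out by the range of $\phi$.

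Next, I would exploit this structure at the top level of the diamond. Let $s,t$ denote the terminals of $D_{n,k}$ at distance $2^n$, and select $N^{*}$ with $|\Phi_{N^{*}}(t)-\Phi_{N^{*}}(s)|>2^n/C$. After normalizing, $\Phi_{N^{*}}(s)=0$ and $\Phi_{N^{*}}(t)>2^n/C$. Since $\Phi_{N^{*}}$ is $1$-Lipschitz and $d(s,m_i)=d(m_i,t)=2^{n-1}$ for each first-level midpoint $m_i$, the values $\Phi_{N^{*}}(m_i)$ are confined to a common interval of length less than $2^n(1-1/C)$. Yet the $k$ midpoints must be pairwise separated in summing norm by more than $2^n/C$; pigeonholing on their $\Phi_{N^{*}}$-values forces most pairs to be close under $\Phi_{N^{*}}$ and hence to be separated by \emph{other} coordinates $N_{ij}$, and the total-variation constraint then quantitatively limits how strongly these auxiliary coordinates can contribute on top of the budget already spent at $N^{*}$.

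Finally, the argument should be iterated through the recursive structure of $D_{n,k}$. The restriction of $f_n$ to each sub-diamond between $s$ and $m_i$ (and between $m_i$ and $t$) is again an embedding of $D_{n-1,k}$ satisfying $(\ref{factor})$ with the same $C$. Applying the above analysis at every level and propagating the witness-budget accounting, one expects to obtain a recursion showing that sustaining $k$ pairwise well-separated midpoints at every one of the $n$ levels requires a cumulative total-variation consumption that eventually exceeds the global budget $d(s,t)=2^n$ unless $k$ is controlled in terms of $C$. Letting $n\to\infty$ then yields the uniform bound $k\le k(C)$. I expect the main technical obstacle to be the combinatorial bookkeeping of the summing-norm witnesses: different midpoint pairs may use different coordinates $N_{ij}$, so the pigeonholing must be combined with a careful allocation of witness coordinates to the recursive diamond structure. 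The heart of the argument would be a quantitative ``cost of separation versus variation budget'' lemma which, iterated across the $n$ levels, pins down $k$ as a function of $C$ alone.
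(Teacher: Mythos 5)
There is a genuine gap. Your reformulation via partial sums $\Phi_N$ and your observation that each pair of midpoints needs a witness coordinate for its summing-norm separation are fine, but the proof stops exactly where the real work begins: the ``cost of separation versus variation budget'' lemma you defer is the entire content of the theorem, and the way you propose to use it does not close. The total-variation budget is \emph{per pair}: for each pair $u,v$ the sequence $N\mapsto\Phi_N(u)-\Phi_N(v)$ has variation at most $\|f_n(u)-f_n(v)\|_1\le d(u,v)$, but there is no single global resource shared across the $\binom{k}{2}$ pairs or across the $n$ levels of the recursion, so ``cumulative total-variation consumption eventually exceeds the global budget $d(s,t)=2^n$'' is not a meaningful constraint as stated. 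A pigeonhole at one coordinate $N^*$ plus a per-pair budget of $2^n$ gives no contradiction for a single pair (the cost $\approx 2^n/(2C)$ is far below $2^n$), and nothing in your sketch explains how costs incurred by different pairs, or by sub-diamonds at different levels, add up against a common quantity.

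The paper resolves this with two ingredients absent from your plan. First, a self-improvement step (Lemma~\ref{stretch}): letting $n\to\infty$ is used not to iterate a recursion but to make the $k$ first-level midpoints \emph{almost exact} metric midpoints in $\ell_1$; after the normalization of Lemma~\ref{reduction} this yields vectors $z_1,\dots,z_k$ supported on $\{1,\dots,N\}$ with the coordinatewise bound $|z_{im}-z_{jm}|\le 1$ and pairwise summing-norm separation $\ge\alpha N$, $\alpha=\tfrac1{2C^2}$. This coordinatewise control (which fails for a mere distortion-$C$ configuration, hence the need for the self-improvement) is what makes the witness indices $r(i,j)$ quantitatively rigid: partial sums of $z_i-z_j$ move by at most $1$ per step. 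Second, a combinatorial mechanism converting pairwise witnesses into a bound on $k$: Lemma~\ref{L:Lemma6} shows that for any triple $i,j,l$ two of the indices $r(i,j),r(i,l),r(j,l)$ must differ by at least $(\alpha N-1)/2$, and then a $3$-coloring of triples together with Ramsey's theorem (Lemma~\ref{bound-on-branches}) forces, on a monochromatic subset, a chain of witness indices growing linearly (or logarithmically) in the subset size, which must stay below $N$; this is what pins down $k\le k(C)$. Your proposal contains neither the exact-midpoint upgrade nor any substitute for this Ramsey-type bookkeeping, so as written it is a plausible program rather than a proof, and its central accounting step would fail without new ideas.
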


\begin{remark}
We note that Theorem~\ref{no-diamonds-in-X-Delta} does not exclude the possibility that $\forall k\in\bbN\ \exists C=C(k)>1$ so that for all $n\in\bbN$ there exists an embedding from $D_{n,k}$ into $\ell_1$ that satisfies condition  \eqref{factor}. Theorem~\ref{no-diamonds-in-X-Delta} only implies that if such numbers $C(k)$ exist for all $k\in\bbN$ then they would not be uniformly bounded.

We do not know whether such numbers $C(k)$ exist for all $k\in\bbN$. Johnson and Schechtman \cite{JS09} proved that $C(2)$ exists, but we don't even know whether $C(3)$ exists.
\end{remark}

From another perspective, the results of the present paper can be
viewed as a step in a generalization of results on existence of
low-distortion embeddings of finite metric spaces into $\ell_1$,
to existence of such embeddings into any non-superreflexive Banach
space. Starting with seminal works \cite{LLR95,AR98,GNRS04}, due
to their numerous important applications, the study of
low-distortion metric embeddings has become a very active area of
research also in theoretical computer science, for more
information we refer the reader to the books
\cite{DL97,Mat02,WS11}, the surveys \cite{Ind01,Lin02}, and the
list of open problems \cite{MatN-open-list} that has been very
important   in the development of the subject.

Here we just want to mention the following, still open, well-known conjecture.

\begin{conjecture}[Planar Conjecture]\label{C:planar} Any metric supported on a (finite) planar graph (that is a shortest-path metric on any planar graph whose edges have arbitrary weights)
can be embedded into $\ell_1$ with
constant distortion.
\end{conjecture}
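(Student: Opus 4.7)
The plan is to attempt the conjecture through the decomposition-and-gluing strategy that has become the standard framework for planar metric embeddings, and to identify where one would have to break the current barrier. After reducing to connected edge-weighted planar graphs (the conjecture is invariant under scaling of weights and under edge-subdivisions), the first step is to construct, for each dyadic scale $2^i$ between the minimum and maximum pairwise distance, a random partition of the vertex set into clusters of diameter $O(2^i)$ that is padded in the sense of Klein--Plotkin--Rao: each vertex lies at distance $\Omega(2^i)$ from the boundary of its cluster with constant probability. Such padded decompositions are known to exist for every $K_h$-minor-free family, and in particular for planar graphs.

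On each cluster one would then construct a local embedding into $\ell_1$ using the finer structure of planar metrics at small scales. A natural route is to decompose the cluster further via the planar cycle-separator theorem into pieces that are essentially outerplanar, and to apply the known $O(1)$-distortion embedding of outerplanar metrics into $\ell_1$; or, dually, to produce via the Okamura--Seymour theorem a family of cuts certifying an $O(1)$ flow-cut gap within each cluster. The local embeddings at scale $2^i$ would be weighted by $2^i$ and summed across all scales to form a single $\ell_1$ embedding of the whole graph. Non-expansion is built into the construction, while the contraction (lower) bound follows because any pair of vertices at distance $d$ receives a contribution of order $d$ from the scales $2^i \sim d$, via the padded-decomposition guarantee.

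The main obstacle is precisely the one that has blocked every previous attack on this conjecture: the straightforward sum across scales introduces a multiplicative factor equal to the number of active scales, which is $\Theta(\log \diam)$ and in the worst case $\Theta(\log n)$. Rao's measured-descent argument shaves this to $O(\sqrt{\log n})$, and this remains the best known bound for general planar graphs. To reach $O(1)$ one would need either an embedding whose errors across different scales cancel rather than accumulate, or a genuinely new structural theorem about planar metrics --- for instance a single globally consistent family of cuts realizing an $O(1)$ flow-cut gap, which is exactly the Gupta--Newman--Rabinovich--Sinclair reformulation of the conjecture via planar multicommodity flow duality, or a planar analogue of Bartal's stochastic tree embedding with an absolute constant. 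I do not see a route to perform this cross-scale cancellation or to produce such a flow-cut family with present tools; this is the central obstruction, and is why the conjecture has remained open despite sustained effort and its substantial algorithmic consequences.
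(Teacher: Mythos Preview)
The paper does not prove this statement. The Planar Conjecture is introduced in the paper purely as background and motivation; the paper explicitly says that this conjecture (together with the Forbidden-minor embedding conjecture) ``remain[s] open despite very active work''. There is therefore no proof in the paper to compare your attempt against.

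Your proposal is, in fact, not a proof either, and you say so yourself: you correctly isolate the standard padded-decomposition / measured-descent framework and correctly identify the known barrier --- the accumulation of a $\Theta(\log n)$ (or $\Theta(\sqrt{\log n})$ after Rao) factor across dyadic scales, equivalently the absence of a planar $O(1)$ flow--cut gap in the GNRS sense. That diagnosis is accurate and matches the state of the art, but it is an explanation of why the problem is open, not a proof of the conjecture. Since the statement is a conjecture and the paper offers no proof, there is nothing further to assess here.
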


Gupta, Newman, Rabinovich, and Sinclair say that the Planar Conjecture was a motivation for their work \cite{GNRS04}. Recall that it is well-known that planar graphs are characterized by the condition that they do not contain the complete graph  $K_5$ nor the complete bipartite graph $K_{3,3}$ as a minor ($H$ is a minor of $G$ if it can be obtained from
$G$ via a sequence of edge contractions, edge deletions, and
vertex deletions; note that all graphs are considered with arbitrarily assigned weights on  edges);
we refer to \cite{Die00} for graph theory terminology and background.

As a step towards a solution of the Planar Conjecture, Gupta, Newman, Rabinovich, and  Sinclair \cite{GNRS04} proved that   all (finite) graphs that do not contain the complete graph $K_4$ as a minor can be embedded into $\ell_1$ with
 distortion at most $14$. The graphs excluding $K_4$ as a minor are also known as {\it series-parallel graphs}.
   Recall, that the graph $G=(V,E)$ is called
 {\it  series-parallel   with terminals $s, t\in V$} if $G$ is either a single
edge $(s, t)$, or $G$ is a series combination or a parallel combination of two series parallel
graphs $G_1$ and $G_2$ with terminals $s_1, t_1$ and $s_2, t_2$. The {\it series combination}
of  $G_1$ and $G_2$  is formed by setting $s=s_1, t=t_2$ and identifying $s_2=t_1$;
the {\it parallel combination} is formed by identifying $s=s_1=s_2$, $t=t_1=t_2$.

Gupta, Newman, Rabinovich, and  Sinclair \cite[p.~235]{GNRS04} formulated the following generalization of the Planar Conjecture.

\begin{conjecture}[Forbidden-minor embedding conjecture]\label{C:GNRS}
For any finite set $L$ of graphs, there exists a constant $C_L<\infty$ so that every metric on any graph that does not contain any member of the set $L$ as a minor can be embedded into $\ell_1$ with
 distortion at most $C_L$.
\end{conjecture}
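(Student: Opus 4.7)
The plan is to attack Conjecture~\ref{C:GNRS} via the Robertson--Seymour graph minor structure theorem, which asserts that for any fixed graph $H$, every $H$-minor-free graph admits a tree decomposition whose torsos are $k$-almost-embeddable in a surface of bounded Euler genus, where $k$ depends only on $|V(H)|$. The high-level strategy is to reduce the conjecture to two independent sub-problems: (i) a \emph{base case}, asserting that every edge-weighted graph of bounded Euler genus embeds into $\ell_1$ with distortion depending only on the genus, and (ii) a \emph{clique-sum composition principle}, asserting that if two edge-weighted graphs $G_1,G_2$ each embed into $\ell_1$ with distortion at most $D$, then any $r$-clique-sum $G$ of $G_1$ and $G_2$ also embeds into $\ell_1$ with distortion at most some $F(D,r)$, where $r$ is bounded in terms of $|V(H)|$.

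If both (i) and (ii) are established, then Conjecture~\ref{C:GNRS} follows by iterating the clique-sum composition along the tree decomposition provided by Robertson--Seymour, and by handling the ``apex'' vertices and the bounded-depth ``vortices'' by direct inclusion at a bounded multiplicative overhead. The Gupta--Newman--Rabinovich--Sinclair paper already establishes (ii) in the special case of $3$-clique-sums (together with (i) in the trivial case of a single edge), which suffices for their $K_4$-minor-free result with distortion $14$; my plan for extending (ii) to arbitrary $r$ is to use their probabilistic-partition/gluing scheme across the separator clique, combined with a Lipschitz-extension step for $\ell_1$-valued maps on finite subsets, making use of the fact that embeddings of a fixed clique into $\ell_1$ have a universal bounded-distortion realization.

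The core obstacle is (i): even the case of genus zero is exactly the Planar Conjecture (Conjecture~\ref{C:planar}) and is wide open. My plan to attack (i) is to combine Klein--Plotkin--Rao style random padded decompositions for minor-closed families, which give padded partitions at every scale, with a multicommodity-flow/LP-duality argument aimed at showing that planar multiflows satisfy a constant-ratio max-flow/min-cut theorem for \emph{arbitrary} demands (not only for the all-pairs or the Okamura--Seymour setting already understood). Such a constant flow-cut gap would, by the standard Bourgain/Linial--London--Rabinovich duality between $\ell_1$-embeddability and flow-cut gaps, translate directly into a constant-distortion $\ell_1$ embedding of every planar metric, thus yielding (i) for genus zero. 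Upgrading from genus zero to bounded genus would then proceed by cutting the surface along a short non-separating cycle to reduce the genus, identifying the resulting paired vertices, and reapplying the clique-sum principle (ii).

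The hard part will unmistakably be (i), and specifically the planar base case: it is a central open problem in the area, and there is no known approach that avoids confronting the flow-cut gap for planar graphs directly. All known positive results --- series-parallel, outerplanar, bounded pathwidth, $O(\sqrt{\log n})$ for general planar --- either restrict the topology further or give only sub-constant improvements over the $O(\log n)$ bound from Bourgain's embedding, so my proposal should be viewed as a conditional reduction of the full forbidden-minor conjecture to the planar flow-cut gap problem, rather than as a complete proof.
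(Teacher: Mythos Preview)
The statement you are addressing is a \emph{conjecture}, not a theorem: the paper states Conjecture~\ref{C:GNRS} as an open problem (``Conjectures~\ref{C:planar} and \ref{C:GNRS} remain open despite very active work on them'') and gives no proof. There is therefore nothing in the paper to compare your argument against. What you have written is, as you yourself acknowledge in your final paragraph, a research programme rather than a proof, and it reduces one open problem to another that is equally open.

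Even taken as a programme, there is a genuine gap you are understating. You present the clique-sum composition principle (ii) as essentially technical, to be handled by ``their probabilistic-partition/gluing scheme \dots\ combined with a Lipschitz-extension step.'' In fact, stability of constant-distortion $\ell_1$-embeddability under bounded-size clique-sums is itself a well-known open problem (already the case of $2$-sums was singled out as a conjecture and is the subject of \cite{LP13}); GNRS did not establish (ii) for $3$-clique-sums in the generality you need. So your reduction leaves \emph{two} substantial open problems standing, not one: the planar base case and the clique-sum closure property. Handling apices and vortices ``at a bounded multiplicative overhead'' is also not a matter of direct inclusion --- vortices are attached along faces of unbounded length, and no constant-distortion $\ell_1$-embedding theorem is known for graphs with even a single vortex.

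In short: this is not a proof attempt that can be compared to the paper, because the paper contains no proof; and as a roadmap it correctly identifies the Robertson--Seymour decomposition as the natural structural tool, but it mislabels several open sub-problems as routine.
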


Conjectures~\ref{C:planar} and \ref{C:GNRS} remain open despite very active work on them, cf. e.g. \cite{CJLV08,LS09,LM10,LR10,LP13,LS13,Sid13,CCN15}  and their references.

Chakrabarti,  Jaffe,   Lee, and  Vincent \cite{CJLV08} improved
the upper bound obtained in \cite{GNRS04} by proving that every
series parallel graph can be embedded into $\ell_1$ with
distortion at most 2. Lee and Raghavendra \cite{LR10} proved that
2 is best possible  - it is the supremum of $\ell_1$-distortions
of the family of all multibranching diamonds $D_{n,k}$, for all
$n,k\in\bbN$, with uniform weights on all edges, that is, the same
family of graphs that we study in Theorem~\ref{T:Main}.

Several methods of constructing low-distortion embeddings of finite metric spaces into $\ell_1$ are now available. However these methods rely on special geometric properties of $\ell_1$, and it is not known whether there exist methods applicable in other classes of Banach spaces. In particular, Johnson and
Schechtman \cite[Remark 6]{JS09} suggested
  the following problem.

\begin{problem}\label{P:SerParSR} Let $X$ be any non-superreflexive Banach space. Is it true
that all series-parallel graphs admit  bilipschitz
embeddings into $X$ with uniformly bounded distortions?
\end{problem}

Theorems~\ref{T:Main} and \ref{T:Laakso} can be seen as a step towards a solution of Problem~\ref{P:SerParSR}.
\medskip

We suggest the following analogue of Conjecture~\ref{C:GNRS}.
\begin{problem} Do there exist a non-superreflexive  Banach space $X$ and a  finite graph $G$ such that the family of all finite graphs which exclude $G$ as a minor   is not embeddable into $X$  with
uniformly bounded distortions?
\end{problem}

To
the best of our knowledge this problem is   open.

\section{Preliminaries}

Throughout the paper we try to use standard terminology and
notation. We refer to \cite{Die00} for graph theoretical
terminology and to \cite{Ost13} for terminology of the theory of
metric embeddings.

In this section we recall the results of Brunel and Sucheston
about equal-signs-additive (ESA) sequences, that we will use in an
essential way. In the second part of this section we describe the
notation that we will use for vertices of multi-branching
diamonds.

\subsection{Equal signs
additive (ESA) sequences}\label{S:BS}

Our main construction relies on the following notions that were
introduced by Brunel and Sucheston in their deep study of
superreflexivity.

\begin{definition}[{\cite[p.~83--84]{BS75}, \cite[p.~287-288]{BS76}}]\label{D:ESAetc}
Let $\{e_i\}_{i=1}^\infty$ be a sequence in a normed space
$(X,\|\cdot\|)$.
\medskip

{\bf (1)} The norm $\|\cdot\|$ is called {\it
equal-signs-additive} (ESA) on  $\{e_i\}_{i=1}^\infty$ if for any
finitely non-zero sequence $\{a_i\}$ of real numbers such that
$a_ka_{k+1}\ge 0$, we have
\begin{equation*}\tag{ESA}
\left\|\sum_{i=1}^{k-1}a_ie_i+(a_k+a_{k+1})e_k+\sum_{i=k+2}^\infty
a_ie_i\right\|=\left\|\sum_{i=1}^{\infty}a_ie_i\right\|.
\end{equation*}

\medskip

{\bf (2)} The norm $\|\cdot\|$ is called {\it subadditive} (SA) on
$\{e_i\}_{i=1}^\infty$ if for any finitely non-zero sequence
$\{a_i\}$ of real numbers, we have
\begin{equation*}\label{E:SA}\tag{SA}
\left\|\sum_{i=1}^{k-1}a_ie_i+(a_k+a_{k+1})e_k+\sum_{i=k+2}^\infty
a_ie_i\right\|\le\left\|\sum_{i=1}^{\infty}a_ie_i\right\|.
\end{equation*}
\medskip

{\bf (3)} The norm $\|\cdot\|$ is called {\it invariant under
spreading} (IS) on $\{e_i\}_{i=1}^\infty$ if for any finitely
non-zero sequence $\{a_i\}$ of real numbers, and for any
(increasing) subsequence $\{k_i\}_{i=1}^\infty$ in $\mathbb{N}$,
we have
\begin{equation*}\label{E:IS}\tag{IS}
\left\|\sum_{i=1}^{\infty}a_ie_i\right\|=\left\|\sum_{i=1}^{\infty}a_ie_{k_i}\right\|.
\end{equation*}
\medskip
\end{definition}

If the norm is understood, we will simply say that the  sequence
$\{e_i\}_{i=1}^\infty$ is ESA, SA, or IS, respectively.

Brunel and Sucheston proved  the following relationships between
the above notions.

\begin{lemma}[{\cite[Lemma~1]{BS76}}]\lb{lemmaESA}
A sequence is \ESA\ \wtw\ it is  SA, and that every ESA sequence is also IS .
\end{lemma}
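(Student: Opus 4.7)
The lemma consists of three implications: (a) ESA $\Rightarrow$ IS, (b) ESA $\Rightarrow$ SA, and (c) SA $\Rightarrow$ ESA. My plan is to prove (a) first by a clean direct argument, then use it as a tool for (b), and finally to prove (c) by a self-contained SA-manipulation.

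For (a), the central observation is that inserting a single zero into the coefficient sequence at any position preserves the norm. Given a finitely supported $(a_i)$ and a position $j$, form $\hat a = (a_1, \ldots, a_{j-1}, 0, a_j, a_{j+1}, \ldots)$. Since $\hat a_j \hat a_{j+1} = 0 \ge 0$, ESA applies at position $j$ and yields the equality of $\|\sum_i \hat a_i e_i\|$ with the norm of the sequence obtained by moving the zero one step to the right (to position $j+1$). Iterating, the inserted zero can be pushed past the support of $(a_i)$, where it is indistinguishable from a trailing zero. Hence zero-insertion preserves the norm, and finitely many such insertions yield IS for an arbitrary increasing $(k_i)$.

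For (b), the same-sign case is immediate from the ESA equality. For the opposite-sign case, assume WLOG $a_k = \alpha > 0$ and $a_{k+1} = -\beta < 0$ with $\alpha \ge \beta$. The plan is to exhibit, using IS from (a) to reindex freely, an auxiliary vector $\tilde v = \sum_{i<k} a_i e_i + (\alpha-\beta) e_k + \beta e_{k+1} - \beta e_{k+2} + \sum_{i>k+1} a_i e_{i+1}$. An ESA-merge at position $k$ in $\tilde v$ (combining the same-sign pair $(\alpha-\beta, \beta)$) followed by IS-compaction reduces $\tilde v$ to a vector of norm $\|v\|$, showing $\|\tilde v\| = \|v\|$. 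Relating $\tilde v$ and $v'$ via a further IS-reindexing and the triangle inequality applied to the decomposition $\tilde v = v' + \beta(e_{k+1} - e_{k+2})$ then yields $\|v'\| \le \|v\|$.

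For (c), SA directly provides $\|v'\| \le \|v\|$ in the same-sign case. The reverse inequality $\|v\| \le \|v'\|$ is the subtle point: without prior access to IS (which we have deduced only from ESA), we rely on the one-sided monotonicity of SA-merges. The construction is to insert a zero between positions $k$ and $k+1$, producing an extended vector $w$, and apply SA in two distinct merge orderings (merge-then-shift versus shift-then-merge), both of which produce vectors whose norms chain monotonically to $\|v\|$ and $\|v'\|$ respectively. A symmetric splitting of $a_k + a_{k+1}$ across the inserted zero forces both chains to terminate at the same intermediate norm, so $\|v\| = \|v'\|$.

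The principal obstacle is part (c), since all of our IS-based machinery is off-limits in that direction. The crux is identifying the single intermediate vector whose SA-reductions isometrically realize both $v$ and $v'$; without this, one can only show the one-sided bound that SA directly provides. Once the right construction is in hand, the argument becomes bookkeeping on the chain of SA-merges, but finding the construction is where the real work lies.
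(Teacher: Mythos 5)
The paper contains no proof of this lemma (it is quoted from \cite[Lemma~1]{BS76}), so your attempt can only be measured against the Brunel--Sucheston argument and against correctness. Part (a), ESA implies IS via norm-preserving insertion of zeros, is correct. In part (b) the auxiliary vector $\tilde v$ and the identity $\|\tilde v\|=\|v\|$ are right, but the closing step fails: from the decomposition $\tilde v = v'' + \beta(e_{k+1}-e_{k+2})$ (with $v''$ an IS-copy of the merged vector $v'$) the triangle inequality only gives $\|v'\|\le\|\tilde v\|+\beta\|e_1-e_2\|$, not $\|v'\|\le\|\tilde v\|$; and the inequality you still need --- that deleting an adjacent cancelling pair $+\beta,-\beta$ does not increase the norm --- is itself an instance of SA, so at the crucial point the argument is circular. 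The missing idea is an averaging/telescoping step: using IS, place the cancelling pair in each of $m$ consecutive slots $f_1,\dots,f_{m+1}$ of a long gap; every placement $Y_l=X+\beta(f_l-f_{l+1})$ (where $X$ is the vector with the pair deleted) is a spreading of $\tilde v$ and hence has norm exactly $\|v\|$, while $\sum_{l=1}^m Y_l=mX+\beta(f_1-f_{m+1})$ and $\|f_1-f_{m+1}\|=\|e_1-e_2\|$ by IS; this yields $\|X\|\le\|v\|+\beta\|e_1-e_2\|/m$, and letting $m\to\infty$ gives $\|v'\|=\|X\|\le\|v\|$.

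Part (c) cannot be carried out as you plan: with the definitions as stated, SA alone does not imply ESA, so the ``self-contained SA-manipulation'' you are looking for does not exist. For a counterexample take the weighted $\ell_1$-norm $\|\sum_i a_ie_i\|=\sum_i w_i|a_i|$ with strictly increasing bounded weights $w_i$: merging at $k$ changes the norm by $w_k|a_k+a_{k+1}|-w_k|a_k|-w_{k+1}|a_{k+1}|\le (w_k-w_{k+1})|a_{k+1}|\le 0$, so this basis is SA, yet it is neither ESA nor IS (take $a_k=a_{k+1}=1$). This also explains why your two merge chains cannot close: every application of SA produces an inequality in the same direction (merged $\le$ unmerged), so no ordering of merges and zero-insertions can force the reverse bound $\|v\|\le\|v'\|$. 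The implication that is true, and the one proved in \cite{BS76} (where the sequence is invariant under spreading by construction), is that SA together with IS implies ESA: for a same-sign pair with $c=a_k+a_{k+1}>0$ put $t=a_k/c$ and write $v=tA+(1-t)B$, where $A$ is the merged vector and $B$ its spreading with the coefficient $c$ moved to position $k+1$; IS gives $\|A\|=\|B\|$, convexity gives $\|v\|\le\|A\|$, and SA gives the opposite inequality. So the converse direction should be proved under the additional (and necessary) hypothesis IS, not from SA alone.
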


 Moreover Brunel and Sucheston    discovered the following deep result.

\begin{theorem}[\cite{BS75}]\label{T:BS}  For each nonreflexive space $X$ there exists a Banach space $E$ with
 an \ESA\ basis that is finitely
representable in $X$.
\end{theorem}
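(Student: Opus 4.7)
The plan is to construct $E$ as a Brunel-Sucheston spreading model of a suitable sequence in $X$. By Eberlein-\v{S}mulian, nonreflexivity of $X$ yields a bounded sequence $(x_n)\subset X$ with no weakly convergent subsequence, and Rosenthal's $\ell_1$-theorem splits this into two cases after passing to a subsequence: either (A) $(x_n)$ is equivalent to the $\ell_1$-unit-vector basis, or (B) $(x_n)$ is weakly Cauchy but not weakly convergent in $X$. Case (A) finishes immediately with $E=\ell_1$: the $\ell_1$-basis is $1$-unconditional, and the identity $|a_k+a_{k+1}|=|a_k|+|a_{k+1}|$ for same-sign consecutive coefficients upgrades unconditionality to ESA.

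For case (B) I would strengthen the hypothesis using James' characterization of nonreflexivity, choosing $(x_n)\subset B_X$ and $(f_n)\subset B_{X^*}$ with $f_n(x_m)=\theta\cdot\mathbf{1}_{[n\le m]}$ for some fixed $\theta\in(0,1)$. I would then apply the standard spreading-model extraction: by a Ramsey/diagonal argument pass to a subsequence $(y_n)$ along which
\[
\left\|\sum_{i=1}^k a_i e_i\right\|_E \;:=\; \lim_{n_1<\cdots<n_k\to\infty}\left\|\sum_{i=1}^k a_i y_{n_i}\right\|_X
\]
exists for every finite rational coefficient tuple, and extend by continuity to a norm on $c_{00}$. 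The completion $E$ carries a canonical basis $(e_i)$ that is IS by construction. Finite representability of $E$ in $X$ is then immediate: for any $N$ and $\varepsilon>0$, taking $n_1<\cdots<n_N$ sufficiently large produces a $(1+\varepsilon)$-isomorphism of $\Span(e_1,\ldots,e_N)$ onto a subspace of $X$.

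The crucial step is to show that $(e_i)$ is ESA (equivalently SA, by Lemma~\ref{lemmaESA}). The biorthogonality passes to the spreading limit, producing functionals $e_n^*\in E^*$ with $e_n^*(e_m)=\theta\cdot\mathbf{1}_{[n\le m]}$, so $e_n^*\bigl(\sum_i a_i e_i\bigr)=\theta S_n$ where $S_j:=\sum_{i\ge j}a_i$ are the tail sums. Merging two same-sign consecutive coefficients $a_k,a_{k+1}$ deletes $S_{k+1}$ from the list of tail sums while leaving every other $S_j$ unchanged; the hypothesis $a_ka_{k+1}\ge 0$ forces $S_{k+1}\in[\min(S_k,S_{k+2}),\max(S_k,S_{k+2})]$, so $|S_{k+1}|\le\max(|S_k|,|S_{k+2}|)$ and its deletion cannot affect any symmetric, tail-sum-determined norm. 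Combined with IS this yields the (ESA) equality.

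The main obstacle is that the James biorthogonality only supplies the lower bound $\|\cdot\|_E\ge\theta\sup_n|S_n|$, not that the spreading-model norm is genuinely a function of the tail sums alone. Closing this gap --- by refining the extraction further so that the limit norm depends only on the $S_j$'s --- is the technical heart of the argument, and is where one exploits most fully the absence of weak convergence of $(x_n)$ in $X$. Once this refinement is carried out, the tail-sum sandwiching above completes the proof.
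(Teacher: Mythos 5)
Your first half—extracting the Pt\'ak/James flat biorthogonal system $f_n(x_k)=\theta\cdot\mathbf{1}_{[n\le k]}$, building the spreading model, noting that its basis is IS and that it is finitely representable in $X$—is exactly the route the paper takes (and your Rosenthal case split is unnecessary; moreover in your case (A) an isomorphic copy of $\ell_1$ only gives finite representability of $\ell_1$ via James' non-distortion theorem, which you do not invoke). The genuine gap is precisely the step you flag yourself, and it cannot be closed the way you propose. First, once the limit defining $\|\cdot\|_E$ exists for all coefficient tuples, passing to further subsequences of $(y_n)$ leaves $\|\cdot\|_E$ unchanged, so ``refining the extraction'' is a no-op: it cannot make the norm a symmetric function of the tail sums. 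Second, the hoped-for conclusion is simply false for spreading models of James sequences: in $X=c_0\oplus_\infty c_0$ take $x_k=\bigl(\sum_{l\le k}e_l,\;e_k\bigr)$ and $f_n=(e_n^*,0)$, so that $f_n(x_k)=\mathbf{1}_{[n\le k]}$; for every subsequence the spreading-model norm of $\sum_i a_ie_i$ equals $\max\bigl(\sup_j|S_j|,\ \sup_i|a_i|\bigr)$, and with $(a_1,a_2,a_3)=(1,1,-\tfrac32)$ merging the same-sign pair $a_1,a_2$ raises this value from $\tfrac32$ to $2$, so (ESA) (equivalently (SA), by Lemma~\ref{lemmaESA}) fails for the spreading-model basis itself.

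This is why the paper does not stop at the spreading model $\widetilde X$: after obtaining the IS basis of $\widetilde X$, it invokes the second Brunel--Sucheston construction \cite[Proposition~2.2 and Lemma~2.1]{BS75}, which is not a subsequence refinement but a further limiting procedure applied to $\widetilde X$, producing a genuinely new space $E$ that is finitely representable in $\widetilde X$ (hence in $X$) and whose natural spanning sequence is ESA; that this sequence is in fact a basis is verified in \cite[Proposition~1]{BS76}. To repair your argument you need this additional construction on top of $\widetilde X$; the tail-sum sandwiching observation alone, while correct for norms that are symmetric functions of the tail sums, does not apply to the spreading-model norm you actually have.
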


Since this theorem is not explicitly stated in \cite{BS75} (and in
\cite[Lemma 11.33]{Pis16} the statement is slightly different), we
describe how to get it from the argument presented there.

 By
\cite{Pta59} (see also \cite{Jam64,MM65,Pel62,Sin62}), since $X$ is not reflexive, there exist: a sequence $\{x_i\}_{i=1}^\infty$ in $B_X$
(the unit ball of $X$), a number $0<\theta<1$, and a sequence of functionals
$\{f_i\}_{i=1}^\infty\subset B_{X^*}$, so that
\[f_n(x_k)=\begin{cases} \theta &\hbox{ if }n\le k\\
0 &\hbox{ if }n>k.
\end{cases}\]
Following \cite[Proposition 1]{BS74} we build on the sequence
$\{x_i\}$ the spreading model $\widetilde X$ (the term {\it
spreading model} was not used in \cite{BS74}, it was introduced
later, see \cite[p.~359]{Bea79}). The natural basis
$\{e_i\}_{i=1}^\infty$ in $\widetilde X$ is IS. The space
$\widetilde X$ is finitely representable in $X$, see
\cite[p.~83]{BS75}. Now one can use the procedure described in
\cite[Proposition~2.2 and Lemma~2.1]{BS75}, and obtain a Banach
space $E$ which is finitely representable in $\widetilde X$ and
has an \ESA\ basis. (Actually, the fact that we get a basis was
not verified in \cite{BS75}, this was done in
\cite[Proposition~1]{BS76}).

\subsection{Labelling of the vertices
of the diamond   $D_{n,k}$.}\label{S:DescrD}

Recall that we stated the formal definition of multi-branching diamond graphs (diamonds)  $D_{n,k}$ in the Introduction (Definition~\ref{D:BranchDiam}).
In this section we describe a system of labels for their vertices that we will use in the proof of Theorem~\ref{T:Main}.

First note, that there are two standard normalizations for the
shortest-path metric on diamonds: in one of them each edge has
length $1$, and in the other each edge of $D_{n,k}$ has length
(weight) $2^{-n}$, so that the distance between the top and and
the bottom vertex is equal to $1$. We shall use the $2^{-n}$
normalization of diamond graphs. Observe that in this
normalization the natural embedding of $D_{n,k}$ into $D_{n+1,k}$
is isometric.

We will call one of the vertices of $D_{0,k}$  the {\it bottom} and the
other the {\it top}. We define the  {\it
bottom}  and the {\it top} of $D_{n,k}$ as vertices which evolved from
the bottom and the top of $D_{0,k}$, respectively. A {\it subdiamond} of
$D_{n,k}$ is a subgraph which evolved from an edge of some
$D_{m,k}$ for $0\le m\le n$. The {\it top} and {\it bottom of a
subdiamond} of $D_{n,k}$  are defined as the vertices of the
subdiamond which are the closest   to the top and bottom of
$D_{n,k}$, respectively. The {\it height of the subdiamond} is the
distance between its top and its bottom.

We will say that a  vertex of $D_{n,k}$  is at the {\it level}
$\la$, if its distance from the bottom vertex is equal to $\la$.
Then $B_n\DEF\{\frac{t}{2^n} : 0\le t \le 2^n\}$ is the set of all
possible levels. For each $\lambda\in B_n$ we consider its dyadic
expansion
\begin{equation}\lb{dyadic}
\la=\sum_{\al=0}^{s(\la)} \frac{\la_\al}{2^\al},
\end{equation}
where $0\le s(\la)\le n$, $\la_\al\in\{0,1\}$ for each
$\al\in\{0,\dots,s(\la)-1\}$, and
$\la_{s(\lambda)}=1$ for all $\lambda\ne 0$. We will use the convention
$s(0)=0$. Note that $1\in B_n$ is the only value of $\la\in B_n$
with $\la_0\ne 0$.

We will label each vertex of the diamond $D_{n,k}$  by its level
$\la$, and by an ordered $s(\la)$-tuple of numbers from the set
$\{1,\dots,k\}$. We will refer to this  $s(\la)$-tuple of numbers
as  the  {\it  label of the branch of the vertex}. We define
labels inductively on the value of $s(\la)$ of the level  $\la$ of
vertices,  as follows, cf. Figure~\ref{F:diamond-label}:
\begin{itemize}

\item $s(\la)=0$: The bottom vertex is labelled $v^{(n)}_0$, and the top
vertex is labelled $v^{(n)}_1$.

\item   $s(\la)=1$: There are $k$ vertices at the level $\frac12$, and they are
labelled by $v^{(n)}_{\frac12,j_1}$, where $j_1\in \{1,\dots,k\}$ is the label of the path in $D_{1,k}$ (see Definition~\ref{D:BranchDiam}) to which the vertex $v^{(n)}_{\frac12,j_1}$
belongs.

\item $s(\la)=l+1$, where $1\le l<n$: Suppose that for
all $\mu\in B_n$ with $s(\mu)\le l$, all vertices at level $\mu$
have been labelled, and
let $\la\in B_n$ be such that $s(\la)=l+1$.

Then $\la_{l+1}=1$, and  there exist unique values $\kappa,\mu\in
B_n$ with $s(\kappa)<s(\mu)= l$, and $\e\in\{1,-1\}$ so that
\begin{equation*}\lb{level-lb}
\la=\kappa + \e\frac{1}{2^{l+1}}=\mu-\e\frac{1}{2^{l+1}}.
\end{equation*}

If a vertex $v$ of the diamond $D_{n,k}$ is at the level $\la$, then
there exist a unique vertex $u$ at the level $\kappa$, and a
unique vertex $w$ at the level $\mu$, so that
$d(u,w)=\frac{1}{2^l}$ and
\begin{equation}\lb{level-dist}
d(v,u)=d(v,w)=\frac{1}{2^{l+1}}.
\end{equation}
Note also that if the vertices $u$ at the level $\kappa$, and $w$
at the level $\mu$, are such that $d(u,w)=\frac{1}{2^l}$, then
there are exactly $k$ vertices in $D_{n,k}$ satisfying
\eqref{level-dist}. These $k$ vertices will be labelled by
$v^{(n)}_{\la;
j_1,\dots,j_{s(\mu)},j_{s(\la)}}$,\label{P:DescrLabel} where
$j_{s(\la)}\in \{1,\dots,k\}$, and  $(j_1,\dots,j_{s(\mu)})$ is  the label of the branch of $w$, i.e. $w=v^{(n)}_{\mu;
j_1,\dots,j_{s(\mu)}}$. Note that $s(\la)=s(\mu)+1$. Moreover, in
the situation described above $u=v^{(n)}_{\kappa;
j_1,\dots,j_{s(\kappa)}}$, where $( j_1,\dots,j_{s(\kappa)})$ is
an initial segment of $(j_1,\dots,j_{s(\mu)})$.
\end{itemize}

\begin{figure}[h]
\centering
\includegraphics[scale=0.7]{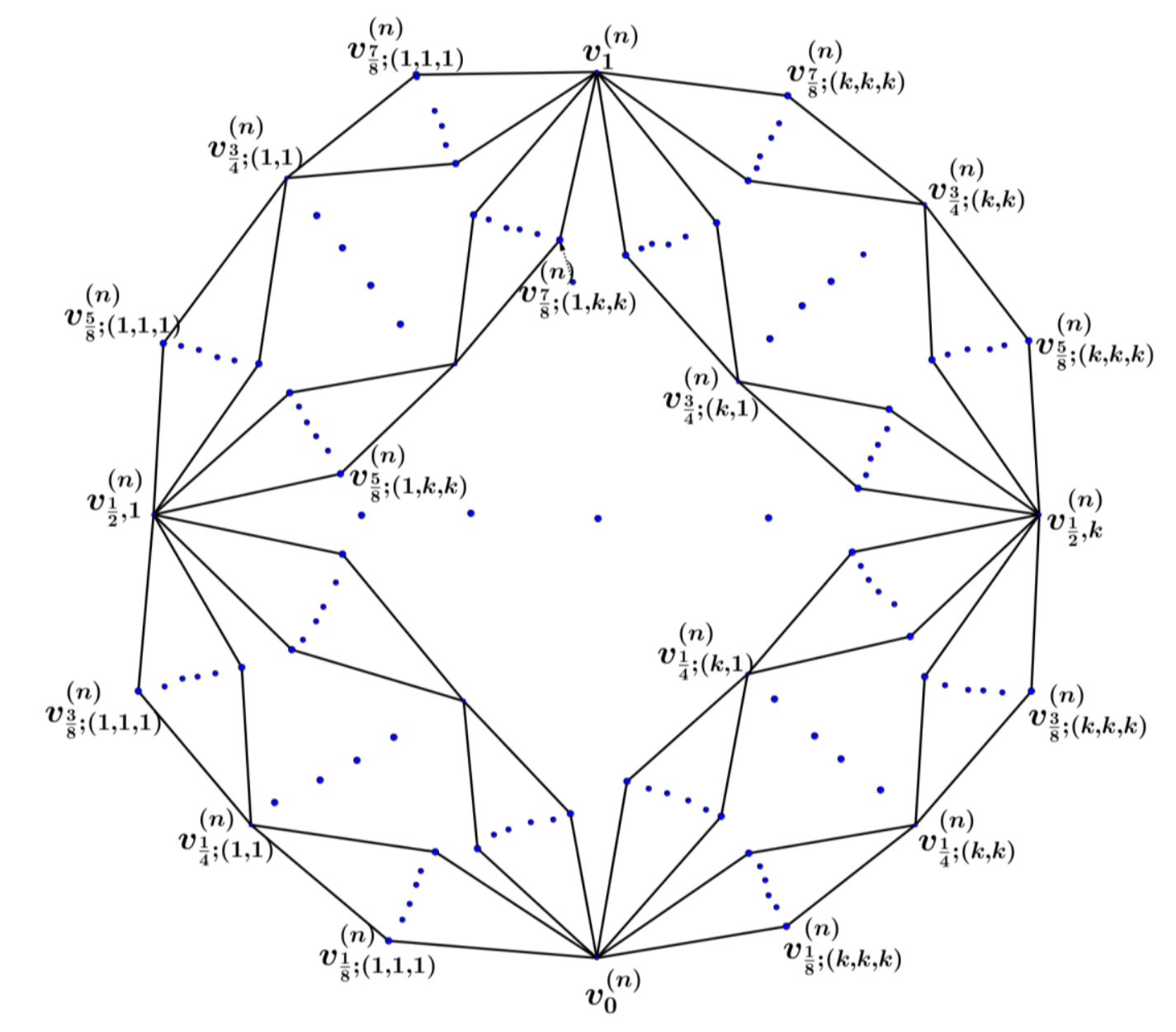}
\caption{Labelling of the diamond}\label{F:diamond-label}
\end{figure}

The following observations are easy consequences of our method of
labelling of vertices:

\begin{observation}\label{O:vertex}
If two vertices  are connected by an edge  in  $D_{n,k}$, then  the  absolute value of the difference between their
 levels is equal to $2^{-n}$. In particular the distance between two vertices that are connected by an edge is equal to  the  absolute value of the difference between their
 levels.
\end{observation}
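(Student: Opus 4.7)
The plan is to prove both statements simultaneously by induction on $n$, using the inductive construction of $D_{n,k}$ from $D_{n-1,k}$ together with the $2^{-n}$ normalization of edge lengths.

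For the base case $n=0$, the graph $D_{0,k}$ consists of the bottom vertex (at level $0$) and the top vertex (at level $1$), connected by a single edge of length $2^0 = 1$, so both claims hold trivially.

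For the inductive step, I assume the observation for $D_{n-1,k}$ with the $2^{-(n-1)}$ normalization and consider an edge $xy$ of $D_{n,k}$. By Definition~\ref{D:BranchDiam}, this edge belongs to one of the $k$ length-$2$ paths that replace some edge $uv$ of $D_{n-1,k}$, so $\{x,y\}=\{u,m\}$ or $\{m,v\}$ for some midpoint vertex $m$ on that path. The key bookkeeping step is to verify that vertices inherited from $D_{n-1,k}$ keep the same level in the new normalization: each edge of $D_{n-1,k}$ at the old scale $2^{-(n-1)}$ is replaced by two edges of length $2^{-n}$, so the distance from the bottom to any inherited vertex is unchanged. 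By the inductive hypothesis, $|\ell_u-\ell_v|=2^{-(n-1)}$, and since $m$ lies on a length-$2$ path (i.e., at graph-distance $2^{-n}$ from each endpoint along this path), its level equals $(\ell_u+\ell_v)/2$, so $|\ell_x-\ell_y|=2^{-n}$ in either case.

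For the second claim, note that the edge $xy$ has length $2^{-n}$ by definition of the normalization, so $d_{D_{n,k}}(x,y)\le 2^{-n}$. On the other hand, the triangle inequality applied with the bottom vertex $v_0^{(n)}$ gives
\[
d_{D_{n,k}}(x,y)\ge \bigl|d_{D_{n,k}}(x,v_0^{(n)})-d_{D_{n,k}}(y,v_0^{(n)})\bigr|=|\ell_x-\ell_y|=2^{-n},
\]
so equality holds throughout, which gives $d_{D_{n,k}}(x,y)=|\ell_x-\ell_y|$.

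The only mildly delicate point — which I would call out explicitly in the write-up — is the normalization bookkeeping: one must check that levels are well-defined independently of which $D_{n,k}$ one views a given inherited vertex as sitting in, since the isometric embedding $D_{n-1,k}\hookrightarrow D_{n,k}$ mentioned earlier ensures that the distance from the bottom to any such vertex is preserved. Everything else is a direct unwinding of Definition~\ref{D:BranchDiam}.
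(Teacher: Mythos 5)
Your inductive argument is correct: the level bookkeeping for inherited vertices, the computation of the new midpoints' levels, and the reverse triangle inequality with the bottom vertex together give exactly the two claims. The paper states this Observation without proof as an easy consequence of the recursive construction and the $2^{-n}$ normalization, and your induction is just the natural unwinding of that construction, so it matches the intended argument.
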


 The last statement is generalized in the next observation.

\begin{observation}\label{O:DistTopBot}
For all $\mu,\la\in B_n$, with $s(\mu)<s(\la)$, and for every
$s(\la)$-tuple \[(j_1,\dots,j_{s(\la)}),\] there exists a geodesic
path in $D_{n,k}$ that connects the bottom and the top vertex of
$D_{n,k}$ and passes through both  the vertices
$v^{(n)}_{\mu;j_1,\dots,j_{s(\mu)}}$ and
$v^{(n)}_{\la;j_1,\dots,j_{s(\la)}}$. Thus
\begin{equation*}
d_{D_{n,k}}(v^{(n)}_{\mu;j_1,\dots,j_{s(\mu)}},
v^{(n)}_{\la;j_1,\dots,j_{s(\la)}})=|\lambda-\mu|.
\end{equation*}

In particular, the distance from any vertex in any subdiamond of $D_{n,k}$ to the bottom or the
top of the subdiamond is equal to the absolute value of the difference between the
corresponding levels.
\end{observation}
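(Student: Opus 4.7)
The plan is to prove by induction on $s(\lambda)$ the following mild strengthening: for every $\mu,\lambda\in B_n$ with $s(\mu)\le s(\lambda)$ and $(j_1,\ldots,j_{s(\mu)})$ a prefix of $(j_1,\ldots,j_{s(\lambda)})$, there is a bottom-to-top geodesic of $D_{n,k}$ passing through both $v^{(n)}_{\mu;j_1,\ldots,j_{s(\mu)}}$ and $v^{(n)}_{\lambda;j_1,\ldots,j_{s(\lambda)}}$. Granted this, the distance formula falls out: every bottom-to-top geodesic has length exactly $1$, and by Observation~\ref{O:vertex} every edge of $D_{n,k}$ changes the level by $\pm 2^{-n}$, so such a geodesic is strictly level-monotone; its subsegment between the two labelled vertices has length $|\lambda-\mu|$, and the same level-monotonicity applied to an arbitrary connecting path gives the opposite inequality. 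The final ``In particular'' clause then drops out by viewing the given subdiamond of height $2^{-m}$ as itself isometric, after rescaling by $2^{m}$, to $D_{n-m,k}$, and invoking the main claim inside it.

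The base cases are routine: $s(\lambda)=0$ forces $\mu=\lambda$, and $s(\lambda)=1$ is witnessed by the $j_1$-th $2$-path of $D_{1,k}$ viewed isometrically inside $D_{n,k}$, which is a bottom-to-top geodesic of length $1$ visiting $v^{(n)}_0$, $v^{(n)}_{1/2;j_1}$, and $v^{(n)}_1$.

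For the inductive step, assume the claim for all $s(\lambda')\le l$ and let $s(\lambda)=l+1$. Let $\kappa,\mu'\in B_n$ be the two values produced by the labelling recursion, so $s(\mu')=l$, $s(\kappa)<l$, and $\lambda$ is the midpoint of $\kappa$ and $\mu'$; write $u=v^{(n)}_{\kappa;\cdot}$, $w=v^{(n)}_{\mu';\cdot}$, and $z=v^{(n)}_{\lambda;\cdot}$. Since $s(\mu)\le l=s(\mu')$ and the label of $\mu$ is a prefix of that of $\mu'$, the induction hypothesis applied to the pair $(\mu,\mu')$ yields a bottom-to-top geodesic $Q$ of $D_{n,k}$ through $v^{(n)}_\mu$ and $w$.

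The main step, which I expect to require the most careful bookkeeping, is to reroute $Q$ so that it also visits $z$ without losing $v^{(n)}_\mu$. Let $\Delta$ be the subdiamond of $D_{n,k}$ of height $2^{-l}$ emerging from the edge $(u,w)$ of $D_{l,k}$; it is isometric, after rescaling by $2^l$, to $D_{n-l,k}$, and $z$ is the middle vertex of its $j_{l+1}$-th $2$-path at the coarsest level. Two facts suffice. First, the only vertices of $D_{n,k}$ connecting $\Delta$ to its complement are $u$ and $w$, so the level-monotone $Q$, in order to visit $w$ from below (or above, symmetrically), must traverse $\Delta$ all the way from $u$ to $w$; in particular $u$ lies on $Q$, and the $Q$-portion inside $\Delta$ is a geodesic from $u$ to $w$ routed along one of the $k$ coarse branches of $\Delta$. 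Second, the only elements of $B_l$ in the closed interval $[\min(\kappa,\mu'),\max(\kappa,\mu')]$ are $\kappa$ and $\mu'$ themselves, since $B_l$ has spacing $2^{-l}=|\kappa-\mu'|$; as $s(\mu)\le l$ forces $\mu\in B_l$, and the bijectivity of the labels at each level matches $v^{(n)}_\mu$ with $u$ when $\mu=\kappa$ and with $w$ when $\mu=\mu'$, the vertex $v^{(n)}_\mu$ lies either outside $\Delta$ or at one of the points $u,w$. Hence replacing the $u$-to-$w$ portion of $Q$ by the path $u\to z\to w$ along the $j_{l+1}$-th $2$-path (with each half filled in by any bottom-to-top geodesic of the corresponding sub-subdiamond of height $2^{-(l+1)}$, which exists by the same argument applied to a smaller instance) produces a bottom-to-top geodesic of the same length through $v^{(n)}_\mu$, $u$, $z$, and $w$, completing the induction.
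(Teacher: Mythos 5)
Your overall plan is reasonable, and most of its ingredients are correct: level-monotonicity of bottom-to-top geodesics via Observation~\ref{O:vertex}, the gate property of the height-$2^{-l}$ subdiamond $\Delta$ spanned by $u$ and $w$, the fact that the only levels of $B_l$ met by $\Delta$ are $\kappa$ and $\mu'$, injectivity of the labelling at a fixed level, and the rerouting of $Q$ through $z$ inside $\Delta$; likewise the passage from the geodesic statement to the distance formula and to the ``in particular'' clause is fine. (The paper offers no written proof here -- it records the statement as an easy consequence of the labelling -- so the only issue is whether your argument is complete.)

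It is not, as written: the induction does not close. Your strengthened claim allows $s(\mu)\le s(\lambda)$, and in the inductive step you invoke the induction hypothesis for the pair $(\mu,\mu')$ with $s(\mu)\le l=s(\mu')$. When $s(\mu)=l$ and $\mu\neq\mu'$ this is an \emph{equal-$s$} pair: two distinct levels with the same $s$-value and identical labels. No part of your argument ever establishes such pairs -- the base cases meet them only vacuously, and the step at stage $l+1$ proves only pairs with $s(\mu)\le l<s(\lambda)=l+1$ -- yet they are genuinely needed. Concretely, take $\lambda=\tfrac38$ (so $l+1=3$, $\kappa=\tfrac12$, $\mu'=\tfrac14$) and $\mu=\tfrac34$: the pair $(\tfrac34,\tfrac38)$ is an instance of the Observation, and your step handles it only through the unproven pair $(\tfrac34,\tfrac14)$, both of $s$-value $2$ with label $(j_1,j_2)$. (A smaller slip: $s(\lambda)=0$ does not force $\mu=\lambda$; one can have $\{\mu,\lambda\}=\{0,1\}$, though that case is trivial.) The gap is repairable in the same spirit: if $a<b$ are distinct levels with $s(a)=s(b)=l$ and the same label $J$, there is an even multiple $c$ of $2^{-l}$ strictly between them, so $s(c)\le l-1$; apply the already-proven strict cases to the pairs $(c,a)$ and $(c,b)$, note that by injectivity of labels at level $c$ the two geodesics pass through the \emph{same} vertex $v^{(n)}_{c;\,\cdot}$, and splice them there -- the concatenation is level-monotone of length $1$, hence a bottom-to-top geodesic through both vertices. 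Alternatively, run the induction on the cleaner statement that a single bottom-to-top geodesic passes through \emph{all} vertices whose labels are prefixes of a fixed $(j_1,\dots,j_m)$ (one per admissible level, in the spirit of Observation~\ref{O:NestSubdiam}), which subsumes both the strict and the equal-$s$ cases at once.
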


\begin{observation}\label{O:NestSubdiam} For every $\la\in B_n$ with $\la\ne 1$, and every $\tau\in\{0,\dots,s(\la)\}$, the vertex $v=v^{(n)}_{\la;
j_1,\dots,j_{s(\la)}}\in D_{n,k}$ belongs to the subdiamond
$\Sigma_\tau(v)$ of height $2^{-\tau}$ uniquely determined by its
bottom and top as follows, cf. Figure~\ref{F:diamond-example}:

\begin{itemize}

\item The bottom is a vertex at the level
$R_{\tau}(\la)\DEF\sum_{\al=0}^\tau(\lambda_\alpha/2^{\alpha})$
labelled by the corresponding initial segment of the label of $v$,
namely $(j_1,\dots,j_{s(R_{\tau}(\la))})$

\item The top is a vertex at the level $R_{\tau}(\la)+2^{-\tau}$,
labelled by the corresponding initial segment of the label of $v$,
namely $(j_1,\dots,j_{s(R_{\tau}(\la)+2^{-\tau})})$

\end{itemize}

The subdiamonds  $\Sigma_\tau(v)$ form a nested sequence in the
following sense:
\[ \Sigma_0(v)\supset \Sigma_1(v)\supset \dots\supset \Sigma_{s(\la)}(v),\]
and if $\la\ne 1$, then $v$ is the bottom vertex of
$\Sigma_{s(\la)}(v)$.
\end{observation}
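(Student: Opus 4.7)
The plan is to argue by induction on $\tau\in\{0,1,\dots,s(\la)\}$, constructing $\Sigma_\tau(v)$ as a sub-subdiamond of $\Sigma_{\tau-1}(v)$ and verifying at each step that its bottom and top have the levels and initial-segment labels asserted in the statement. Before starting the induction I would record a few elementary dyadic facts: that $R_\tau(\la),R_\tau(\la)+2^{-\tau}\in B_n$, that $R_\tau(\la)\le\la\le R_\tau(\la)+2^{-\tau}$, and that $s(R_\tau(\la)),s(R_\tau(\la)+2^{-\tau})\le\tau$, so that the initial segments $(j_1,\dots,j_{s(R_\tau(\la))})$ and $(j_1,\dots,j_{s(R_\tau(\la)+2^{-\tau})})$ of the label of $v$ are well-defined.

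For the base case $\tau=0$, the hypothesis $\la\ne 1$ yields $\la_0=0$, hence $R_0(\la)=0$ and $R_0(\la)+1=1$; together with $s(0)=s(1)=0$ this identifies the bottom as $v^{(n)}_0$ and the top as $v^{(n)}_1$, so that $\Sigma_0(v)=D_{n,k}\ni v$. For the inductive step, I assume $\Sigma_{\tau-1}(v)$ has been correctly identified. By Definition~\ref{D:BranchDiam}, $\Sigma_{\tau-1}(v)$ evolved from a single edge of $D_{\tau-1,k}$ and is, at the next level of refinement, split into $k$ independent paths of length two whose midlevel vertices all sit at level $R_{\tau-1}(\la)+2^{-\tau}$. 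By the recursive labelling rule on page~\pageref{P:DescrLabel}, each of these $k$ midlevel vertices is labelled by extending the label of whichever endpoint of $\Sigma_{\tau-1}(v)$ has the larger $s$-value (which, as one checks, must be $\tau-1$) by one coordinate from $\{1,\dots,k\}$. Consequently the midlevel vertex lying on the same branch as $v$ carries precisely the initial segment $(j_1,\dots,j_{s(R_{\tau-1}(\la)+2^{-\tau})})=(j_1,\dots,j_\tau)$ of $v$'s label. Depending on whether $\la_\tau=0$ or $\la_\tau=1$, the vertex $v$ then sits in the lower or in the upper of the two sub-subdiamonds meeting at this midlevel vertex, and one declares the relevant sub-subdiamond to be $\Sigma_\tau(v)$; a direct computation in both subcases yields that its bottom lies at level $R_{\tau-1}(\la)+\la_\tau 2^{-\tau}=R_\tau(\la)$ and its top at $R_\tau(\la)+2^{-\tau}$, with labels matching those claimed, either by inheritance from an endpoint of $\Sigma_{\tau-1}(v)$ or from the labelling rule just invoked.

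The nesting chain $\Sigma_0(v)\supset\dots\supset\Sigma_{s(\la)}(v)$ is an automatic byproduct of the induction, and the final sentence of the observation follows by plugging $\tau=s(\la)$: then $R_{s(\la)}(\la)=\la$ and $s(R_{s(\la)}(\la))=s(\la)$, so the bottom of $\Sigma_{s(\la)}(v)$ is the unique vertex at level $\la$ carrying the full label $(j_1,\dots,j_{s(\la)})$, namely $v$ itself. I expect the main bookkeeping obstacle to lie in the verification of the parenthetical claim $\max\{s(R_{\tau-1}(\la)),s(R_{\tau-1}(\la)+2^{-(\tau-1)})\}=\tau-1$, and in identifying which of the two endpoints of $\Sigma_{\tau-1}(v)$ realises this maximum. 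This depends on whether $\la_{\tau-1}$ equals $0$ or $1$, and in the latter case one has to trace a possible carrying in the dyadic expansion of $R_{\tau-1}(\la)+2^{-(\tau-1)}$; an inspection of the last run of consecutive $1$'s in the dyadic expansion of $\la$ ending at position $\tau-1$ settles the matter in both subcases, after which the induction runs without surprises.
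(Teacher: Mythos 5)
The paper gives no written proof of Observation~\ref{O:NestSubdiam} (it is presented as an easy consequence of the labelling scheme of Section~\ref{S:DescrD}), so your argument has to stand on its own, and your plan -- induction on $\tau$, descending one scale at a time and reading levels off the dyadic digits -- is the natural one. The dyadic bookkeeping you flag as the main obstacle (that $\max\{s(R_{\tau-1}(\la)),s(R_{\tau-1}(\la)+2^{-(\tau-1)})\}=\tau-1$, with a carry analysis when $\la_{\tau-1}=1$) is indeed routine and your sketch of it is correct. The genuine gap is in the sentence beginning ``Consequently the midlevel vertex lying on the same branch as $v$ carries precisely the initial segment $(j_1,\dots,j_\tau)$.'' From your inductive hypothesis (the bottom and top of $\Sigma_{\tau-1}(v)$ have the stated levels and labels, and $v\in\Sigma_{\tau-1}(v)$) together with the labelling rule, all you can conclude is that the $k$ midlevel vertices of $\Sigma_{\tau-1}(v)$ carry the labels $(j_1,\dots,j_{\tau-1},j)$ with $j$ ranging over $\{1,\dots,k\}$; the rule assigns these last coordinates to the $k$ vertices with no intrinsic link to anything else, so nothing so far ties the particular value $j=j_\tau$ to the branch that geometrically contains $v$. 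That link is exactly the content of the observation at scale $\tau$, and it can only come from the recursive construction of $v$'s own label, which is built by appending coordinates along the chain of vertices $v\mapsto w\mapsto\cdots$ with $s$-values decreasing from $s(\la)$ (the pair $u,w$ described on page~\pageref{P:DescrLabel}); your inductive step never invokes that construction, so the word ``Consequently'' is assuming what is to be proved.

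The repair is to run (also) an induction on $s(\la)$. If $s(\la)=l+1$, the vertices $u$ (level $\kappa$) and $w$ (level $\mu$, $s(\mu)=l$, label $(j_1,\dots,j_l)$) appearing in the definition of $v$'s label are, depending on whether $\la_l=1$ or $\la_l=0$, precisely the bottom and top (in one order or the other) of the claimed $\Sigma_l(v)$, since $\{\kappa,\mu\}=\{R_l(\la),R_l(\la)+2^{-l}\}$; this settles the case $\tau=l$ directly from the way $v$ was labelled. For $\tau<l$ one checks that $R_\tau(\mu)=R_\tau(\la)$ (the expansions of $\mu$ and $\la$ can differ only at position $l$), so the prescribed bottom/top of $\Sigma_\tau(v)$ coincide with those of $\Sigma_\tau(w)$; the induction hypothesis applied to $w$, together with the fact that the height-$2^{-l}$ subdiamond with bottom/top pair $\{u,w\}$ (which contains $v$ by construction) sits inside the unique larger subdiamonds through that pair, then yields $v\in\Sigma_\tau(v)$ with the stated labels. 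With this modification your level computations, the carry discussion, and the endgame at $\tau=s(\la)$ all go through unchanged.
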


\begin{figure}[h]
\centering
\includegraphics[scale=0.9]{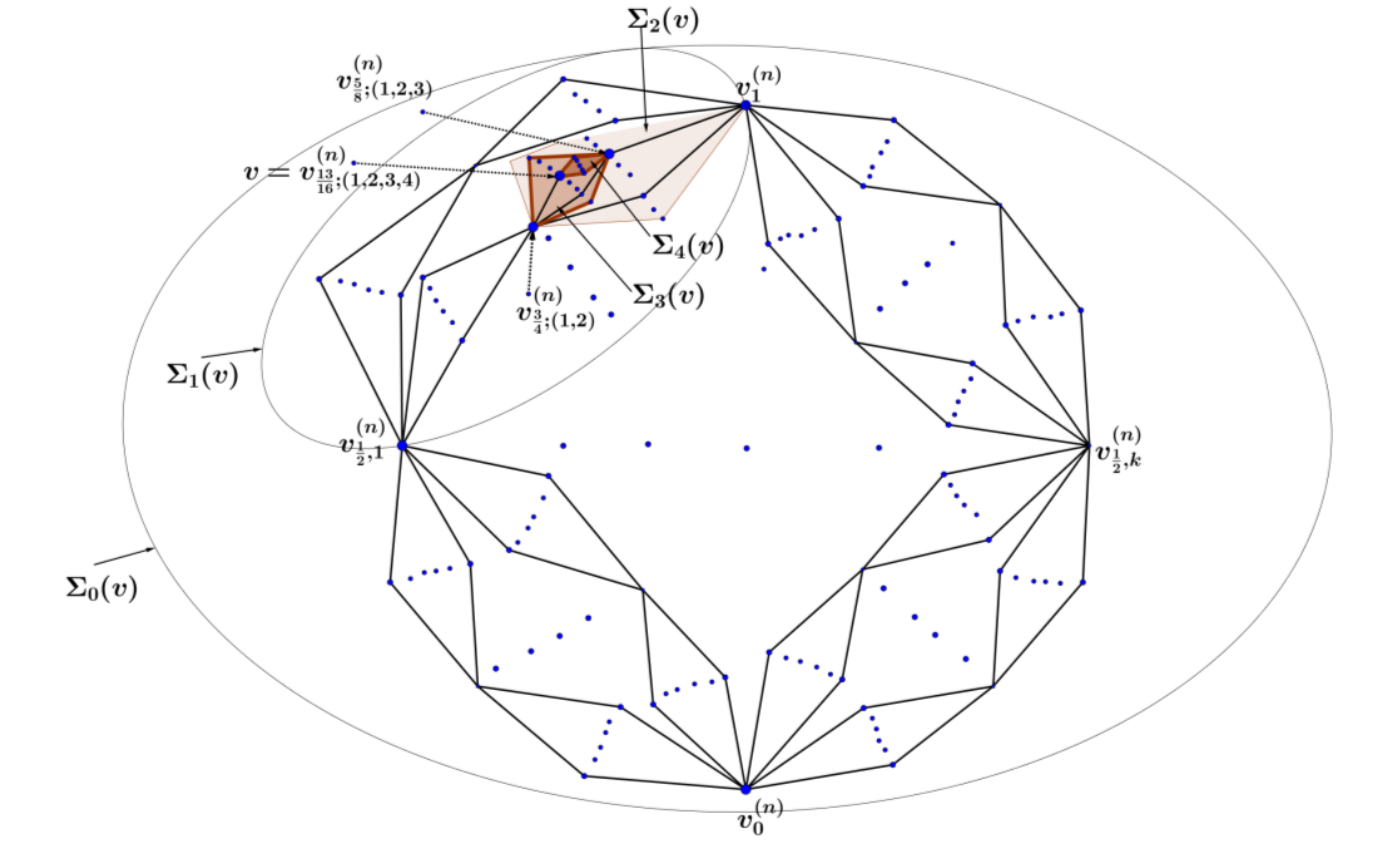}
\caption{Subdiamonds in Observation~\ref{O:NestSubdiam}, cf. Example~\ref{ex-subd}.}\label{F:diamond-example}
\end{figure}

\begin{example} \lb{ex-subd}
If $\la= \frac12+\frac14+\frac1{16}$, and
$v=v^{(n)}_{\la,1,2,3,4}$, then bottom and top vertices of
$\Sigma_\tau(v)$ are, respectively, see Figure~\ref{ex-subd}:

-- in $\Sigma_0(v)$: $v^{(n)}_0$ and $v^{(n)}_1$,

-- in $\Sigma_1(v)$: $v^{(n)}_{\frac12,1}$ and $v^{(n)}_1$,

-- in $\Sigma_2(v)$: $v^{(n)}_{\frac12+\frac14,1,2}$ and $v^{(n)}_1$,

--  in $\Sigma_3(v)$: $v^{(n)}_{\frac12+\frac14,1,2}$ and $v^{(n)}_{\frac12+\frac14+\frac18,1,2,3}$\ ,

--  in  $\Sigma_4(v)$: $v^{(n)}_{\frac12+\frac14+\frac1{16},1,2,3,4}=v$ and $v^{(n)}_{\frac12+\frac14+\frac18,1,2,3}$\ .
\end{example}

\section{Embedding diamonds into  spaces with an \ESA\ basis -- Proof of
Theorem~\ref{T:Main}}\label{S:D1k}

By Theorem~\ref{T:BS}    of Brunel and Sucheston, in order to prove
Theorem~\ref{T:Main} it suffices to find, for each
$n,k\in\mathbb{N}$, a bilipschitz   embedding with distortion at most $8$  of
$D_{n,k}$ into an arbitrary Banach space with an ESA basis.  This section is devoted to a construction of such  embeddings.

Recall that a {\it metric midpoint} (or a {\it midpoint}) between
points $u$ and $v$ in a metric space $(Y,d_Y)$ is a point $w\in Y$
so that $d_Y(u,w)=d_Y(w,v)=\frac12d_Y(u,v)$.

We note that the diamond $D_{n,k}$ has numerous midpoints between
many pairs of   points, in particular there are $k$ midpoints
between the top and the bottom vertex,    $k$ midpoints between
the top and each vertex at level $\frac12$,  $k$ midpoints between
the bottom and each vertex at level $\frac12$, and so on. In fact
the recursive  construction of the diamond $D_{n,k}$ can be viewed
as adding  $k$ midpoints between every pair of existing points
that are connected by an edge. For this reason, to construct an
embedding  of $D_{n,k}$ into a Banach space $X$, we need to
develop a method of constructing elements in $X$ that have
multiple well-separated metric midpoints that themselves also have
multiple well-separated metric midpoints, and so on, for several
iterations. The general construction is rather technical, so we
prefer to start with the simple but, hopefully, illuminating case
of $n=1$. It is worth mentioning that one can easily find a
bilipschitz embedding with distortion $\le 2+\ep$ of $D_{1,k}$
into an arbitrary infinite-dimensional Banach space (including
superreflexive spaces). The usefulness of the construction
described below is in the existence of a suitable iteration, that
leads to a low-distortion embedding of $D_{n,k}$.

\subsection{Warmup: Embedding of $D_{1,k}$ into  spaces with an \ESA\
basis}\label{S:1,k}

Recall that $D_{1,k}$ consists of the bottom vertex, the top
vertex at distance $1$
from the bottom vertex, and $k$ midpoints between the top and the
bottom, that are at distance $1$ from each other.

We will work with finitely supported elements  of $X$, whose
coefficients in their basis representations are $0$ and $\pm 1$.
We shall write $+1$ as $+$ and $-1$ as $-$.

First we consider an element $h=e_1+e_2-e_3-e_4$, i.e.
$h=(++--00\dots).$ To simplify the notation, we omit brackets and
0's that appear at the end of sequences of coefficients for basis
expansions of every finitely supported element in $X$, i.e. we
write
$$h=++--.$$

Since the basis  $\{e_i\}_{i=1}^\infty$ is ESA (recall that by
Lemma~\ref{lemmaESA},    ESA is equivalent to SA, and implies IS),
we conclude that
\begin{equation}\label{E:Norm_h}\|h\|=2\|+-\|,\end{equation} and, by IS  of the basis,  the elements
$$ h_+= 0+-0, \ \ \ h_-=+00-$$
are both metric midpoints between $h$ and $0$. Further, we have
$$\|h_+-h_-\|=\|-+-+\|\stackrel{\rm SA}{\ge}\|-+\|\stackrel{\eqref{E:Norm_h}}{=}\frac12\|h\|.$$

Thus there are two well-separated metric midpoints between $h$ and
$0$. We can use $h$ and the ESA property of the basis to construct
an element in $X$ such that there are $M$ well-separated metric
midpoints between this element and $0$, where $M$ is any natural
number.

Indeed, consider an element $x_1$ equal to the sum of $2^M$
shifted disjoint copies of $h$, i.e., if  $S$ denotes the shift
operator on the basis ($Se_i\DEF e_{i+1}$),
\begin{equation*}
\begin{split}
x_1&=\sum_{\nu=0}^{2^M-1} S^{4\nu}(h)\\
&=++--++--++--++--....++--.
\end{split}
\end{equation*}

 By IS and ESA of the basis we have
\begin{equation*}
\begin{split}
\|x_{1}\|&=2\|\sum_{\nu=0}^{2^M-1} S^{2\nu}(e_1-e_2)\|\\
&=2\|\underbrace{+-+-....+-}_{2^M {\text {\small\  pairs}}}\|.
\end{split}
\end{equation*}

Let $r_1,\dots,r_M$ denote the (natural analogues of) Rademacher
functions on $\{0,\dots,2^M-1\}$. We assume that $M\ge k$ and define the element $m_j$,
$j=1,\dots,k$, as the sum of $2^M$ disjoint blocks, where each
block $++--$ of $x_1$ is replaced either by $0+-0$, i.e. by $h_+$,
if the corresponding value of $r_j$ is $1$, or by $+00-$, i.e. by
$h_-$, if the corresponding value of $r_j$ is $-1$, that is ($h_+$
and $h_-$ are defined above)
$$m_j=\sum_{\nu=0}^{2^M-1} S^{4\nu}(h_{r_j(\nu)}).  $$

Since, for all $1\le j\le M$, each block of $m_j$ contains exactly
one $+$ and one $-$, and the position of the $+$ is always before
$-$, by IS and SA of the basis we have
\begin{equation*}
\begin{split}
\|m_j\|&=\|\sum_{\nu=0}^{2^M-1} S^{4\nu}(e_1-e_2)\|=\frac12
\|x_{1}\|.
\end{split}
\end{equation*}

The same estimate holds for $x_1-m_j$, and so
$\|x_1-m_j\|=\frac12\|x_1\|$. Thus, for all $1\le j\le k$, the
vector $m_j$ is a metric midpoint between $x_1$ and $0$.\medskip

To compute the distance between different midpoints $m_i$ and
$m_j$, we note that $h_+-h_-=-+-+$. Since $i\ne j$, for half of
the values of $\nu$, we have $r_i(\nu)=r_j(\nu)$. For these values
of $\nu$, the $\nu$-th block in $m_i-m_j$ is $0000$. For one
quarter of values of $\nu$, we have $r_i(\nu)=1, r_j(\nu)=-1$. For
these values the block is $h_+-h_-=-+-+$. For the remaining one
quarter of values of $\nu$, we have $r_i(\nu)=-1, r_j(\nu)=1$, and
the block becomes $+-+-$. By SA of the basis, we can replace all
blocks $+-+-$ by $0000$ without increasing the norm. Thus, by IS
and SA of the basis we obtain
\begin{equation*}
\begin{split}
\|m_i-m_j\|&=\|\sum_{\nu=0}^{2^M-1} S^{4\nu}(h_{r_i(\nu)}-h_{r_j(\nu)})\|\\
&\ge\|\sum_{\nu=0}^{2^{M-2}-1} S^{4\nu}(h_+-h_-)\|\\
&=\|\sum_{\nu=0}^{2^{M-1}-1} S^{2\nu}(-e_1+e_2)\|\\
&\ge\frac14\|x_1\|,
\end{split}
\end{equation*}
where the last inequality follows from the triangle inequality,
since, by IS,  for every $N\in\bbN$,
\begin{equation}\lb{triangle}
\begin{split}
\|\sum_{\nu=0}^{N-1} S^{2\nu}(e_1-e_2)\|&=\frac12\left[\|\sum_{\nu=0}^{N-1} S^{2\nu}(e_1-e_2)\|+\|\sum_{\nu=N}^{2N-1} S^{2\nu}(e_1-e_2)\|\right] \\
&\ge\frac12\,\|\sum_{\nu=0}^{2N-1} S^{2\nu}(e_1-e_2)\|.
\end{split}
\end{equation}

Thus the metric midpoints $\{m_i\}_{i=1}^k$ between $x_1$ and $0$
are well-separated, and therefore the embedding of the diamond
$D_{1,k}$ into $X$ that sends the bottom vertex to $0$, the top
vertex to $x_1$, and the $k$ vertices at the level $\frac12$  of
$D_{1,k}$ to the $k$ midpoints $\{m_i\}_{i=1}^k$, has distortion
at most $4$.

The most important feature of this construction is that it can be
iterated without large increase of distortion, as we demonstrate
below.

\subsection{Description of the embedding of $D_{n,k}$ into a space with an \ESA\
basis}\label{S:n,k}

Our next goal is to define a low-distortion embedding of $D_{n,k}$
into a space $X$ with an \ESA\ basis $\{e_i\}_{i=1}^\infty$.
We want to find an element in $X$, that we will denote by $x^{(n)}_1$, that has
 at least $k$ well-separated (exact) metric midpoints, with an additional property that the selected $k$ metric midpoints of $x^{(n)}_1$
have a structure sufficiently similar to the element $x^{(n)}_1$,
so that the procedure of selecting $k$ good well-separated metric
midpoints can be iterated $n$ times, cf. Remark~\ref{rem-idea}
below. To achieve this goal we will  generalize    the
construction described in Section~\ref{S:1,k}.
\medskip

We shall continue using the notation $+$ for $+1$ and $-$ for $-1$
(with the hope that in each case it will be clear from the context whether we use
this convention or we use $+$ and $-$ to denote algebraic
operations). We define the element $h^{(n)}$, by
\begin{equation*}\label{def-block}
\begin{split}
h^{(n)}&=\sum_{l=1}^{2^n}e_l-\sum_{l=2^n+1}^{2^{n+1}}e_l\\
&=\underbrace{+\dots+}_{2^n}\underbrace{-\dots-}_{2^n}.
\end{split}
\end{equation*}

The element  $h^{(n)}$ is supported on the interval $[1,2^{n+1}]$.
We denote the support of the positive part of  $h^{(n)}$, that is,
the interval $[1,2^{n}]$, by $I^{(n)}$. Note that $\card(I^{(n)})=2^n$.

We will denote by ${\tt Ref}_n$ the reflection about the center of the interval, on the interval $[1,2^{n+1}]$, that is, for $j\in [1,2^{n+1}]$,
$${\tt Ref}_n(j)\DEF 2^{n+1}-j+1.$$

Note that, in this notation,
$h^{(n)}=\one_{I^{(n)}}-\one_{\RR(I^{(n)})},$ where $\one_A$
denotes the indicator function of the set $A$.

We define $h^{(n)}_+$ and
$h^{(n)}_-$ by
\begin{equation*}\label{h+}
h^{(n)}_+=\underbrace{0\dots0}_{2^{n-1}}\underbrace{+\dots+}_{2^{n-1}\atop{I^{(n)}_{+}}}\underbrace{-\dots-}_{2^{n-1}\atop{\RR(I^{(n)}_{+})}}\underbrace{0\dots0}_{2^{n-1}},
\end{equation*}
and
\begin{equation*}\label{h-}
h^{(n)}_-=\underbrace{+\dots+}_{2^{n-1}\atop{I^{(n)}_{-}}}\underbrace{0\dots0}_{2^{n-1}}\underbrace{0\dots0}_{2^{n-1}}\underbrace{-\dots-}_{2^{n-1}\atop{\RR(I^{(n)}_{-})}}.
\end{equation*}
We  denote the supports of the positive parts of $h^{(n)}_+$ and
$h^{(n)}_-$ by $I^{(n)}_+$ and $I^{(n)}_-$, respectively. Note
that intervals $I^{(n)}_+$ and $I^{(n)}_-$ are disjoint, are
contained in $I^{(n)}$,
$\card(I^{(n)}_+)=\card(I^{(n)}_-)=2^{n-1}$,  and the interval
$I^{(n)}_-$ precedes the interval $I^{(n)}_+$, i.e. the right
endpoint of $I^{(n)}_-$ is less than the left endpoint of
$I^{(n)}_+$. Moreover
$$h^{(n)}_+=\one_{I^{(n)}_+}-\one_{\RR(I^{(n)}_+)},\ \ \ h^{(n)}_-=\one_{I^{(n)}_-}-\one_{\RR(I^{(n)}_-)}.$$

Clearly, $h^{(n)}=h^{(n)}_++h^{(n)}_-$. Note that by IS and ESA
of the basis, we have
\begin{equation*}\label{normh+}
\big\|h^{(n)}_+\big\|=\big\|h^{(n)}_-\big\|=\frac12\big\|h^{(n)}\big\|=2^{n-1}\|e_1-e_2\|.
\end{equation*}

For any $1<\al\le n$, and $\e_i=\pm1$, for $1\le i\le\al$, if $h^{(n)}_{\e_1,\dots,\e_{\al-1}}$ is already defined,    $I^{(n)}_{\e_1,\dots,\e_{\al-1}}$ denotes the support of the positive part  of $h^{(n)}_{\e_1,\dots,\e_{\al-1}}$, and $h^{(n)}_{\e_1,\dots,\e_{\al-1}}=\one_{I^{(n)}_{\e_1,\dots,\e_{\al-1}}}- \one_{\RR(I^{(n)}_{\e_1,\dots,\e_{\al-1}})}$, we define $I^{(n)}_{\e_1,\dots,\e_{\al-1},+}$ to be the subinterval consisting  of $2^{n-\al}$ largest coordinates of $I^{(n)}_{\e_1,\dots,\e_{\al-1}}$, and we define
 \begin{equation*}\label{defh++}
 h^{(n)}_{\e_1,\dots,\e_{\al-1},+}\DEF  \one_{I^{(n)}_{\e_1,\dots,\e_{\al-1},+}}- \one_{\RR(I^{(n)}_{\e_1,\dots,\e_{\al-1},+})}.
 \end{equation*}

 We define $I^{(n)}_{\e_1,\dots,\e_{\al-1},-}=I^{(n)}_{\e_1,\dots,\e_{\al-1}}\setminus I^{(n)}_{\e_1,\dots,\e_{\al-1},+}$, and
 \begin{equation*}
 \begin{split}
 h^{(n)}_{\e_1,\dots,\e_{\al-1},-}&\DEF \one_{I^{(n)}_{\e_1,\dots,\e_{\al-1},-}}- \one_{\RR(I^{(n)}_{\e_1,\dots,\e_{\al-1},-})}\\
 &= h^{(n)}_{\e_1,\dots,\e_{\al-1}}-h^{(n)}_{\e_1,\dots,\e_{\al-1},+}.
 \end{split}
 \end{equation*}
Thus  the supports of  $h^{(n)}_{\e_1,\dots,\e_{\al-1},+}$ and
$h^{(n)}_{\e_1,\dots,\e_{\al-1},-}$ are disjoint, have the same
cardinality ($=2^{n-\al}$), and their union is equal to  the
support of  $h^{(n)}_{\e_1,\dots,\e_{\al-1}}$. In other words, the
intervals $I^{(n)}_{\e_1,\dots,\e_{\al-1},+}$ and
$I^{(n)}_{\e_1,\dots,\e_{\al-1},-}$ are disjoint, are contained in
$I^{(n)}_{\e_1,\dots,\e_{\al-1}}$,
 the interval $I^{(n)}_{\e_1,\dots,\e_{\al-1},-}$ precedes the interval $I^{(n)}_{\e_1,\dots,\e_{\al-1},+}$,  and
\begin{equation}\lb{cardI}
 \begin{split}
 \card(I^{(n)}_{\e_1,\dots,\e_{\al-1},+})=\card(I^{(n)}_{\e_1,\dots,\e_{\al-1},-})=\frac12\card(I^{(n)}_{\e_1,\dots,\e_{\al-1}})=2^{n-\al}.
 \end{split}
 \end{equation}

We see the following pattern
\begin{align*}\label{h++}
h^{(n)}_{++}&=\underbrace{0\dots\dots\dots\dots0}_{2^{n-1}}\underbrace{0\dots0}_{ {2^{n-2}}}
\underbrace{+\dots+}_{2^{n-2}\atop{I^{(n)}_{++}}}\underbrace{-\dots-}_{2^{n-2}\atop{\RR(I^{(n)}_{++})}}\underbrace{0\dots0}_{2^{n-2}}\underbrace{0\dots\dots\dots\dots0}_{2^{n-1}},\\
h^{(n)}_{+-}&=\underbrace{0\dots\dots\dots\dots0}_{2^{n-1}}\underbrace{+\dots+}_{2^{n-2}\atop{I^{(n)}_{+-}}}\underbrace{0\dots0}_{2^{n-2}}\underbrace{0\dots0}_{2^{n-2}}
\underbrace{-\dots-}_{2^{n-2}\atop{\RR(I^{(n)}_{+-})}}\underbrace{0\dots\dots\dots\dots0}_{2^{n-1}},\\
h^{(n)}_{-+}&=\underbrace{0\dots0}_{2^{n-2}}\underbrace{+\dots+}_{2^{n-2}\atop{I^{(n)}_{-+}}}\underbrace{0\dots\dots\dots\dots0}_{2^{n-1}}\underbrace{0\dots\dots\dots\dots0}_{2^{n-1}}\underbrace{-\dots-}_{2^{n-2}\atop{\RR(I^{(n)}_{-+})}}\underbrace{0\dots0}_{2^{n-2}},\\
h^{(n)}_{--}&=\underbrace{+\dots+}_{2^{n-2}\atop{I^{(n)}_{--}}}\underbrace{0\dots0}_{2^{n-2}}\underbrace{0\dots\dots\dots\dots0}_{2^{n-1}}\underbrace{0\dots\dots\dots\dots0}_{2^{n-1}}\underbrace{0\dots0}_{2^{n-2}}\underbrace{-\dots-}_{2^{n-2}\atop{\RR(I^{(n)}_{--})}},
\end{align*}
and so on.

By IS and ESA of the
basis, we have, for all $\al=1,\dots,n$, and all
$\{\e_i\}_{i=1}^\al\in\{-1,1\}^\al$,
\begin{equation*}\label{normh+al}
\big\|h^{(n)}_{\e_1,\dots,\e_\al}\big\|=\frac12\big\|h^{(n)}_{\e_1,\dots,\e_{\al-1}}\big\|=\frac1{2^\al}\big\|h^{(n)}\big\|=2^{n-\al}\|e_1-e_2\|.
\end{equation*}

Moreover we have:
\begin{observation}\label{O:OnSupp} The supports of any two vectors
$h^{(n)}_{\e_1,\dots,\e_{\al}}$ and
$h^{(n)}_{\theta_1,\dots,\theta_{\beta}}$ are either contained one
in the other or are disjoint. The support of
$h^{(n)}_{\e_1,\dots,\e_{\al}}$ is contained in the support of
$h^{(n)}_{\theta_1,\dots,\theta_{\beta}}$ if and only if the
string ${\theta_1,\dots,\theta_{\beta}}$ is the initial part of
the string ${\e_1,\dots,\e_{\al}}$. In this case the vector
$h^{(n)}_{\e_1,\dots,\e_{\al}}$ can be regarded as a
coordinate-wise  product
$h^{(n)}_{\theta_1,\dots,\theta_{\beta}}\cdot
\1_{\supp(h^{(n)}_{\e_1,\dots,\e_{\al}})}$.

Let us emphasize that the statement above implies that
$h^{(n)}_{\e_1,\dots,\e_{\al}}$ and
$h^{(n)}_{\theta_1,\dots,\theta_{\beta}}$ are disjointly supported
if and only if there is $\gamma\le\min\{\alpha,\beta\}$ such that
$\e_{\gamma}=-\theta_\gamma$.
\end{observation}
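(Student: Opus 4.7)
The plan is to reduce Observation~\ref{O:OnSupp} to a purely combinatorial statement about the positive-support intervals $I^{(n)}_{\e_1,\dots,\e_{\al}}$ and then lift the conclusion to the full supports via the involution $\RR$. Specifically, I will prove by induction on $\alpha$ the following structural claim: the intervals $\{I^{(n)}_{\e_1,\dots,\e_{\al}}\}$ form a dyadic tree, in the sense that any two such intervals are nested or disjoint, with $I^{(n)}_{\e_1,\dots,\e_{\al}}\subset I^{(n)}_{\theta_1,\dots,\theta_{\beta}}$ if and only if $(\theta_1,\dots,\theta_{\beta})$ is an initial segment of $(\e_1,\dots,\e_{\al})$. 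Granting the claim, the full supports inherit the same dichotomy: $\RR$ is an order-reversing involution on $[1,2^{n+1}]$, so nested positive parts produce nested reflected parts and disjoint positive parts produce disjoint reflected parts; moreover, every positive part lies in $[1,2^n]$ while every reflected part lies in $[2^n+1,2^{n+1}]$, so the positive part of one vector never meets the reflected part of another.

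For the structural claim, the induction is driven by the defining recursion: $I^{(n)}_{\e_1,\dots,\e_{\al-1},+}$ and $I^{(n)}_{\e_1,\dots,\e_{\al-1},-}$ partition $I^{(n)}_{\e_1,\dots,\e_{\al-1}}$ into two disjoint halves of equal cardinality (equation~\eqref{cardI}). Iterating this observation immediately yields $I^{(n)}_{\e_1,\dots,\e_{\al}}\subset I^{(n)}_{\e_1,\dots,\e_{\be}}$ whenever $(\e_1,\dots,\e_{\be})$ is an initial segment of $(\e_1,\dots,\e_{\al})$, which settles one direction. For the other direction, suppose neither label is an initial segment of the other and let $\gamma\le\min\{\alpha,\beta\}$ be the least index with $\e_\gamma\ne\theta_\gamma$; then $I^{(n)}_{\e_1,\dots,\e_\gamma}$ and $I^{(n)}_{\theta_1,\dots,\theta_\gamma}$ are the two disjoint halves of the common interval $I^{(n)}_{\e_1,\dots,\e_{\gamma-1}}=I^{(n)}_{\theta_1,\dots,\theta_{\gamma-1}}$, and all further refinements remain inside their respective halves. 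This simultaneously establishes the nested-or-disjoint alternative and the iff characterization of disjoint support through the existence of an index with $\e_\gamma=-\theta_\gamma$.

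For the final assertion, when $I^{(n)}_{\e_1,\dots,\e_{\al}}\subset I^{(n)}_{\theta_1,\dots,\theta_{\beta}}$, the defining formula $h^{(n)}_{\theta_1,\dots,\theta_{\beta}}=\one_{I^{(n)}_{\theta_1,\dots,\theta_{\beta}}}-\one_{\RR(I^{(n)}_{\theta_1,\dots,\theta_{\beta}})}$ forces this vector to take value $+1$ on all of $I^{(n)}_{\e_1,\dots,\e_{\al}}$ and value $-1$ on all of $\RR(I^{(n)}_{\e_1,\dots,\e_{\al}})$; multiplying coordinatewise by $\one_{\supp(h^{(n)}_{\e_1,\dots,\e_{\al}})}$ therefore recovers exactly the $\pm 1$ pattern defining $h^{(n)}_{\e_1,\dots,\e_{\al}}$. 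I do not anticipate a conceptually hard step; the main effort is bookkeeping the nested dyadic intervals and checking that $\RR$ respects the tree structure, both of which follow directly from the inductive definition of the $I^{(n)}_{\e_1,\dots,\e_{\al}}$.
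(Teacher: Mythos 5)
Your proposal is correct and follows what the paper leaves implicit: the paper states this as an unproved observation, evidently regarding it as immediate from the inductive definition of the intervals $I^{(n)}_{\e_1,\dots,\e_{\al}}$ (each split into two disjoint halves by the $\pm$ refinement) together with the fixed reflection $\RR$, which is exactly the dyadic-tree argument you spell out. Your lifting from the positive parts to the full supports via the order-reversing involution $\RR$ and the separation $I^{(n)}_{\e_1,\dots,\e_{\al}}\subseteq[1,2^n]$, $\RR(I^{(n)}_{\e_1,\dots,\e_{\al}})\subseteq[2^n+1,2^{n+1}]$ is the right bookkeeping and fills in the details faithfully.
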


Finally, let $\mathcal{P}$ be the set of all tuples
$(j_1,\dots,j_s)$ of all lengths between $1$ and $n$, where each
$j_i$ is in $\{1,\dots,k\}$, that is, $\mathcal{P}$ is the set of
all labels of branches in the diamond $D_{n,k}$. We will denote
the cardinality of $\mathcal{P}$ by $M$, that is
  \begin{equation*}\lb{defM}
  M\DEF \card({\mathcal{P}})= k+k^2+\dots+k^n.
  \end{equation*}
\medskip

For $A\in\mathcal{P}$, let $r_A$  be the (natural analogues of)
Rademacher functions on $\{0,\dots,2^{M}-1\}$.

\subsection{Definition of the map}

Now we are ready to define a bilipschitz embedding of $D_{n,k}$
into $X$. We shall denote the image of $v^{(n)}_{\la;
j_1,\dots,j_{s(\la)}}$ in $X$ by $x^{(n)}_{\la;
j_1,\dots,j_{s(\la)}}$.

We define the image of the bottom vertex $v_0^{(n)}$ of  $D_{n,k}$
to be zero (that is, $x_0^{(n)}=0)$, and the image of the top
vertex to be the element $x^{(n)}_1$ that is defined as the sum
of $2^M$ disjoint shifted copies of $h^{(n)}$, more precisely,
\begin{equation*}
x^{(n)}_1=\sum_{\nu=0}^{2^{M}-1} S^{2^{n+1}\nu}(h^{(n)}),
\end{equation*}
where, as above, $S$ denotes the shift operator (i.e. $Se_i\DEF e_{i+1}$).

Note that, by IS and ESA  of the basis we have
\begin{equation}\lb{norm-top}
\begin{split}
\|x^{(n)}_1\|&=2^n\Big\|\sum_{\nu=0}^{2^{M}-1}
S^{2^{n+1}\nu}(e_1-e_2)\Big\|=2^n\Big\|\sum_{\nu=0}^{2^{M}-1}
S^{2\nu}(e_1-e_2)\Big\|.
\end{split}
\end{equation}

We will use the notation $S^{2^{n+1}\nu}[1, 2^{n+1}]$,
$S^{2^{n+1}\nu}I^{(n)}$,
$S^{2^{n+1}\nu}I^{(n)}_{\e_1,\dots,\e_{\al}}$ for the
 shifts of the sets $[1, 2^{n+1}]$, $I^{(n)}$,
$I^{(n)}_{\e_1,\dots,\e_{\al}}$, respectively. We will use the term {\it $\nu$-th block}, or {\it block number
$\nu$}, for the restriction of any of the considered vectors to
$S^{2^{n+1}\nu}[1, 2^{n+1}]$.

\begin{remark} \lb{rem-idea}
Our main reason for choosing this $x^{(n)}_1$ is that
 there are many well-separated (exact)  metric midpoints between $0$ and  $x^{(n)}_1$. Namely, when in each block we  replace  $h^{(n)}$ by  either $h^{(n)}_+$ or $h^{(n)}_-$, for all possible choices, we obtain an element in $X$ that
is a metric midpoint between $0$ and $x^{(n)}_1$. Further, if we
use the values of the  Rademacher functions at $\nu\in
\{0,\dots,2^{M}-1\}$, to decide the choice of  $h^{(n)}_+$ or
$h^{(n)}_-$ for the $\nu$-th block, then, by the independence of
the Rademacher functions, we will be able to estimate the distance
between metric midpoints determined by different Rademacher
functions,  similarly as in
 Section~\ref{S:1,k}.    Moreover, each midpoint obtained this way in every block has
 entries structurally very similar to the elements $h^{(n-1)}$. This is vitally important for us, because this structure, together with the ESA property of the basis, will allow us to iterate this procedure $n$ times to obtain the embedding of the diamond $D_{n,k}$.
  We will make this  precise below.
\end{remark}

Since our definition of the map (on vertices different from the
top and the bottom) is rather complicated, we decided to give it
both as an inductive procedure  and as an explicit formula.

\subsubsection{Inductive form of the definition}
\label{S:VerbDef}

Our definition of the map is such that each vector $x^{(n)}_{\la;
j_1,\dots,j_{s(\la)}}$ satisfies the following conditions:
\begin{enumerate}

\item It is a $\{0,+1,-1\}$-valued vector.

\item Its support is contained in the set
$\bigcup_{\nu=0}^{2^M-1}S^{2^{n+1}\nu}[1, 2^{n+1}]$.

\item\label{I:PosNeg} The set $P=P^{(n)}_{\la;
j_1,\dots,j_{s(\la)}}$ of coordinates where the value of  $x^{(n)}_{\la;
j_1,\dots,j_{s(\la)}}$ is equal to $1$ is contained in
$\bigcup_{\nu=0}^{2^M-1}S^{2^{n+1}\nu}I^{(n)}$, and the set  of coordinates with values equal to $-1$
 is contained in the
complement of $\bigcup_{\nu=0}^{2^M-1}S^{2^{n+1}\nu}I^{(n)}$.

\item\label{I:Symm} The values of the element $x^{(n)}_{\la;
j_1,\dots,j_{s(\la)}}$ on the  set
\[\left(\bigcup_{\nu=0}^{2^M-1}S^{2^{n+1}\nu}[1,
2^{n+1}]\right)\setminus \left(\bigcup_{\nu=0}^{2^M-1}S^{2^{n+1}\nu}I^{(n)}\right)\]
are uniquely determined by its values on the set
$\bigcup_{\nu=0}^{2^M-1}S^{2^{n+1}\nu} I^{(n)}$.

Namely: for each
 $j\in S^{2^{n+1}\nu}([1, 2^{n+1}]\setminus
I^{(n)})$ the value on the $j$-th coordinate of $x^{(n)}_{\la;
j_1,\dots,j_{s(\la)}}$ is equal to  $(-1)$ times    the value on the
$(2^{n+1}(\nu+1)-j+2^{n+1}\nu+1)$-th coordinate of $x^{(n)}_{\la;
j_1,\dots,j_{s(\la)}}$ (by definition, this property is clearly satisfied by $x_1^{(n)}$). Intuitively this property says that the negative part of each block of
$x^{(n)}_{\la; j_1,\dots,j_{s(\la)}}$ can be obtained from the
positive part by the composition of  the   negation and the symmetric reflection about the
center of the block.

That is,  for each $\nu$, if in the $\nu$-th block $P(\nu)\DEF
P^{(n)}_{\la; j_1,\dots,j_{s(\la)}}\cap S^{2^{n+1}\nu}I^{(n)}$
then we have
\begin{equation}\lb{I:Symm:E}
x^{(n)}_{\la; j_1,\dots,j_{s(\la)}}\cdot \one_{S^{2^{n+1}\nu}[1,
2^{n+1}]}=\one_{P(\nu)}-\one_{\RRnu(P(\nu))},
\end{equation}
where $\RRnu$ is the symmetric reflection of the $\nu$-th block about the
center of the block, that is for every $j\in  S^{2^{n+1}\nu}[1, 2^{n+1}]$,  $\RRnu(j)\DEF2^{n+1}(\nu+1)-j+2^{n+1}\nu+1$.
\end{enumerate}

By the properties in items \ref{I:PosNeg} and \ref{I:Symm},  the
restriction of the vector $x^{(n)}_{\la; j_1,\dots,j_{s(\la)}}$ to
$S^{2^{n+1}\nu}I^{(n)}$ is a $\{0,1\}$-valued vector and
$x^{(n)}_{\la; j_1,\dots,j_{s(\la)}}$ is completely determined by
all such restrictions. Therefore it is enough to define the set
$P(\nu)=P^{(n)}_{\la; j_1,\dots,j_{s(\la)}}\cap
S^{2^{n+1}\nu}I^{(n)}$, i.e.  the part of the support of
$x^{(n)}_{\la; j_1,\dots,j_{s(\la)}}$ that is contained in
$S^{2^{n+1}\nu}I^{(n)}$, for each $\nu\in\{0,\dots,2^M-1\}$.

For all $\la\in B_n$, $(j_1,\dots,j_{s(\la)})\in \PP$, and
$\nu\in\{0,\dots,2^M-1\}$,  we define the  set $P(\nu)\DEF
P^{(n)}_{\la; j_1,\dots,j_{s(\la)}}\cap S^{2^{n+1}\nu}I^{(n)}$
through the following finite inductive procedure.

We use the notation ${\disp
\lambda=\sum_{\alpha=0}^{s(\lambda)}\frac{\lambda_\alpha}{2^{\al}}}$,
for the binary decomposition of $\la$,  and, for all
$\al\in\{1,\dots,s(\la)\}$, we denote by
$J_\al\DEF(j_1,\dots,j_\al)$, i.e. $J_\al$ is the initial segment
of length $\al$ of the   $s(\la)$-tuple
$(j_1,\dots,j_{s(\la)})$ that labels the branch of the vertex
$v^{(n)}_{\la; j_1,\dots,j_{s(\la)}}$.
\begin{enumerate}
\item (Initial Step) If  $\la_0=1$, we let $P(\nu)\DEF C_0\DEF
S^{2^{n+1}\nu}I^{(n)}$ and STOP.

It is clear that this happens if and only if the vertex is
$v_1^{(n)}$,  $\la=1$, and $s(\la)=0$. Notice that in this case we
have $\card(P(\nu))=\card(C_0)=2^n\la=2^{n-0}\la_0$.

Otherwise, that is, if $s(\la)>0$ (and $\lambda_0=0$), we set
$\al=1$
 $C_0=\emptyset$, (note that  $\card(C_0)=0=2^{n-0}\la_0$)
and go to Step \ref{I:2}.

\item\label{I:2} (Inductive step) Suppose that the following are given:
$\al\ge 1$,  a set $C_{\al-1}\subseteq S^{2^{n+1}\nu}I^{(n)}$ with $\card(C_{\al-1})=\sum_{i=0}^{\al-1}  2^{n-i}\la_i$, and numbers
$\e_1,\dots,\e_{\al-1}\in\{-1,1\}$  such that
\begin{equation}\lb{disjoint-interval}
C_{\al-1}\cap S^{2^{n+1}\nu}I^{(n)}_{\e_1,\dots,\e_{\al-1}}=\emptyset.
\end{equation}
(If $\alpha=1$, we mean that
$I^{(n)}_{\e_1,\dots,\e_{\al-1}}=I^{(n)}$.)

Then
\begin{enumerate}
\item If $\lambda_\al=1$, we set
\begin{equation*}\lb{defC}
C_\al\DEF C_{\al-1}\cup
S^{2^{n+1}\nu}I^{(n)}_{\e_1,\dots,\e_{\al-1},r_{J_\al}(\nu)}.
\end{equation*}

Note that $S^{2^{n+1}\nu}I^{(n)}_{\e_1,\dots,\e_{\al-1},r_{J_\al}(\nu)}\subseteq
S^{2^{n+1}\nu}I^{(n)}_{\e_1,\dots,\e_{\al-1}}$, and thus,
by \eqref{disjoint-interval} and \eqref{cardI}, we have
\begin{equation}\lb{cardC}
\begin{split}
\card(C_\al)&=\card(C_{\al-1})+2^{n-\al}=\sum_{i=0}^{\al}  2^{n-i}\la_i.
\end{split}
\end{equation}
\begin{enumerate}
\item If $\al=s(\lambda)$ we set $P(\nu)=C_\al$ and STOP.

\item\label{I:ii} If $\al<s(\lambda)$ we set
$\e_{\al}=-r_{J_\al}(\nu)$. Since the intervals
$I^{(n)}_{\e_1,\dots,\e_{\al-1},+}$ and
$I^{(n)}_{\e_1,\dots,\e_{\al-1},-}$ are disjoint (see
\eqref{cardI} and the paragraph immediately preceding it), we see
that, in this case, \eqref{disjoint-interval} holds when $\al-1$
is replaced by $\al$. Therefore we can go back to the beginning of
the inductive step for $\al+1$.
\end{enumerate}
\item If $\lambda_\al=0$ (and thus, necessarily, $\al<s(\lambda)$)
we define $C_\al\DEF C_{\al-1}$, and  $\e_{\al}=r_{J_\al}(\nu)$.
Then
\begin{equation*}
\begin{split}
\card(C_\al)&=\card(C_{\al-1})+2^{n-\al}\cdot 0=\sum_{i=0}^{\al}  2^{n-i}\la_i,
\end{split}
\end{equation*}
and, since $I^{(n)}_{\e_1,\dots,\e_{\al-1},\e_\al}\subseteq I^{(n)}_{\e_1,\dots,\e_{\al-1}}$, we see that also in this case, \eqref{disjoint-interval} holds when $\al-1$ is replaced by $\al$. Therefore we can go back to the beginning of the inductive step for $\al+1$.
\end{enumerate}
\end{enumerate}

\begin{observation} \lb{card-support}
Observe that the above inductive procedure will stop precisely  when $\al=s(\lambda)$, and thus, by \eqref{cardC}, we have
\begin{equation}\lb{cardP}
\begin{split}
\card(P(\nu))&=\sum_{i=0}^{s(\la)}  2^{n-i}\la_i=2^{n}\la.
\end{split}
\end{equation}

Moreover, the inductive procedure is defined in such a way, that for every
$\al\le s(\la)$, we have
\begin{equation}\lb{suppalpha}
\begin{split}
P(\nu)&\subseteq C_\al \cup
S^{2^{n+1}\nu}I^{(n)}_{\e_1,\dots,\e_\al}.
\end{split}
\end{equation}
\end{observation}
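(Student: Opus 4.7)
The plan is to verify both parts of Observation~\ref{card-support} by a direct audit of the inductive procedure described in Section~\ref{S:VerbDef}. For the termination claim, I would note that the procedure halts in only two places: the Initial Step (which fires iff $\la_0 = 1$, i.e.\ iff $\la = 1$, in which case $s(\la) = 0$), and Step 2(a)(i) (which fires iff $\la_\al = 1$ and $\al = s(\la)$). In every other branch, namely Step 2(a)(ii) and Step 2(b), the algorithm explicitly returns to the inductive step with $\al$ replaced by $\al + 1$, so it cannot halt earlier. Since the convention from \eqref{dyadic} forces $\la_{s(\la)} = 1$, Step 2(a)(i) necessarily triggers exactly when $\al = s(\la)$, establishing that termination occurs precisely at that level. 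The cardinality formula then follows by combining the one-line recursion \eqref{cardC} with the fact that Step 2(b) leaves $\card(C_\al)$ unchanged: each iteration with $\la_\al = 1$ contributes $2^{n-\al}$ to $\card(C_\al)$ while iterations with $\la_\al = 0$ contribute nothing, so summing over $\al \in \{0,\dots,s(\la)\}$ yields $\card(P(\nu)) = \sum_{i=0}^{s(\la)} 2^{n-i}\la_i = 2^n \la$ by the dyadic expansion \eqref{dyadic}.

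For the inclusion $P(\nu) \subseteq C_\al \cup S^{2^{n+1}\nu} I^{(n)}_{\e_1,\dots,\e_\al}$ at every $\al \le s(\la)$, my plan is a downward induction on $\al$ with base case $\al = s(\la)$, where $P(\nu) = C_{s(\la)}$ and the inclusion is immediate. For the inductive step, I would assume the inclusion at $\al + 1$ and split on the value of $\la_{\al+1}$: when $\la_{\al+1} = 1$, the new contribution $C_{\al+1} \setminus C_\al = S^{2^{n+1}\nu} I^{(n)}_{\e_1,\dots,\e_\al, r_{J_{\al+1}}(\nu)}$ lies inside $S^{2^{n+1}\nu} I^{(n)}_{\e_1,\dots,\e_\al}$ by the nested construction of the intervals, and when $\la_{\al+1} = 0$ we have $C_{\al+1} = C_\al$. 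In both subcases $I^{(n)}_{\e_1,\dots,\e_{\al+1}} \subseteq I^{(n)}_{\e_1,\dots,\e_\al}$, so that $C_{\al+1} \cup S^{2^{n+1}\nu} I^{(n)}_{\e_1,\dots,\e_{\al+1}} \subseteq C_\al \cup S^{2^{n+1}\nu} I^{(n)}_{\e_1,\dots,\e_\al}$, which completes the induction.

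There is no serious obstacle here: the observation is a bookkeeping consequence of the construction. The only care needed is to invoke the convention $\la_{s(\la)} = 1$ at the correct moment and to track that at each level the pair $I^{(n)}_{\cdots, +}$ and $I^{(n)}_{\cdots, -}$ partitions the parent interval $I^{(n)}_{\cdots}$, which is exactly what is recorded in \eqref{cardI} and the paragraph immediately preceding it.
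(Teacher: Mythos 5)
Your proposal is correct and follows essentially the same route as the paper: the paper records the cardinality invariant $\card(C_\al)=\sum_{i=0}^{\al}2^{n-i}\la_i$ inside the inductive procedure itself (via \eqref{cardC} and the case $\la_\al=0$), notes that the only STOP occurs when $\al=s(\la)$ because $\la_{s(\la)}=1$, and reads off \eqref{suppalpha} from the fact that every set adjoined at a stage beyond $\al$ is a subinterval of $S^{2^{n+1}\nu}I^{(n)}_{\e_1,\dots,\e_\al}$. Your downward induction is just an explicit formalization of that last point, so there is nothing to correct.
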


\begin{observation} \lb{card-edge}
Observe that if two vertices are joined by an
edge in $D_{n,k}$, then one of them has the form $v^{(n)}_{\la;
j_1,\dots,j_{n}}$, where
$\la=\sum_{\alpha=0}^n\frac{\la_\alpha}{2^{\alpha}}$ is such that
$\lambda_n=1$ (i.e. $s(\la)=n$); and the other vertex has the form
 $v^{(n)}_{\mu;
j_1,\dots,j_{s(\mu)}}$, where $|\mu-\lambda|=2^{-n}$, and
$(j_1,\dots,j_{s(\mu)})$ is the initial segment of the label of the branch of the vertex
$v^{(n)}_{\la;
j_1,\dots,j_{n}}$.

Assume $\la>\mu$. If we follow the definition above for these
vertices we see that the positive support of the difference
$x^{(n)}_{\la; j_1,\dots,j_{n}}-x^{(n)}_{\mu;
j_1,\dots,j_{s(\mu)}}$ in the $\nu$-th block  is an interval of
length $1$. The same holds in the case when $\la<\mu$
and we subtract the vectors in the opposite order.

Therefore, by the \ESA\ property of the basis, for endpoints of every edge in
$D_{n,k}$,  we get
\begin{equation}\label{E:DistEdge}
\begin{split}
\left\|x^{(n)}_{\la; j_1,\dots,j_{n}}-x^{(n)}_{\mu;
j_1,\dots,j_{s(\mu)}}\right\|&=2^{-n}\|x_1^{(n)}\|\\&=
\|x_1^{(n)}\|\cdot d_{D_{n,k}}
\left(v^{(n)}_{\la; j_1,\dots,j_{n}},v^{(n)}_{\mu;
j_1,\dots,j_{s(\mu)}}\right).
\end{split}
\end{equation}

Since the metric in $D_{n,k}$ is the shortest path distance,
the equality \eqref{E:DistEdge} implies that for
any two vertices in $D_{n,k}$ we have:
\begin{equation}\label{E:Above}
\|x^{(n)}_{\la; j_1,\dots,j_{s(\la)}}-x^{(n)}_{\mu;
i_1,\dots,i_{s(\mu)}}\|\le
\|x_1^{(n)}\|\cdot  d_{D_{n,k}}\left(v^{(n)}_{\la;
j_1,\dots,j_{s(\la)}}, v^{(n)}_{\mu;
i_1,\dots,i_{s(\mu)}}\right).
\end{equation}
By \eqref{E:DistEdge} and \eqref{E:Above},  our map is Lipschitz
with constant $\|x_1^{(n)}\|$.
\end{observation}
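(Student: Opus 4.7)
The structural claim about edge endpoints is immediate from Observation~\ref{O:vertex}: any edge of $D_{n,k}$ joins vertices whose levels differ by $2^{-n}$, so at least one endpoint has $s(\la)=n$. Tracing through the recursive definition of $D_{n,k}$, such an endpoint is a ``new'' midpoint vertex introduced in the final subdivision, so its neighbor is one of the two endpoints of the subdivided edge of $D_{n-1,k}$; the labeling convention of Section~\ref{S:DescrD} then ensures that the branch label of the neighbor is an initial segment of $(j_1,\dots,j_n)$.

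The combinatorial heart of the observation is the comparison of the inductive constructions of $P^{(n)}_{\la;j_1,\dots,j_n}(\nu)$ and $P^{(n)}_{\mu;j_1,\dots,j_{s(\mu)}}(\nu)$ from Section~\ref{S:VerbDef}. Because the two vertices share their branch prefix, both procedures execute identically, adding the same intervals $S^{2^{n+1}\nu}I^{(n)}_{\e_1,\dots,\e_\al}$ and generating the same signs $\e_\al$, up through the first index at which the digits of $\la$ and $\mu$ differ. If $\la>\mu$, so $\mu=\la-2^{-n}$, then $\la_\al=\mu_\al$ for all $\al<n$, the $\mu$-procedure halts at $\al=s(\mu)$, and the $\la$-procedure continues with $\la_\al=0$ for $s(\mu)<\al<n$ and $\la_n=1$, adding only one size-$1$ interval at step $n$. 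If $\la<\mu$ with $\mu=\la+2^{-n}$, the addition triggers a carry chain that toggles some $t\ge 1$ digits; one then checks that the $2^{t-1}+2^{t-2}+\cdots+2^0=2^t-1$ coordinates that $\la$ accumulates at steps $n-t+1,\dots,n$ all sit inside the size-$2^t$ interval that $\mu$ adds at step $n-t$, leaving a symmetric difference of exactly one coordinate inside the positive portion $S^{2^{n+1}\nu}I^{(n)}$. Together with property~\eqref{I:Symm:E}, this forces the $\nu$-th block of $x^{(n)}_{\la;j_1,\dots,j_n}-x^{(n)}_{\mu;j_1,\dots,j_{s(\mu)}}$ (taken in the order in which the larger vertex is on the left) to be supported on exactly two coordinates, one carrying $+1$ in the positive portion and the other carrying $-1$ in its reflection.

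Once the block structure is established, the norm computation runs parallel to the derivation of \eqref{norm-top}. After concatenating the $2^M$ blocks, the nonzero entries of the difference form the pattern $+1,-1,+1,-1,\dots,+1,-1$; applying IS to condense the intervening zeros and ESA to merge the runs of like signs gives
\begin{equation*}
\left\|x^{(n)}_{\la;j_1,\dots,j_n}-x^{(n)}_{\mu;j_1,\dots,j_{s(\mu)}}\right\|=\Big\|\sum_{\nu=0}^{2^M-1}S^{2\nu}(e_1-e_2)\Big\|=2^{-n}\|x_1^{(n)}\|,
\end{equation*}
the last equality by \eqref{norm-top}. Since edges of $D_{n,k}$ have length $2^{-n}$, this is exactly \eqref{E:DistEdge}. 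For arbitrary $v,w\in D_{n,k}$, applying the triangle inequality one edge at a time along any geodesic path yields \eqref{E:Above} and hence the Lipschitz bound with constant $\|x_1^{(n)}\|$.

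The main obstacle is the carry-chain case in paragraph two: one must verify that although the two inductive procedures diverge at different depths depending on the carry length $t$, the nested structure of the intervals $I^{(n)}_{\e_1,\dots,\e_\al}$ conspires via the telescoping identity $2^{t-1}+2^{t-2}+\cdots+2^0=2^t-1$ to leave exactly one coordinate of discrepancy. Once this counting is done, everything else follows from the \ESA\ property and the triangle inequality.
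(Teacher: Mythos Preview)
Your proof is correct and follows the same line as the paper's: both arguments run the inductive procedure of Section~\ref{S:VerbDef} for the two endpoints and observe that the positive supports differ in a single coordinate per block, then invoke IS/ESA and \eqref{norm-top}. The paper simply asserts that ``the same holds in the case when $\la<\mu$'' without spelling out the carry-chain analysis; your explicit verification that the $2^t-1$ coordinates accumulated by the $\la$-procedure lie inside the size-$2^t$ interval added by $\mu$ (because the $\la$-procedure sets $\e_{n-t}=r_{J_{n-t}}(\nu)$, matching the sign $\mu$ uses to select its interval) is a useful elaboration of what the paper leaves to the reader.
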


\subsubsection{The formula for the map}

The described above inductive procedure leads to the following formula for
 $x^{(n)}_{\la;
j_1,\dots,j_{s(\la)}}$, where $\la\notin\{0,1\}$, and
$\lambda=\sum_{\alpha=1}^{s(\lambda)}\lambda_\alpha2^{-\alpha}$
is  the binary
representation of $\lambda$:
\begin{equation}\label{E:DirDef}
\begin{split}
 x^{(n)}_{\la;j_1,\dots,j_{s(\la)}} =\sum_{\nu=0}^{2^{M}-1}
S^{2^{n+1}\nu}&\left(\sum_{\alpha=1}^{s(\lambda)}\lambda_\alpha
h^{(n)}_{\theta(\lambda,j_1,\dots,j_\alpha,\nu)}\right),
\end{split}
\end{equation}
where, for each $\al\le s(\la)$, $\theta(\lambda, j_1,\dots,j_\alpha,\nu)$
is an $\alpha$-tuple of $\pm1$'s defined by
\begin{equation*}\label{def-theta}
\begin{split}
\theta(\lambda,j_1,\dots,j_\alpha,\nu)&
\\=\Big((-1)^{\la_1}&r_{(j_1)}(\nu),\dots, (-1)^{\la_{\al-1}}
r_{(j_1,\dots, j_{\alpha-1})}(\nu),
r_{(j_1,\dots,j_\alpha)}(\nu)\Big).
\end{split}
\end{equation*}
In the case when $\alpha=1$, we mean that for all
$\la\notin\{0,1\}$,
$\theta(\la,j_1,\nu)=r_{(j_1)}(\nu).$

Note that the  $\alpha$-tuples $\theta(\lambda,
j_1,\dots,j_\alpha,\nu)$ are defined in such a way that whenever
$\la_\al\ne 0$, and   $(j_1,\dots,j_\alpha)$ is an initial segment
of $(j_1,\dots,j_{\bar{\alpha}})$, then, for every $\nu$, the
elements $h^{(n)}_{\theta(\lambda,j_1,\dots,j_\alpha,\nu)}$ and
$h^{(n)}_{\theta(\lambda,j_1,\dots,j_{\bar{\alpha}},\nu)}$ are
disjoint.

\subsection{An estimate for the distortion}\label{S:EstDistor}

Since, by \eqref{E:DistEdge} and \eqref{E:Above}, our mapping is
Lipschitz with the Lipschitz constant equal to $\|x_1^{(n)}\|$, it
remains to prove that there exists $K\le 8$, so that, for all
$v^{(n)}_{\la; j_1,\dots,j_{s(\la)}}$, $v^{(n)}_{\mu;
i_1,\dots,i_{s(\mu)}}$ in $D_{n,k}$,
\begin{equation}\label{E:below}
\|x^{(n)}_{\la; j_1,\dots,j_{s(\la)}}-x^{(n)}_{\mu;
i_1,\dots,i_{s(\mu)}}\|\ge\frac{\|x_1^{(n)}\|}K\,
d_{D_{n,k}}\left(v^{(n)}_{\la; j_1,\dots,j_{s(\la)}},
v^{(n)}_{\mu; i_1,\dots,i_{s(\mu)}}\right),
\end{equation}
where $\lambda$ and $\mu$ have the   binary
decompositions,
$\lambda=\sum_{\alpha=0}^{s(\lambda)}2^{-\alpha}\lambda_\alpha$,
and $\mu=\sum_{\alpha=0}^{s(\mu)}2^{-\alpha}\mu_\alpha$, respectively.

To estimate the distortion of the embedding we will simultaneously derive  the  formulas for the distances between
vertices in $D_{n,k}$, and the estimates for the distances between their images.

First,  observe that, by Observation~\ref{O:DistTopBot}, if $v_\mu^{(n)}$ is the bottom or the top vertex of the diamond $D_{n,k}$, i.e. if $\mu\in\{0,1\}$, then for every vertex $v^{(n)}_{\la; j_1,\dots,j_{s(\la)}}$, with $\la\ne\mu$, we have
\begin{equation*}\label{E:DistCase0}
d_{D_{n,k}}\left(v^{(n)}_{\la; j_1,\dots,j_{s(\la)}},
v^{(n)}_{\mu}\right)=|\lambda-\mu|.
\end{equation*}

On the other hand, by Observation~\ref{card-support}, by
\eqref{norm-top}, and by IS and ESA of the basis we get, when
$\mu=0$,
\begin{equation}\lb{bottom-v}
\begin{split}
\|x^{(n)}_{\la; j_1,\dots,j_{s(\la)}}-0\|&=2^n\la\Big\|\sum_{\nu=0}^{2^{M}-1}
S^{2^{n+1}\nu}(e_1-e_2)\Big\|=\la\|x^{(n)}_1\|\\
&=\|x^{(n)}_1\|\cdot d_{D_{n,k}}\left(v^{(n)}_{\la; j_1,\dots,j_{s(\la)}},
v^{(n)}_{\mu}\right),
\end{split}
\end{equation}
and, when $\mu=1$,
\begin{equation}\lb{top-v}
\begin{split}
\|x^{(n)}_1-x^{(n)}_{\la; j_1,\dots,j_{s(\la)}}\|&=2^n(1-\la)\Big\|\sum_{\nu=0}^{2^{M}-1}
S^{2^{n+1}\nu}(e_1-e_2)\Big\|=(1-\la)\|x^{(n)}_1\|\\
&=\|x^{(n)}_1\|\cdot d_{D_{n,k}}\left(v^{(n)}_{\la; j_1,\dots,j_{s(\la)}},
v^{(n)}_{\mu}\right).
\end{split}
\end{equation}

Thus, when at least one of the vertices is the bottom or the top
vertex of the diamond $D_{n,k}$, inequality \eqref{E:below} holds
with $K=1$.

We will say that a path in $D_{n,k}$ is a {\it direct vertical
path} if it is a subpath of a geodesic path that connects  the
bottom and the top vertex  in $D_{n,k}$.

Next, suppose that  distinct vertices $v^{(n)}_{\la;
j_1,\dots,j_{s(\la)}}$ and $v^{(n)}_{\mu;i_1,\dots,i_{s(\mu)}}$
are connected by a direct vertical path. Then   $\la\ne\mu$, say
$\la>\mu$. By the triangle inequality, \eqref{bottom-v}, and
\eqref{top-v} we obtain
\begin{equation*}
\begin{split}
\|x^{(n)}_{\la; j_1,\dots,j_{s(\la)}}-x^{(n)}_{\mu;i_1,\dots,i_{s(\mu)}}\|
&\ge \|x^{(n)}_{1}-x^{(n)}_{0}\|-\|x^{(n)}_{\mu;i_1,\dots,i_{s(\mu)}}-x^{(n)}_{0}\|-\|x^{(n)}_{1}-x^{(n)}_{\la; j_1,\dots,j_{s(\la)}}\|\\
&=\|x^{(n)}_1\|\left(1-\mu-(1-\la)\right)\\
&=|\la-\mu|\cdot\|x^{(n)}_1\|.
\end{split}
\end{equation*}
Thus, by Observation~\ref{O:DistTopBot} and the upper estimate \eqref{E:Above}, we get
\begin{equation}\lb{vertical}
\begin{split}
\|x^{(n)}_{\la; j_1,\dots,j_{s(\la)}}-x^{(n)}_{\mu;i_1,\dots,i_{s(\mu)}}\|
&=|\la-\mu|\cdot\|x^{(n)}_1\|\\
&=\|x^{(n)}_1\|\cdot d_{D_{n,k}}\left(v^{(n)}_{\la; j_1,\dots,j_{s(\la)}},
v^{(n)}_{\mu;i_1,\dots,i_{s(\mu)}}\right).
\end{split}
\end{equation}

Therefore, whenever  vertices $v^{(n)}_{\la;
j_1,\dots,j_{s(\la)}}$ and $v^{(n)}_{\mu;i_1,\dots,i_{s(\mu)}}$
are on a direct vertical path, then \eqref{E:below} holds with
$K=1$ (thus we think of our  embedding as {\it vertically
isometric with the multiplicative constant $\|x^{(n)}_1\|$}, that
is  every pair of vertices $u,v$ of $D_{n,k}$ connected by a
direct vertical path is mapped onto a pair of points in the space
$X$ with  distance equal to the original distance
$d_{D_{n,k}}(u,v)$ multiplied by the constant $\|x^{(n)}_1\|$).

In general, we consider  two different vertices $v^{(n)}_{\la; j_1,\dots,j_{s(\la)}}$ and $v^{(n)}_{\mu;
i_1,\dots,i_{s(\mu)}}$   in $D_{n,k}$, with $\la,\mu\notin\{0,1\}$.
We define the set $B=\{\al\le\min\{s(\la),s(\mu)\}: i_\al\ne j_\al\hbox{ or
}\lambda_\alpha\ne\mu_\alpha\}$, and
\begin{equation*}
\be=\begin{cases}\min B,\ \ &{\text {\rm if}}\ \ B\ne \emptyset,\\
\min\{s(\la),s(\mu)\}+1,  &{\text {\rm if}}\ \ B= \emptyset.
\end{cases}
\end{equation*}

If $B\ne \emptyset$, we define $\delta$ to be the largest
integer that does not exceed $\beta-1$ and is such that either
$\lambda_\delta=\mu_\de=1$ or $\delta=0$ (observe that, since  $\la,\mu\notin\{0,1\}$,
we have $\lambda_0=\mu_0=0$), and we define
$\omega=\sum_{\al=0}^{\delta} \frac{\lambda_\al}{2^\al}=\sum_{\al=0}^{\delta} \frac{\mu_\al}{2^\al}$. We
consider the vertex $v^{(n)}_{\omega;
j_1,\dots,j_{\delta}}=v^{(n)}_{\omega; i_1,\dots,i_{\delta}}$, or
the vertex $v_0^{(n)}$ if  $\om=0$ (note that $\om=0$ \wtw \ $\de=0$).

In the remainder of this argument we will  denote the vertices
$v^{(n)}_{\la; j_1,\dots,j_{s(\la)}}$, $v^{(n)}_{\mu;
i_1,\dots,i_{s(\mu)}}$, and $v^{(n)}_{\omega;
j_1,\dots,j_{\delta}}$ by $v_\la$, $v_\mu$, and $v_\omega$,
respectively, and the corresponding images in $X$, by
$x_\la$, $x_\mu$, and $x_\omega$, respectively. For all $\nu\in\{0,\dots,2^M\}$,  we will also use $P_\la(\nu)$, $P_\mu(\nu)$, and $P_\omega(\nu)$, to denote
the subsets of the $\nu$-th block, where the coordinates of the elements $x_\la$, $x_\mu$, and $x_\omega$, respectively, are equal to 1.

Note that   $v_\om$ is the vertex at the highest possible level so
that there exist direct vertical paths passing through $v_\om$ and
connecting the bottom of $D_{n,k}$ to $v_\la$ and $v_\mu$,
respectively. In particular, $v_\om$
 is connected to both $v_\la$ and $v_\mu$ by  direct vertical paths (that are disjoint with the exception of the vertex $v_\om$).

There are several cases to consider:

\begin{enumerate}

\item\label{I:C0} $B= \emptyset$.

\item\label{I:C1} $B\ne \emptyset$, $i_\beta=j_\beta$, and $\la_\beta\ne\mu_\beta$.

\item \label{I:C2a} $B\ne \emptyset$,  $i_\beta\ne j_\beta$, and
$\la_\beta=\mu_\beta=0$.

\item \label{I:C2b} $B\ne \emptyset$,   $i_\beta\ne j_\beta$, and
$\la_\beta=\mu_\beta=1$.

\item \label{I:C3} $B\ne \emptyset$, $i_\beta\ne j_\beta$ and
$\la_\beta\ne\mu_\beta$.
\end{enumerate}

As a part of the proof below, we  will analyze the geometric
meaning of each case.

\noindent{\bf Case~\ref{I:C0}:} $B= \emptyset$.

Since the vertices $v_\la$ and $v_\mu$ are distinct, the condition
$B= \emptyset$ implies that $s(\la)\ne s(\mu)$, say $s(\mu)<
s(\la)$, and $(i_1,\dots,i_{s(\mu)})$ is an initial segment of
$(j_1,\dots,j_{s(\la)})$, that is, the vertices $v_\mu$ and
$v_\la$ are connected by a direct vertical path. Thus  in
Case~\ref{I:C0}, by \eqref{vertical}, inequality \eqref{E:below}
holds with $K=1$.

\noindent{\bf Case~\ref{I:C1}:} $B\ne \emptyset$, $i_\beta=j_\beta$, and $\la_\beta\ne\mu_\beta$.

 Without loss of generality we may and do assume that $\lambda_\beta=1$
and $\mu_\beta=0$. The definitions of $\be$ and $\de$, together
with $\la_\be=1$ and  $\mu_\beta=0$, imply that then
$R_\be(\la)=\om+2^{-\be}$ and $R_\be(\mu)=\om$. Thus, by
Observation~\ref{O:NestSubdiam}, $v_\mu$ belongs to the subdiamond
$\Sigma_{{\beta}}(v_\mu)$, of height $2^{-\beta}$, with the bottom
at $v_\omega$ and the top at $v_{\omega+2^{-\beta};
i_1,\dots,i_\beta}=v_{\omega+2^{-\beta}; j_1,\dots,j_\beta}$.
Moreover, $v_\lambda$ belongs to the subdiamond
$\Sigma_{\beta}(v_\la)$ of height $2^{-\beta}$ whose bottom is at
$v_{\omega+2^{-\beta}; i_1,\dots,i_\beta} =v_{\omega+2^{-\beta};
j_1,\dots,j_\beta}$. Therefore, by Observation~\ref{O:DistTopBot},
we get that $ d_{D_{n,k}}(v_{\la}, v_{\mu})=|\lambda-\mu|, $ and
that  the vertices $v_\mu$ and $v_\la$ are on a direct vertical
path. Thus in Case~\ref{I:C1}, by \eqref{vertical}, inequality
\eqref{E:below} holds with $K=1$.

\noindent{\bf Case~\ref{I:C2a}:}  $B\ne \emptyset$,  $i_\beta\ne j_\beta$, and
$\la_\beta=\mu_\beta=0$.

 In this case $R_\be(\la)=R_\be(\mu)=\om$, and, by
 Observation~\ref{O:NestSubdiam},  the vertices
$v_\mu$ and $v_\lambda$ are in two different subdiamonds of height
$2^{-\beta}$ both with the bottom at $v_\omega$, and since
$\la_\beta=\mu_\beta=0$,  the distance of each of them to $v_\om$
is less than  $2^{-\beta}$, cf. Figure~\ref{F:Case3-4}. Since the smallest subdiamond that contains both
$v_\mu$ and $v_\lambda$ has height $2^{-(\beta-1)}$, the
  shortest path joining $v_\lambda$ and $v_\mu$
passes through $v_\omega$. By Observation~\ref{O:DistTopBot}, the
length of this path is $(\lambda-\om)+(\mu-\omega)$, so
\begin{equation}\label{E:DistCase2A}
d_{D_{n,k}}(v_{\la},
v_{\mu})
=(\lambda-\om)+(\mu-\omega)=
\sum_{\al=\beta+1}^{s(\la)}\frac{\la_\al}{2^\al}+\sum_{\al=\beta+1}^{s(\mu)}\frac{\mu_\al}{2^\al}.
\end{equation}

\begin{figure}[h]
\hspace{-10mm}\includegraphics[scale=0.8]{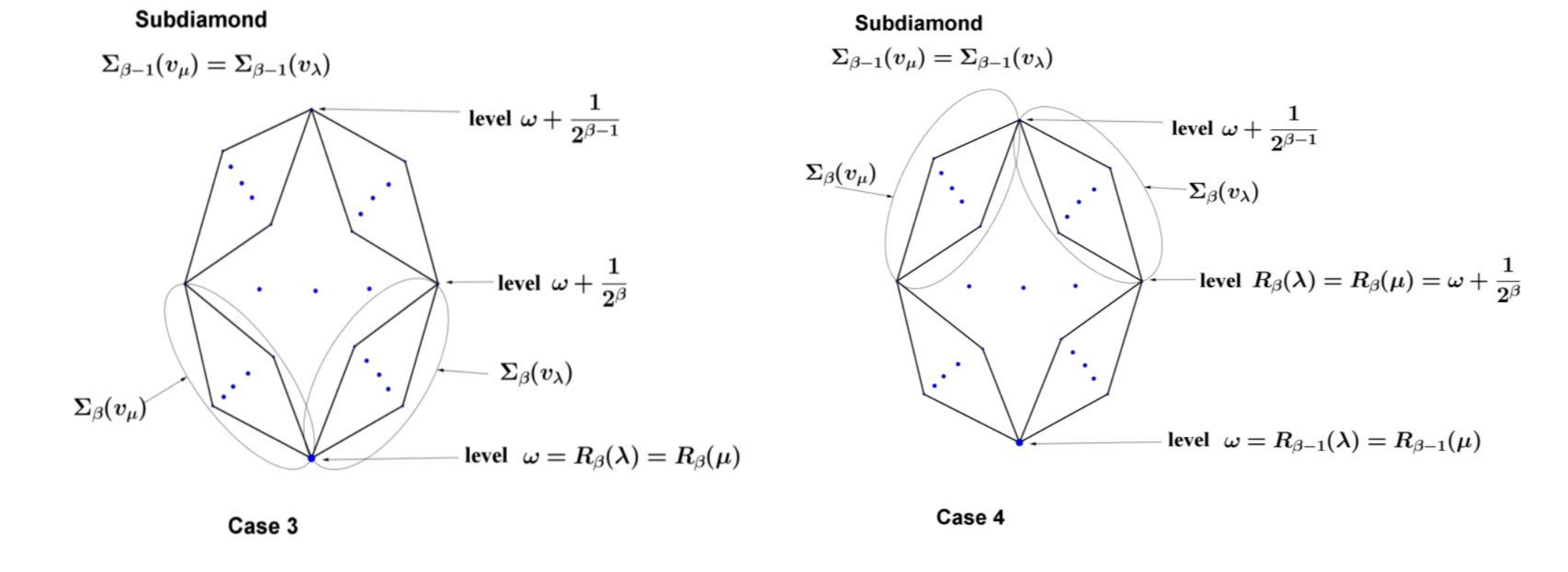}
\caption{Subdiamonds in Case 3 and Case 4.}\label{F:Case3-4}
\end{figure}

In  Case~\ref{I:C2a}, the relative position of the sets
 $P_\la(\nu)$ and $P_\mu(\nu)$ does depend on $\nu$ or, more precisely, on the values of $r_{(j_1,\dots,j_\beta)}(\nu)$ and
 $r_{(i_1,\dots,i_\beta)}(\nu)$.

We suppose, \buoo, that $\la\ge \mu$.

 Let $G$ be the set  consisting  of all
 $\nu$'s for which $r_{(j_1,\dots,j_\beta)}(\nu)=-1$
and $r_{(i_1,\dots,i_\beta)}(\nu)=1$. Note that, by the
independence of the  Rademacher functions, the cardinality the set
$G$ is equal to one fourth of the cardinality of the set of all
$\nu$'s, that is to $2^{M-2}$.

By the SA property of the basis, and since \eqref{I:Symm:E}
implies  that  the sum of all coordinates of $x_\la$ and of
$x_\mu$ in every block is equal to zero, we can replace all
entries in any selected blocks of the
 element $x_\la-x_\mu$ by zeros, without increasing the norm, in particular, we
 have
 \begin{equation}\lb{SAcor}
\begin{split}
\|x_{\la}-x_{\mu}\|&=\Big\|\sum_{\nu=0}^{2^{M}-1} (x_{\la}-x_{\mu})\cdot
\one_{S^{2^{n+1}\nu}[1,2^{n+1}]}\Big\|\\
&\ge \Big\|\sum_{\nu\in G} (x_{\la}-x_{\mu})\cdot
\one_{S^{2^{n+1}\nu}[1,2^{n+1}]}\Big\|.
\end{split}
\end{equation}

Hence we now concentrate on the form of the element $x_\la-x_\mu$  in the blocks whose numbers belong to the set $G$.
By the inductive definition of the sets
 $P_\la(\nu)$, $P_\mu(\nu)$, and $P_\om(\nu)$, we see that, for all $\nu$,
 $P_\om(\nu)\subseteq P_\mu(\nu)\cap P_\la(\nu),$
 and that for every $\nu\in G$, the sets $P_\mu(\nu)\setminus P_\om(\nu)$ and
 $P_\la(\nu)\setminus P_\om(\nu)$ are disjoint. Moreover, for every $\nu\in G$,
since $\lambda_\beta=\mu_\beta=0$, by \eqref{suppalpha} and the
definition of $G$, we have
$$P_\la(\nu)\setminus P_\om(\nu)\subseteq S^{2^{n+1}\nu}I^{(n)}_{\e_1,\dots,\e_{\beta-1},-1} \ \ {\text{\rm and}}\ \ P_\mu(\nu)\setminus  P_\om(\nu)\subseteq S^{2^{n+1}\nu}I^{(n)}_{\e_1,\dots,\e_{\beta-1},1},$$
where, by the definition of $\be$, the numbers $\e_1,\dots,\e_{\beta-1}\in\{-1,1\}$
are the same for both $x^{(n)}_{\la; j_1,\dots,j_{s(\la)}}$ and $x^{(n)}_{\mu; i_1,\dots,i_{s(\mu)}}$.

Therefore, by \eqref{I:Symm:E}, for every $\nu$ we have
\begin{equation*}
 \begin{split}
(x_{\la}&-
x_{\mu})\cdot  \one_{S^{2^{n+1}\nu}[1, 2^{n+1}]}=\\
&\Big(\one_{P_\la(\nu)\setminus P_\om(\nu)}-
\one_{P_\mu(\nu)\setminus P_\om(\nu)}\Big)-
\Big(\one_{\RRnu(P_\la(\nu)\setminus P_\om(\nu))}-
\one_{\RRnu(P_\mu(\nu)\setminus P_\om(\nu))}\Big).
\end{split}
\end{equation*}

Thus, and by \eqref{cardP}, if we consider the restriction of the
difference $x_\lambda-x_\mu$ to the interval
$S^{2^{n+1}\nu}[1,2^{n+1}]$ and omit all zeros, we get a vector of the
following form: first it will have $2^n(\la-\om)$ entries with
values equal to $+1$, then it will have $2^n(\mu-\om)$ entries
equal to $-1$, then it will have $2^n(\mu-\om)$ entries equal to
$+1$, and finally it will have $2^n(\la-\om)$ entries equal to
$-1$:
\begin{align}\lb{form-diff}
\underbrace{+\cdots \dots+}_{2^{n}(\la-\om)}
\underbrace{-\cdots\dots-}_{2^{n}(\mu-\om)}\underbrace{+\cdots\dots+}_{2^{n}(\mu-\om)}\underbrace{-\cdots
\dots-}_{2^{n}(\la-\om)}.
\end{align}

Recall that we assumed that $\lambda\ge\mu$. For each $\nu\in G$,
we will replace by zeros the values  on the coordinates of
$(x_{\la}-x_{\mu})$  in the smallest subinterval of
$S^{2^{n+1}\nu}[1, 2^{n+1}]$ that contains the set
$(P_\mu(\nu)\setminus P_\om(\nu))\cup   \RRnu(P_\mu(\nu)\setminus
P_\om(\nu))$ (the ``central'' set in the diagram
\eqref{form-diff}). Since the sum of all values of  the
coordinates of $(x_{\la}-x_{\mu})$ on this interval is equal to 0,
by the SA  property of the basis, this replacement does not
increase the norm of the element. Thus,
  by \eqref{SAcor} and the ESA property of the basis,   we get
\begin{equation*}\lb{case2a1}
\begin{split}
\|x_{\la}-x_{\mu}\|&\ge \Big\|\sum_{\nu\in G} (x_{\la}-x_{\mu})\cdot
\one_{S^{2^{n+1}\nu}[1,2^{n+1}]}\Big\|\\
&\ge \Big\|\sum_{\nu\in G}
\Big(\one_{P_\la(\nu)\setminus P_\om(\nu)}-
\one_{\RRnu(P_\la(\nu)\setminus P_\om(\nu))}\Big)\Big\|\\
&\stackrel{\rm by \eqref{cardP}}{=}2^n(\la-\om)\Big\|\sum_{\nu=0}^{2^{M-2}-1}
S^{2\nu}(e_1-e_2)\Big\|\\
&\stackrel{(\ast)}{\ge}\frac14(\la-\om)\|x_1^{(n)}\|\\
&\stackrel{(\ast\ast)}{\ge} \frac18\Big((\la-\om)+(\mu-\om)\Big)\|x_1^{(n)}\|\\
&\stackrel{\rm \eqref{E:DistCase2A}}{=}
\frac18\|x_1^{(n)}\|d_{D_{n,k}}(v_\la,v_\mu),
\end{split}
\end{equation*}
where the inequality $(\ast)$ holds by \eqref{norm-top}, and by an application of the triangle inequality similarly as in \eqref{triangle}, and the  inequality $(\ast\ast)$
 holds since $(\mu-\om)\le  (\la-\om)$.

Hence, in Case~\ref{I:C2a}, \eqref{E:below} holds with $K=8$.

\noindent{\bf Case \ref{I:C2b}:} $B\ne \emptyset$,   $i_\beta\ne j_\beta$, and
$\la_\beta=\mu_\beta=1$.

In this case, we also have   $R_{\be}(\la)=R_{\be}(\mu)$, but this
common value is greater than $\om$. Since $\la_\beta=\mu_\beta=1$,
we have that
$s(R_{\be}(\la)+2^{-\be})=s(R_{\be}(\mu)+2^{-\be})<\be$, and thus
by Observation~\ref{O:NestSubdiam},  the subdiamonds
$\Sigma_\be(v_\mu)$ and $\Sigma_\be(v_\la)$, both of height
$2^{-\beta}$, have the same top vertex at the level
$R_\be(\la)+\frac{1}{2^{\be}}=R_\be(\mu)+\frac{1}{2^{\be}}$, and
on the branch labelled by an initial segment of
$(j_1,\dots,j_{\be-1})=(i_1,\dots,i_{\be-1})$, cf.
Figure~\ref{F:Case3-4}.   Note that the distance from this vertex
to either $v_\la$ and $v_\mu$ is smaller than or equal to
$2^{-\beta}$. The bottom vertices of  the subdiamonds
$\Sigma_\be(v_\mu)$ and $\Sigma_\be(v_\la)$ are
$v^{(n)}_{R_{\be}(\mu),j_1,\dots,j_{\be}}$ and
$v^{(n)}_{R_{\be}(\la),i_1,\dots,i_{\be}}$, respectively, that are
different vertices (at the same level) since $i_\beta\ne j_\beta$.
Thus the smallest subdiamond that contains both  $v_\mu$ and
$v_\lambda$ has height $2^{-(\beta-1)}$. Hence there exists
  a shortest path joining $v_\lambda$ and $v_\mu$ that
passes through the common top vertex of the subdiamonds $\Sigma_\be(v_\mu)$
and $\Sigma_\be(v_\la)$, that is
 at the level
$R_\be(\la)+\frac{1}{2^{\be}}=R_\be(\mu)+\frac{1}{2^{\be}}$, and is connected by a direct vertical path to both $v_\lambda$ and $v_\mu$. By
Observation~\ref{O:DistTopBot}, the length of this path is
$(R_{\be}(\la)+\frac{1}{2^{\be}}- \lambda)+(R_{\be}(\mu)+\frac{1}{2^{\be}}-\mu)$, so
\begin{equation}\label{E:DistCase2B}
\begin{split}
d_{D_{n,k}}(v_{\la},
v_{\mu})
&=\frac{2}{2^{\beta}}-
\left[(\lambda-R_{\be}(\la))+(\mu-R_{\be}(\mu))\right]\\
&=\left(\frac{1}{2^{\beta}}-\sum_{\al=\beta+1}^{s(\la)}\frac{\la_\al}{2^\al}\right)+\left(\frac{1}{2^{\beta}}-\sum_{\al=\beta+1}^{s(\mu)}\frac{\mu_\al}{2^\al}\right).
\end{split}
\end{equation}

As in Case~\ref{I:C2a},  \buoo, we   assume that $\la\ge \mu$, and we look first at
the set $G$ consisting of the values of $\nu$ for which $r_{(j_1,\dots,j_\beta)}(\nu)=-1$
and $r_{(i_1,\dots,i_\beta)}(\nu)=1$.

By the inductive definition of the sets $P_\la(\nu)$ and
$P_\mu(\nu)$, and by the definition of $\be$, we see that,  for
every $\nu\in G$, the set $C_{\be-1}$ (cf.
equation~\eqref{disjoint-interval}), and  the numbers
$\e_1,\dots,\e_{\beta-1}\in\{-1,1\}$, are the same for both
$x_{\la}$ and $x_{\mu}$.
  Moreover, for every $\nu\in G$,
since $\lambda_\beta=\mu_\beta=1$, by  \eqref{suppalpha} and the
definitions of $G$ and $C_\beta$, we have
\begin{align*}
&C_\be(x_\la)=
C_{\be-1}\cup S^{2^{n+1}\nu}I^{(n)}_{\e_1,\dots,\e_{\beta-1},-1}, \ \ {\text{\rm and}}\ \
C_\be(x_\mu)=
C_{\be-1}\cup S^{2^{n+1}\nu}I^{(n)}_{\e_1,\dots,\e_{\beta-1},1},\\
&P_\la(\nu)\setminus C_\be(x_\la)\subseteq S^{2^{n+1}\nu}I^{(n)}_{\e_1,\dots,\e_{\beta-1},1} \subseteq C_\be(x_\mu),\\
&P_\mu(\nu)\setminus C_\be(x_\mu)\subseteq S^{2^{n+1}\nu}I^{(n)}_{\e_1,\dots,\e_{\beta-1},-1} \subseteq C_\be(x_\la).
\end{align*}

Therefore if we omit zeros in the block number $\nu$, the
difference $x_\lambda-x_\mu$ will be nonzero on four intervals: it
starts with
$2^{n}\left(\frac1{2^\beta}-\sum_{\alpha=\beta+1}^{s(\mu)}\frac{\mu_\alpha}{2^\alpha}\right)$
entries with values equal to $+1$ (corresponding to the set
$S^{2^{n+1}\nu}I^{(n)}_{\e_1,\dots,\e_{\beta-1},-1}\setminus
P_\mu(\nu)$),
 then it will contain
$2^{n}\left(\frac1{2^\beta}-\sum_{\alpha=\beta+1}^{s(\lambda)}\frac{\lambda_\alpha}{2^\alpha}\right)$
entries with values equal to $-1$ (corresponding to the set
$S^{2^{n+1}\nu}I^{(n)}_{\e_1,\dots,\e_{\beta-1},1}\setminus
P_\la(\nu)$), then it will contain the symmetric images of the
first two sets under the reflection $\RRnu$, which consist of
$2^{n}\left(\frac1{2^\beta}-\sum_{\alpha=\beta+1}^{s(\lambda)}\frac{\lambda_\alpha}{2^\alpha}\right)$
entries equal to $+1$, and finally
$2^{n}\left(\frac1{2^\beta}-\sum_{\alpha=\beta+1}^{s(\mu)}\frac{\mu_\alpha}{2^\alpha}\right)$
entries equal to $-1$:
\begin{align*}
\underbrace{+\cdots\dots+}_{2^{n}(\frac{1}{2^{\beta}}-
(\mu-R_{\be}(\mu)))}\underbrace{-\cdots
\dots-}_{2^{n}(\frac{1}{2^{\beta}}- (\lambda-R_{\be}(\la)))}
\underbrace{+\cdots
\dots+}_{2^{n}(\frac{1}{2^{\beta}}-(\lambda-R_{\be}(\la)))}\underbrace{-\cdots\dots-}_{2^{n}(\frac{1}{2^{\beta}}-
(\mu-R_{\be}(\mu)))}.
\end{align*}

Recall that we assumed that $\lambda\ge\mu$. Similarly, as in
Case~\ref{I:C2a}, for each $\nu\in G$, we will replace by zeros
the values  on the coordinates of $(x_{\la}-x_{\mu})$  in the
smallest subinterval of $S^{2^{n+1}\nu}[1, 2^{n+1}]$ that contains
the two ``central'' sets  above, that contain
$2^{n}\left(\frac1{2^\beta}-\sum_{\alpha=\beta+1}^{s(\lambda)}\frac{\lambda_\alpha}{2^\alpha}\right)$
entries equal to $-1$, and the same amount of entries equal to
$+1$.
 Since   the sum of all replaced values  is equal to 0, by the
SA  property of the basis, this replacement does not increase the norm of the element. Thus, and by \eqref{SAcor}, we get
\begin{equation*}\lb{case2b1}
\begin{split}
\|x_{\la}-x_{\mu}\|&\ge \Big\|\sum_{\nu\in G} (x_{\la}-x_{\mu})\cdot
\one_{S^{2^{n+1}\nu}[1,2^{n+1}]}\Big\|\\
&\ge 2^{n}\left(\frac1{2^\beta}-\sum_{\alpha=\beta+1}^{s(\mu)}\frac{\mu_\alpha}{2^\alpha}\right)\Big\|\sum_{\nu\in G}
S^{2^{n+1}\nu}(e_1-e_2)\Big\|\\
&\stackrel{(\ast)}{\ge}\frac14\left(\frac1{2^\beta}-\sum_{\alpha=\beta+1}^{s(\mu)}\frac{\mu_\alpha}{2^\alpha}\right)\|x_1^{(n)}\|\\
&\stackrel{(\ast\ast)}{\ge} \frac18\|x_1^{(n)}\|d_{D_{n,k}}(v_\la,v_\mu),
\end{split}
\end{equation*}
where the inequality $(\ast)$ holds by \eqref{norm-top}, and by an application of the triangle inequality similarly as in \eqref{triangle}, and the  inequality $(\ast\ast)$
 holds by \eqref{E:DistCase2B}, since $\mu\le  \la$.

Hence, in Case~\ref{I:C2b}, \eqref{E:below} holds with $K=8$.

\noindent{\bf Case \ref{I:C3}:} $B\ne \emptyset$, $i_\beta\ne j_\beta$ and
$\la_\beta\ne\mu_\beta$.

Without loss of generality we    assume
that $\lambda_\beta=1$ and $\mu_\beta=0$.
Then $\la\ge \mu$ and $R_{\be}(\la)=R_{\be}(\mu)+\frac{1}{2^\be}$.
By the definition of $\be$, and since $\mu_\beta=0$, $R_{\be-1}(\la)=R_{\be-1}(\mu)=R_{\be}(\mu)$, and the subdiamonds   $\Sigma_{\be-1}(v_\mu)$ and $\Sigma_{\be-1}(v_\la)$ coincide, we will denote the top of this subdiamond by
$\overline{t}$, and the bottom by $\overline{b}$, cf. Figure~\ref{F:Case5}.

\begin{figure}[h]
\centering
\includegraphics[scale=0.8]{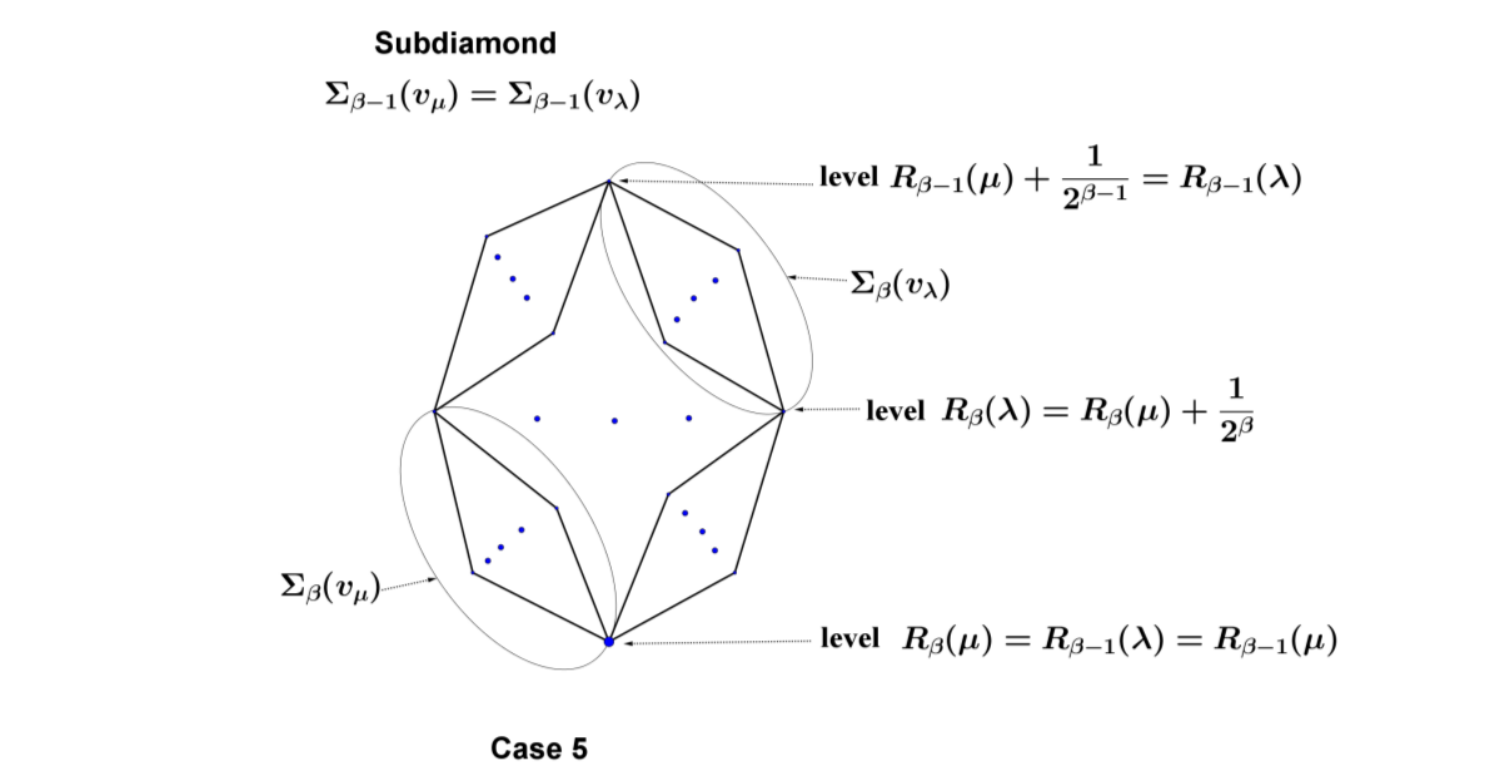}
\caption{Subdiamonds in Case 5.}\label{F:Case5}
\end{figure}

By
Observation~\ref{O:NestSubdiam}, $\overline{b}$ is also the bottom vertex of the subdiamond $\Sigma_\be(v_\mu)$, and the top vertex of  $\Sigma_\be(v_\mu)$ is at the same level as the bottom vertex of the subdiamond $\Sigma_\be(v_\la)$. Note however that since $i_\beta\ne j_\beta$, the top vertex of  $\Sigma_\be(v_\mu)$  and the bottom vertex of the subdiamond $\Sigma_\be(v_\la)$ cannot coincide.
 Therefore
the shortest path between $v_\mu$ and $v_\lambda$ has to pass  either
through $\overline{t}$ or $\overline{b}$. Thus
\begin{equation}\label{E:DistCase3}
\begin{split}
d_{D_{n,k}}(v_{\la},
v_{\mu})&=\min\left\{\sum_{\al=\beta+1}^{s(\mu)}\frac{\mu_\al}{2^\al}+\sum_{\al=\beta}^{s(\la)}\frac{\la_\al}{2^\al},\
\frac{1}{2^{\be-1}}-\left(\sum_{\al=\beta+1}^{s(\mu)}\frac{\mu_\al}{2^\al}+\sum_{\al=\beta}^{s(\la)}\frac{\la_\al}{2^\al}\right)\right\}\\
&=\min\left\{\sum_{\al=\beta}^{s(\mu)}\frac{\mu_\al}{2^\al}+\sum_{\al=\beta}^{s(\la)}\frac{\la_\al}{2^\al},\
\frac{1}{2^{\be-1}}-\left(\sum_{\al=\beta}^{s(\mu)}\frac{\mu_\al}{2^\al}+\sum_{\al=\beta}^{s(\la)}\frac{\la_\al}{2^\al}\right)\right\}.
\end{split}
\end{equation}

To estimate the distance between $x_\la$ and $x_\mu$, as in
previous cases, we look  at the set $G$ consisting of the values
of $\nu$ for which $r_{(j_1,\dots,j_\beta)}(\nu)=-1$ and
$r_{(i_1,\dots,i_\beta)}(\nu)=1$. Then, by the inductive
definition of $x_\mu$ and $x_\la$, and by the definition of $\be$,
we obtain that $C_{\be-1}(x_\mu)=C_{\be-1}(x_\la)\subseteq
P_\la(\nu)\cap P_\mu(\nu)$,
$S^{2^{n+1}\nu}I^{(n)}_{\e_1,\dots,\e_{\beta-1},-1}\subseteq
P_\la(\nu)\setminus C_{\be-1}(x_\la)$, and $P_\mu(\nu)\setminus
C_{\be-1}(x_\mu)\subseteq
S^{2^{n+1}\nu}I^{(n)}_{\e_1,\dots,\e_{\beta-1},+1}.$ Thus
$$S^{2^{n+1}\nu}I^{(n)}_{\e_1,\dots,\e_{\beta-1},-1}\subseteq P_\la(\nu)\setminus P_\mu(\nu).$$
Therefore we have
\begin{equation*}\lb{case5}
\begin{split}
\|x_{\la}-x_{\mu}\|&\ge \Big\|\sum_{\nu\in G} (x_{\la}-x_{\mu})\cdot
\one_{S^{2^{n+1}\nu}[1,2^{n+1}]}\Big\|\\
&\ge 2^{n}\frac1{2^\beta}\Big\|\sum_{\nu\in G}
S^{2^{n+1}\nu}(e_1-e_2)\Big\|\\
&\ge\frac18\frac1{2^{\beta-1}}\|x_1^{(n)}\|\\
&\stackrel{{\rm by \eqref{E:DistCase3}}}{\ge} \frac18\|x_1^{(n)}\|d_{D_{n,k}}(v_\la,v_\mu).
\end{split}
\end{equation*}

This completes the proof of Theorem \ref{T:Main}.

\section{The set of  diamonds of all finite branchings does
not satisfy the factorization assumption \eqref{factor} -- Proof
of Theorem~\ref{no-diamonds-in-X-Delta}}\label{S:NoMultBrXdelta}

The goal of this section is to prove
Theorem~\ref{no-diamonds-in-X-Delta}. Our approach  is the
following. First we show that if for a fixed $k$ and all
$n\in\bbN$,  the diamond graphs $\{D_{n,k}\}_{n=1}^\infty$ can be
embedded into $\ell_1$ with uniformly bounded distortions and so
that the embeddings can be
  factored between the summing and the
$\ell_1$-norm with uniformly bounded factorization constants (see the discussion of \eqref{factor} in the Introduction), then there exists a ``vertically almost isometric'' embedding of  the diamond $D_{1,k}$ into $\ell_1$ that satisfies \eqref{factor} with the same factorization constant, see
Lemma~\ref{stretch} for precise formulation.
 In some contexts arguments of this type are called ``self-improvement'' arguments. Their first usage in Banach space
theory is apparently due to James \cite{Jam64a}, and in non-linear
setting  to Johnson and Schechtman \cite{JS09}. This argument is
by now standard, Lee and Raghavendra \cite[Lemma~4.1]{LR10} prove
essentially the same lemma as ours, but since their terminology is
different, we decided to enclose the following elementary proof
for convenience of the readers.

Next, we prove (Lemma \ref{reduction}) that an embedding of
$D_{1,k}$ into $\ell_1$ that satisfies \eqref{factor} with the
factorization constant $C$ can be further ``improved'' so that it
resembles closely a standard embedding of $D_{1,2}$ into $\ell_1$,
that is, for some $N\in\bbN$, the top vertex of $D_{1,k}$  is
mapped onto a vector in $\ell_1^N$ whose every coordinate is 1 or
$-1$, the bottom vertex is mapped onto $0$, and all other vertices
of $D_{1,k}$ are mapped onto elements of  $\ell_1^N$ such that
their pairwise $c_0$-distance does not exceed 1, and their
pairwise summing norm distance
  is at least $\alpha N$, where $\alpha>0$ depends
only on the factorization constant of the original embedding, see
Lemma~\ref{reduction} for the precise statement.

After reaching this point we use the Ramsey theorem to show that
the number of branches $k$ in this situation is bounded from above by a constant that depends only on $\alpha$ and not on $N$, see
Lemma~\ref{bound-on-branches}.
An outline of this step is
described at the beginning of the proof of Lemma~\ref{bound-on-branches}.

We use the standard notation $c_{00}$ for the linear space of
infinite sequences of real numbers with finite support. We shall
use the following norms on $c_{00}$: the $\ell_1$-norm
$\|\cdot\|_1$ and the summing norm $\|\cdot\|_s$.

\begin{lemma}\label{stretch}  Suppose that there exist  $C> 1$ and $k\in \mathbb{N}$   such that for
every $n\in\mathbb{N}$ there exists an embedding
$f_n:D_{n,k}\to\ell_1$ satisfying
\begin{equation}\label{E:LemStr}\forall u,v\in  D_{n,k}\quad
\|f_n(u)-f_n(v)\|_1\le d_{D_{n,k}}(u,v)< C\cdot
\|f_n(u)-f_n(v)\|_s.\end{equation}

Then for every $\eta\in(0,1)$ there exist nonzero elements
$\{x_i\}_{i=0}^k$ in $\ell_1$,
 so that,  for all $i,j\in\{1,\dots,k\}$ with $i\ne j$, we have
\begin{align}\label{midpoint}
\frac{(1-\eta)}{2}\|x_0\|_1 &\le\|x_i\|_1\le \frac{(1+\eta)}{2}\|x_0\|_1,\\
\frac{(1-\eta)}{2}\|x_0\|_1 &\le\|x_0-x_i\|_1\le
\frac{(1+\eta)}{2}\|x_0\|_1,\label{midpoint2}
\end{align}
and
\begin{equation}\label{far}
\|x_i-x_j\|_s>\frac{1}{C}\|x_i-x_j\|_1\ge\frac{1}{C^2}\|x_0\|_1,
\end{equation}
\end{lemma}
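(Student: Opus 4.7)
The idea is to locate, inside some $D_{n,k}$ with $n$ sufficiently large, an edge $e=uv$ of some $D_{m,k}$ substructure on which $f_n$ is nearly vertically isometric, i.e.\ $\|f_n(u)-f_n(v)\|_1\ge (1-\eta')\,d_{D_{n,k}}(u,v)$ for a suitable $\eta'=\eta'(\eta)>0$. Having located such an edge, together with its $k$ midpoints $w_1,\dots,w_k\in D_{n,k}$, I will set
\[
x_0 := f_n(v)-f_n(u), \qquad x_i := f_n(w_i)-f_n(u)\quad (i=1,\dots,k),
\]
and read off the required inequalities from the triangle inequality and \eqref{E:LemStr}.

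For any edge $e=uv$ of a $D_{m,k}$ substructure at height $h:=d(u,v)$ with midpoints $w_1,\dots,w_k$, define the expansion $c(e):=\|f_n(u)-f_n(v)\|_1/h\in[1/C,1]$, where the lower bound comes from $\|\cdot\|_s\le\|\cdot\|_1$ combined with \eqref{E:LemStr}. Applying the triangle inequality to the two halves $uw_j$ and $w_jv$ and normalizing by $h$ yields the midpoint inequality
\[
2\,c(e)\le c(uw_j)+c(w_jv)\qquad\text{for every }j,
\]
so that the average of $c$ over the $2k$ ``children'' of $e$ is at least $c(e)$. Consequently the mean $\bar c_m$ of $c(e)$ over all edges at level $m$ in $D_{n,k}$ is non-decreasing in $m$ and bounded above by $1$. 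The self-improvement step of the plan uses the following structural feature: for any sub-edge $e=uv$ of a $D_{m,k}$ substructure of $D_{n,k}$, the restriction of $f_n$ to the sub-diamond based at $e$, after rescaling $D_{n,k}$-distances by $1/h$, produces an embedding of $D_{n-m,k}$ into $\ell_1$ satisfying \eqref{E:LemStr} with the same constant $C$ and with $\ell_1$-top-bottom distance exactly $c(e)$. Combining this self-similarity with the monotonicity of $\bar c_m$ and with the hypothesis that embeddings exist for \emph{every} $n$ (so that the limiting expansion can be iteratively boosted, in the spirit of the James-type self-improvement \cite{Jam64a}), one shows that for any $\eta'>0$ there exist $n$ and a sub-edge $e=uv$ of a $D_{m,k}$ substructure of $D_{n,k}$ with $c(e)\ge 1-\eta'$.

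Given such an edge, the verification of \eqref{midpoint}--\eqref{far} is routine. With $h=d(u,v)$ one has $(1-\eta')h\le\|x_0\|_1\le h$, while $\|x_i\|_1,\|x_0-x_i\|_1\le h/2$ and $\|x_i\|_1+\|x_0-x_i\|_1\ge\|x_0\|_1\ge(1-\eta')h$; hence both $\|x_i\|_1$ and $\|x_0-x_i\|_1$ lie in $[(\tfrac12-\eta')h,\,h/2]$. Dividing by $\|x_0\|_1\in[(1-\eta')h,h]$ and choosing $\eta'=\eta/2$ yields \eqref{midpoint} and \eqref{midpoint2}. Finally, \eqref{E:LemStr} applied to the pair $(w_i,w_j)$, which has $D_{n,k}$-distance $h$, gives $\|x_i-x_j\|_s>h/C\ge\|x_i-x_j\|_1/C$, and since $\|x_0\|_1\le h$ also $\|x_i-x_j\|_s>h/C\ge\|x_0\|_1/C^2$, establishing \eqref{far}. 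The main obstacle is the self-improvement step: the monotonicity $\bar c_m\uparrow$ alone does not force $\bar c_m\to 1$, so the argument must exploit both the availability of embeddings for every $n$ and the rescaling-to-sub-diamond construction in an essential way.
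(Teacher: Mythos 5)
There is a genuine gap, and it is exactly at the step you flag as ``the main obstacle'': the claim that for every $\eta'>0$ one can find $n$ and a sub-edge $e=uv$ with expansion $c(e)=\|f_n(u)-f_n(v)\|_1/d(u,v)\ge 1-\eta'$ is not just unproven, it is false under the hypotheses of the lemma. The constant $C$ is fixed, but nothing prevents the embeddings $f_n$ from contracting \emph{all} distances by a fixed factor: if $g_n$ is any family satisfying \eqref{E:LemStr} with constant $C_0$, then $f_n=\tfrac12 g_n$ satisfies \eqref{E:LemStr} with $C=2C_0$, and every expansion $c(e)$ is at most $\tfrac12$, uniformly in $n$ and in the sub-edge. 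So no amount of iteration can push $c(e)$ near $1$; the natural limiting quantity is only bounded below by $1/C$. Moreover, your fallback (the monotone level averages $\bar c_m$ inside one fixed embedding) cannot repair this: the midpoint/triangle inequality only gives a \emph{lower} bound on the sum $c(uw_j)+c(w_jv)\ge 2c(e)$, so even at a level where the averages nearly stabilize the two halves can remain badly unbalanced, and you obtain no upper bound on the individual quantities $\|x_i\|_1$, $\|x_0-x_i\|_1$ relative to $\|x_0\|_1$ --- which is precisely what \eqref{midpoint} and \eqref{midpoint2} require.

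The paper's proof avoids both problems by making the conclusion relative to $\|x_0\|_1$ rather than to the metric, and by running the self-improvement on an \emph{extremal} quantity taken over all admissible embeddings: $t(n)\DEF\sup_{f_n}\|f_n(v_0^n)-f_n(v_{-1}^n)\|_1$, the supremum over all $f_n$ satisfying \eqref{E:LemStr}. Since $D_{n+1,k}$ contains an isometric copy of $D_{n,k}$ with the same top and bottom, $t(n)$ is non-increasing, bounded below by $1/C$, hence convergent to some $t$ (which need not be close to $1$). One then picks $n$ with $t\le t(n)\le t(n-1)\le(1+\delta)t$ and an embedding $f_n$ with $f_n(v_{-1}^n)=0$ nearly attaining $t(n)$. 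The missing \emph{upper} bounds now come for free: each half-subdiamond over $(v_{-1}^n,v_i^n)$ or $(v_i^n,v_0^n)$ is a $\tfrac12$-scaled copy of $D_{n-1,k}$, so its rescaled restriction is itself a competitor for $t(n-1)$, giving $\|x_i\|_1,\|x_0-x_i\|_1\le\tfrac12 t(n-1)\le\tfrac12(1+\delta)t$, while the triangle inequality and near-extremality of $\|x_0\|_1\ge(1-\delta)t(n)$ give the matching lower bounds; choosing $\delta<\eta/5$ yields \eqref{midpoint}--\eqref{midpoint2}. Your verification of \eqref{far} (using $d(w_i,w_j)=d(u,v)$ and \eqref{E:LemStr} twice) is correct and is essentially what the paper does, but the core of the lemma is the extremal-sup argument above, which your proposal does not supply.
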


\begin{lemma}\label{reduction} Suppose that there exist  $C> 1$ and $k\in \mathbb{N}$   such that for
every $n\in\mathbb{N}$ there exists an embedding
$f_n:D_{n,k}\to\ell_1$ satisfying
$$\forall u,v\in D_{n,k}\quad \|f_n(u)-f_n(v)\|_1\le d_{D_{n,k}}(u,v)< C\cdot \|f_n(u)-f_n(v)\|_s.$$

Let $\al=\frac{1}{2C^2}>0$. Then, there exist $N\in\bbN$  and
elements $z_i=\sum_{m=1}^\infty z_{im}e_m\in c_{00}$, for
$i\in\{1,\dots,k\}$, so that
 \begin{align}
\label{suppz}
&\forall i\in\{1,\dots,k\}& \supp(z_i) &\subseteq \{1,\dots,N\},\\
\label{zdiff}
&\forall i,j\in\{1,\dots,k\}\ \forall m\in\{1,\dots,N\}&   |z_{im}-z_{jm}|&\le 1,\\
\label{lfarz} &\forall i, j\in\{1,\dots,k\}, \  i\ne j, &
\|z_i-z_j\|_s&\ge\al N,
\end{align}

and
\begin{equation}\label{alphaN}
\al N\ge 2.
\end{equation}
\end{lemma}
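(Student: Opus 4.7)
The plan is to start from the output of Lemma~\ref{stretch} applied with a small parameter $\eta$, and then to transform the vectors $\{x_i\}_{i=0}^k$ into the required $\{z_i\}_{i=1}^k$ by three moves: a coordinate-wise clamping of each midpoint $x_i$ to lie ``between'' $0$ and $x_0$; a rational approximation of $x_0$; and a block-equal spreading. After rescaling so that $\|x_0\|_1=1$, the final $z_i$ will be $q$ times the spread-out version of the clamped midpoint, where $q$ is a common denominator for the coordinates of a rational approximation of $x_0$.

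First, I would fix $\eta\in(0,1/(2C^2))$, apply Lemma~\ref{stretch}, and replace each $x_i$ by its coordinate-wise truncation $x_i'$ to the closed interval with endpoints $0$ and $x_{0m}$. The pointwise identity $|a|+|b-a|-|b|=2\,\mathrm{dist}(a,[0,b])$ (with $[b,0]$ in the negative case) combined with \eqref{midpoint} and \eqref{midpoint2} gives $\|x_i-x_i'\|_1\le \eta/2$; since the summing norm is dominated by the $\ell_1$-norm, the separation \eqref{far} survives as $\|x_i'-x_j'\|_s\ge 1/C^2-\eta>\alpha$.

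Next, at arbitrarily small $\ell_1$-cost (with the truncation redone on the new vector), I would perturb $x_0$ to have finitely many nonzero, rational coordinates, write $x_{0m}=p_m/q$ with $p_m\in\bbZ\setminus\{0\}$ and a common denominator $q$, and then spread: replace each index $m$ by a block of $|p_m|$ consecutive new indices, setting $\tilde x_{0,(m,\ell)}=\mathrm{sign}(p_m)/q$ and $\tilde x_{i,(m,\ell)}=x_{im}'/|p_m|$ for $\ell=1,\dots,|p_m|$. Because the values are constant within each block, the partial sums of $\tilde x_i'-\tilde x_j'$ interpolate linearly between consecutive partial sums of $x_i'-x_j'$, so the summing norm is preserved exactly: $\|\tilde x_i'-\tilde x_j'\|_s=\|x_i'-x_j'\|_s$. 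Setting $z_i:=q\,\tilde x_i'$ and $N:=\sum_m|p_m|=q\|x_0\|_1=q$, one has $\supp(z_i)\subseteq\{1,\dots,N\}$, which is \eqref{suppz}; since $x_{im}'$ and $x_{jm}'$ both lie in the closed interval with endpoints $0$ and $p_m/q$, one gets $|z_{i,(m,\ell)}-z_{j,(m,\ell)}|=q|x_{im}'-x_{jm}'|/|p_m|\le 1$, which is \eqref{zdiff}; and $\|z_i-z_j\|_s=q\|x_i'-x_j'\|_s\ge q\alpha=\alpha N$, which is \eqref{lfarz}. To enforce \eqref{alphaN}, multiply numerator and denominator of each $x_{0m}$ by an integer $r$ large enough that $rq\ge 2/\alpha$ before spreading; this inflates $N$ to $rq$ without affecting the other verifications.

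The delicate step is the clamping in the first move: one must verify that the pointwise identity $|a|+|b-a|-|b|=2\,\mathrm{dist}(a,[0,b])$ (an elementary case analysis) converts the near-equality in the triangle inequality captured by \eqref{midpoint}--\eqref{midpoint2} into a small $\ell_1$-error in the clamping, which then transfers to a small summing-norm error via $\|\cdot\|_s\le\|\cdot\|_1$. Once the midpoints lie coordinate-wise between $0$ and $x_0$, the rational approximation and block-equal spreading contain no further analytic content and reduce to bookkeeping.
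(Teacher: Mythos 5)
Your proposal is correct and follows essentially the same route as the paper: apply Lemma~\ref{stretch}, clamp each midpoint coordinatewise between $0$ and $x_0$ (the paper's computation with the sets $A_i,B_i,C_i,D_i$ is exactly your distance identity), replace $x_0$ by a rational/integer version and spread each coordinate into a block of equal entries --- an operation preserving both the $\ell_1$ and summing norms (the paper justifies this via the operator $T$ and ESA, you via linear interpolation of partial sums) --- then transfer the separation using $\|\cdot\|_s\le\|\cdot\|_1$ and enlarge $N$ to get $\al N\ge 2$. The remaining differences (clamping before rather than after spreading, and rescaling at the end rather than at the start) are cosmetic.
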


\begin{lemma}\label{bound-on-branches}
For every $\al\in(0,1)$, there exists a natural number $k(\al)$,
so that if there exist   $k, N\in\bbN$, and elements
$z_i=\sum_{m=1}^\infty z_{im}e_m\in c_{00}$, for
$i\in\{1,\dots,k\}$, satisfying conditions
\eqref{suppz}--\eqref{alphaN}, then
$$k\le k(\al).$$
\end{lemma}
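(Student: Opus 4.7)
The plan is to translate the data to a packing problem for $1$-Lipschitz real functions on $\{0,1,\dots,N\}$ and then apply the multicolor Ramsey theorem to reduce pairwise separation at a varying coordinate to pairwise separation at a single fixed coordinate, where a one-dimensional volume count bounds $k$.

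For $i\in\{1,\dots,k\}$ I would define $h_i\colon\{0,1,\dots,N\}\to\bbR$ by $h_i(0)\DEF 0$ and $h_i(t)\DEF\sum_{m=1}^t(z_{im}-z_{1m})$ for $t\ge 1$. By \eqref{zdiff} each increment $h_i(t)-h_i(t-1)=z_{it}-z_{1t}$ has absolute value at most one, so $h_i$ is $1$-Lipschitz and in particular $|h_i(t)|\le t\le N$. Since $h_i-h_j$ is the prefix-sum process of $z_i-z_j$, hypothesis \eqref{lfarz} rephrases as: for every $i\ne j$ there exists $\tau_{ij}\in\{0,1,\dots,N\}$ with $|h_i(\tau_{ij})-h_j(\tau_{ij})|\ge \al N$.

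Next I would partition $\{0,1,\dots,N\}$ into $r\DEF\lceil 8/\al\rceil$ consecutive intervals $J_1,\dots,J_r$, each of diameter at most $\al N/4$; here the hypothesis $\al N\ge 2$ from \eqref{alphaN} is what absorbs the $+1$ rounding error into this bound. Color each unordered pair $\{i,j\}$ by the index $l\in\{1,\dots,r\}$ with $\tau_{ij}\in J_l$. Set $m\DEF\lfloor 4/\al\rfloor+2$ and let $k(\al)\DEF R_r(m)-1$, where $R_r(m)$ is the multicolor Ramsey number (the smallest $n$ such that every $r$-coloring of the edges of $K_n$ contains a monochromatic $K_m$). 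If $k>k(\al)$, choose a monochromatic clique $S\subseteq\{1,\dots,k\}$ with $|S|=m$ and common color $l$, and fix any $a\in J_l$. For every $i,j\in S$, the $1$-Lipschitz property applied at $a,\tau_{ij}\in J_l$ yields
\[
|h_i(a)-h_j(a)|\ge |h_i(\tau_{ij})-h_j(\tau_{ij})|-2\,\diam(J_l)\ge \al N-\frac{\al N}{2}=\frac{\al N}{2}.
\]
Hence $\{h_i(a):i\in S\}$ is a set of $m$ reals in $[-N,N]$ that are pairwise $(\al N/2)$-separated, forcing $|S|\le 2N/(\al N/2)+1=4/\al+1<m$, a contradiction. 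Therefore $k\le k(\al)$.

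The only potentially delicate point is matching the partition mesh to the Lipschitz slack so that $\al N$-separation somewhere in $J_l$ survives as positive separation at the fixed point $a$; the condition $\al N\ge 2$ is exactly what makes the integer rounding harmless, and the rest is a direct application of the multicolor Ramsey theorem together with a $1$-dimensional packing bound.
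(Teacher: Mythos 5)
Your argument is correct, and it takes a genuinely different route from the paper's proof. You encode the hypotheses as $1$-Lipschitz prefix-sum functions $h_i$ on $\{0,\dots,N\}$ (Lipschitzness is exactly \eqref{zdiff}), color each \emph{pair} $\{i,j\}$ by which of the $\lceil 8/\al\rceil$ consecutive blocks of $\{0,\dots,N\}$ contains a witness index for \eqref{lfarz}, apply the ordinary multicolor Ramsey theorem for graphs to force all witnesses within a large clique into a single short block, and then finish with a one-dimensional packing bound for the values $h_i(a)$ at one fixed coordinate $a$, using $|h_i(a)|\le N$. The paper instead takes the \emph{first} witness indices $r(i,j)$ of \eqref{rij}, proves the triple-separation Lemma~\ref{L:Lemma6} (in any triple, $\max$ and $\min$ of $r(i,j),r(i,l),r(j,l)$ differ by at least $(\al N-1)/2$), colors \emph{triples} with three colors according to which pair attains the maximum, invokes the Ramsey theorem for $3$-uniform hypergraphs, and then argues by induction that along a monochromatic set the indices $r(\cdot,\cdot)$ must grow (linearly in the red/blue cases, logarithmically in the green case), contradicting $r(\cdot,\cdot)\le N$. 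Your version is more elementary (graph Ramsey rather than hypergraph Ramsey, no growth induction) and yields a much smaller admissible $k(\al)$: a $\lceil 8/\al\rceil$-color graph Ramsey number with target clique size $\lfloor 4/\al\rfloor+2$, versus the paper's $R_3\left(2^{\lceil 4/\al\rceil},3\right)$. Two minor observations: in your argument condition \eqref{alphaN} is hardly used (you only need $N\ge 1$ to build blocks of diameter at most $\al N/4$; in the paper it absorbs the $+1$ rounding coming from the minimality in \eqref{rij}), and since $h_i-h_j$ is itself $1$-Lipschitz by \eqref{zdiff}, you could replace $2\,\diam(J_l)$ by $\diam(J_l)$ and relax the block size accordingly.
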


Recall that the diamond $D_{1,k}$ consists of $(k+2)$ vertices. In
this section we shall use for them notation which is different
from the one used before: the bottom vertex will be denoted by
$v_{-1}$, the top vertex will be denoted by $v_{0}$, and the $k$
vertices, which are midpoints between $v_{-1}$ and $v_0$ will be
denoted by $\{v_i\}_{i=1}^k$.  For all $1\le i,j\le k$, $i\ne j$
the distances between the vertices in $D_{1,k}$ are
\begin{align}\lb{dist1}
1=d_{D_{1,k}}(v_{-1},v_{0})=d_{D_{1,k}}(v_i,v_j)=2d_{D_{1,k}}(v_0,v_i)=2d_{D_{1,k}}(v_i,v_{-1}).
\end{align}

\begin{proof}[Proof of Lemma~\ref{stretch}]
Let $\eta\in(0,1)$ be given, and let $\de\in(0,\eta/5)$.  We denote by $\{v_i^n\}_{i=-1}^k$ the vertices of
$D_{n,k}$ which correspond to vertices $\{v_i\}_{i=-1}^k$ in
$D_{1,k}$. For each $n\in\bbN$, we define $t(n)$ to be the
supremum of $ \|f_n(v_{0}^n)-f_n(v_{-1}^n)\|_1$ over all
bilipschitz embeddings $f_n:D_{n,k}\to \ell_1$ satisfying
\eqref{E:LemStr}. The supremum is finite because
$d_{D_{n,k}}(v_{0}^n,v_{-1}^n)=1$. Note that for every $n\in
\bbN$, the diamond $D_{n+1,k}$ contains an isometric copy of
$D_{n,k}$ with the same top and bottom vertex. Thus, for every
$m\in\bbN$, $t(n+1)\le t(n).$ Since, by \eqref{E:LemStr} and
because $\|\cdot \|_s\le \|\cdot \|_1$, the sequence
$(t(n))_{n\in\bbN}$ is bounded below by $1/C$, it is convergent.
We define
$$t=\lim_{n\to\infty} t(n).$$

Let $n\in\bbN$ be such that
\begin{equation*}
t\le t(n)\le t(n-1)\le (1+\de)t.
\end{equation*}

Then there exists a bilipschitz embedding $f_{n}:D_{n,k}\to
\ell_1$, satisfying \eqref{E:LemStr},   such that
$f_n(v_{-1}^n)=0$, and
\begin{equation}\label{tm}
(1-\de)t\le (1-\de)t(n)\le\|f_n(v_{0}^n)-f_n(v_{-1}^n)\|_1\le
t(n)\le (1+\de)t.
\end{equation}
We put $x_j=f_n(v_{j}^n)$ for $j\in\{-1,0,1,\dots,k\}$.
 Note that, for every $i=1,\dots, k$, the diamond $D_{n,k}$ contains two $1/2$-scaled copies of the diamond $D_{n-1,k}$,  with
top-bottom pairs $(v_{-1}^n,v_{i}^n)$ and $(v_{i}^n,v_{0}^n)$, respectively. Since
$f_n$ restricted to either of these subdiamonds satisfies
\eqref{E:LemStr}, we obtain that
\begin{align}\label{midpointm}
\|x_i\|_1 &\le\frac12 t(n-1)\le\frac12(1+\de)t.\\
\|x_0-x_i\|_1 &\le\frac12
t(n-1)\le\frac12(1+\de)t.\label{midpointm2}
\end{align}
Since $\|x_0\|_1\le \|x_i\|_1+\|x_0-x_i\|_1$, by
\eqref{tm}--\eqref{midpointm2}, we obtain
\begin{align*}
\frac{1}{2}\frac{(1-3\de)}{(1-\de)}\|x_0\|_1 &\le\|x_i\|_1\le
\frac{1}{2}\frac{(1+\de)}{(1-\de)}\|x_0\|_1,\\
\frac{1}{2}\frac{(1-3\de)}{(1-\de)}\|x_0\|_1 &\le\|x_0-x_i\|_1\le
\frac{1}{2}\frac{(1+\de)}{(1-\de)}\|x_0\|_1,
\end{align*}
 and, since
$\de<\frac{\eta}{5}$, we conclude that \eqref{midpoint} and
\eqref{midpoint2} are satisfied.

Further,   by \eqref{dist1} and \eqref{E:LemStr} we get that   $\|x_0\|_1\le 1$, and
therefore
\begin{equation*}
\|x_i-x_j\|_1\ge||x_i-x_j||_s\stackrel{\eqref{E:LemStr}}{>}
\frac{1}{C}\ge\frac{1}{C} \|x_0\|_1.
\end{equation*}
Using \eqref{E:LemStr} again we get \eqref{far}.
\end{proof}

\begin{proof}[Proof of Lemma~\ref{reduction}] Let $\eta=\frac{1}{2C^2}>0$. By
Lemma~\ref{stretch}, there exist
nonzero elements
 $\{x_i\}_{i=0}^k$ in $\ell_1$ satisfying \eqref{midpoint}--\eqref{far}.
\Buoo, we may assume that the vector $x_0\in \ell_1$ has finite
support and rational coefficients.  Thus, after rescaling (which
is applied to all vectors
 $\{x_i\}_{i=0}^k$), we may
assume that all coefficients of $x_0$ are  integers and
\begin{equation}\label{scalex0}
\|x_0\|_1\ge  4C^2.
\end{equation}

Let $p\in\bbN$ and $\{a_m\}_{m=1}^p\subset
 \bbZ$, be such that
$x_0=\sum_{m=1}^p a_me_m$. We define $b_0\DEF0$,  and $b_m\DEF\max\{b_{m-1}+|a_m|, b_{m-1}+1\}$,  for each $m\in\{1,\dots,p\}$. Next we define
an operator $T$ on $c_{00}$ by putting for every
$y=\sum_{m=1}^t y_{m}e_m\in c_{00}$, where $t\in\bbN$,
\begin{equation*}
T\left(\sum_{m=1}^t y_{m}e_m\right)=
\begin{cases}{\disp
\sum_{m=1}^t \left(\sum_{\nu=b_{m-1}+1}^{b_m}\frac{y_{m}}{b_m-b_{m-1}} \ e_\nu\right),} \ \ \ & {\text{\rm \ if\ }} t\le p,\\
{\disp \sum_{m=1}^p
\left(\sum_{\nu=b_{m-1}+1}^{b_m}\frac{y_m}{b_m-b_{m-1}} \
e_\nu\right)+\sum_{m=p+1}^{t}y_{m}e_{b_p+m-p}, }\ \ \ & {\text{\rm
\ if\ }} t> p.
\end{cases}
\end{equation*}

 Notice that both the $\ell_1$-norm and the summing norm are equal-signs-additive (ESA)  on the unit vector basis $\{e_m\}_{m=1}^\infty$ of
 $c_{00}$ (see Definition~\ref{D:ESAetc}). Therefore  the operator $T$ is an isometry on $c_{00}$ in both of these norms. Thus the elements
 $\{T(x_i)\}_{i=0}^k$ in $\ell_1$  also satisfy \eqref{midpoint}--\eqref{far}.

 Note that by the definition of the numbers $\{b_m\}_{m=1}^p$, we have
$$T(x_0)=T\left(\sum_{m=1}^p a_me_m\right)=\sum_{m=1}^p \left( \sum_{\nu=b_{m-1}+1}^{b_m}\e_me_\nu\right),$$
where $\e_m=\sign(a_m)$,  for each $m\in\{1,\dots,p\}$ (we use the
convention that $\sign(0)=0$).Thus all nonzero coordinates of
$T(x_0)$ are equal to 1 or $-1$.

 Thus, after applying all the above operations  if necessary,  we may assume \buoo, that there exist
nonzero elements
 $\{x_i\}_{i=0}^k$ in $\ell_1$ that satisfy \eqref{midpoint}--\eqref{far}, and so that $x_0=\sum_{m=1}^\infty x_{0m}e_m\in c_{00}$ and all nonzero coefficients of  $x_0$ satisfy $|x_{0m}|=1$. Let $N\in\bbN$ be such  that
 $$\|x_0\|_1=\sum_{m\in\supp(x_0)} |x_{0m}|=N.$$

For each $1\le i\le k$, we write
$$x_i=\dot{x_i}+\ddot{x_i},$$
where $\supp(\dot{x_i})\subseteq\supp(x_0)$, and
$\supp(\ddot{x_i})\cap\supp(x_0)=\emptyset$.  Then
\begin{align*}
\|x_i\|_1&=\|\dot{x_i}\|_1+\|\ddot{x_i}\|_1,\\
\|x_0-x_i\|_1&=\|x_0-\dot{x_i}\|_1+\|\ddot{x_i}\|_1,
\end{align*}
and thus, by summing \eqref{midpoint} and \eqref{midpoint2}, we obtain
\begin{equation*}
\begin{split}
(1+\eta)\|x_0\|_1&\ge
\|x_i\|_1+\|x_0-x_i\|_1\\&=\|\dot{x_i}\|_1 + \|\ddot{x_i}\|_1+
\|x_0-\dot{x_i}\|_1+\|\ddot{x_i}\|_1\\&\ge
\|x_0\|_1+2\|\ddot{x_i}\|_1
\end{split}
\end{equation*}

Thus
\begin{equation}\label{hat}
 \|\ddot{x_i}\|_1\le \frac12\eta\|x_0\|_1,
\end{equation}
and
\begin{equation}\label{supportx0}
\begin{split}
 \|\dot{x_i}\|_1 + \|x_0-\dot{x_i}\|_1
&= \sum_{m\in\supp(x_0)} \Big(|x_{im}|+|x_{0m}-x_{im}|\Big)\\
&\le (1+\eta)\|x_0\|_1.
\end{split}
\end{equation}
For each $1\le i\le k$, we define the following sets
\begin{align*}
A_i&=\{m\in \supp(x_0)\ : \ (|x_{im}|\le 1) \wedge (\sign(x_{im})=\sign(x_{0m}))\},\\
B_i&=\{m\in \supp(x_0)\ : \ (|x_{im}|> 1) \wedge (\sign(x_{im})=\sign(x_{0m}))\},\\
C_i&=\{m\in \supp(x_0)\ : \ \sign(x_{im})\ne\sign(x_{0m})\},\\
D_i&= \supp(x_i)\setminus\supp(x_0).
\end{align*}
Since we use the convention that $\sign(0)=0$, the sets $A_i, B_i,
C_i$ are mutually disjoint, and $A_i\cup B_i\cup C_i= \supp(x_0)$.

Note that for every $m\in \supp(x_0)$ and every
$i\in\{1,\dots,k\}$ we have
\begin{equation*}\label{sumk}
|x_{im}|+|x_{0m}-x_{im}|=
\begin{cases}
1 \ \ \ & {\text{\rm \ if\ }} m\in A_i,\\
1+2(|x_{im}|-1) \ \ \ & {\text{\rm \ if\ }} m\in B_i,\\
1+2|x_{im}| \ \ \ & {\text{\rm \ if\ }} m\in C_i.
\end{cases}
\end{equation*}

Thus, by \eqref{supportx0}, we obtain
\begin{equation}\label{supportx01}
\begin{split}
(1+\eta)\|x_0\|_1&\ge
\sum_{m\in\supp(x_0)}
\Big(|x_{im}|+|x_{0m}-x_{im}|\Big)\\
 &= \sum_{m\in\supp(x_0)} 1+ \sum_{m\in B_i} 2(|x_{im}|-1) + \sum_{m\in C_i} 2|x_{im}|  \\
 &= \|x_0\|_1+ 2\left[\sum_{m\in B_i} (|x_{im}|-1) + \sum_{m\in C_i} |x_{im}|\right]
\end{split}
\end{equation}

Now, for $1\le i\le k$, we define elements $z_i=\sum_{m=1}^\infty
z_{im}e_m\in c_{00}$, by setting
\begin{equation*}
z_{im}=
\begin{cases}
x_{im} \ \ \ & {\text{\rm \ if\ }} m\in A_i,\\
x_{0m} \ \ \ & {\text{\rm \ if\ }} m\in B_i,\\
0 \ \ \ & {\text{\rm \ if\ }} m\notin A_i\cup B_i.\\
\end{cases}
\end{equation*}

Thus, for each $1\le i\le k$, $\supp(z_i)\subseteq \supp(x_0)$.
Moreover for each $m\in \supp(x_0)$, and each $1\le i,j \le k$, we
have
\begin{equation*}\label{zijk}
|z_{im}|\le 1, \ \ \ \  {\text{\rm \ and \ \ \ \ }}
|z_{im}-z_{jm}|\le 1.
\end{equation*}

Further, by \eqref{hat} and \eqref{supportx01},  we obtain
\begin{equation}\label{approxzx}
\begin{split}
\|x_i-z_i\|_1&=\sum_{m\in B_i} (|x_{im}|-1) + \sum_{m\in C_i} |x_{im}| + \sum_{m\in D_i} |x_{im}| \\
&\le \eta\|x_0\|_1.
\end{split}
\end{equation}
Thus, using \eqref{approxzx}, \eqref{midpoint}, \eqref{midpoint2},
and \eqref{far}, we obtain for all $1\le i \le k$,
\begin{align*}
\frac{(1-3\eta)}{2}\|x_0\|_1 &\le\|z_i\|_1\le \frac{(1+3\eta)}{2}\|x_0\|_1,\\
\frac{(1-3\eta)}{2}\|x_0\|_1 &\le\|x_0-z_i\|_1\le
\frac{(1+3\eta)}{2}\|x_0\|_1,
\end{align*}
and,  for all $1\le i, j\le k$, $i\ne j$,
\begin{equation*}\label{farz}
\|z_i-z_j\|_s\ge\left(\frac{1}{C^2}-2\eta\right)\|x_0\|_1.
\end{equation*}

Therefore, since  $\eta=\frac1{4C^2}$ and $\al=\frac{1}{2C^2}$,
\eqref{lfarz} holds. Since,
  by \eqref{scalex0}, $N=\|x_0\|_1\ge 4C^2$,  we
have $\al N\ge 2$, that is, \eqref{alphaN} holds.

Finally, since for each $1\le i\le k$, $\supp(z_i)\subseteq \supp(x_0)$, and since
both the $\ell_1$-norm and the summing norm are ESA, we can ``remove all the common gaps'' in the supports of $x_0$ and $\{z_i\}_{i=1}^k$ by applying appropriate shift operators (by ESA, all such shifts are isometries in both the $\ell_1$-norm and the summing norm), that is, we can assume without loss of generality that $\supp(x_0)=\{1,\dots,N\}$.
 Thus all conditions
\eqref{suppz}--\eqref{alphaN} are satisfied, which ends the proof
of Lemma~\ref{reduction}.
\end{proof}

\begin{proof}[Proof of Lemma~\ref{bound-on-branches}]
Let $\al\in(0,1)$. We   will say   that a natural number $k$
satisfies property $P(\al)$ (or $k\in P(\al)$), if there exist
$N\in\bbN$,  and elements $z_i=\sum_{m=1}^\infty z_{im}e_m\in
\ell_1^N$, for all $i\in\{1,\dots,k\}$, that satisfy conditions
\eqref{suppz}--\eqref{alphaN}.

We fix  $k\in \bbN$ so that $k$ satisfies property $P(\al)$.

Let $N=N(k)\in\bbN$ be a corresponding natural number, and let
$z_i=\sum_{m=1}^\infty z_{im}e_m\in \ell_1^N$ for
$i\in\{1,\dots,k\}$ be the
 corresponding elements that satisfy
\eqref{suppz}--\eqref{alphaN} (note that these elements  may
depend on the values of both $k$ and $N$ but since both $k$ and
$N$ are now fixed, and  to avoid excessive subscripts, we do not
reflect this fact in our  notation).

For every $i, j\in\{1,\dots,k\}$ with $i\ne j$, we will denote by
$r(i,j)$ the smallest integer in $\{1,\dots,N\}$ such that
\begin{equation}\label{rij}
\al N\le\Big|\sum_{m=1}^{r(i,j)} (z_{im}-z_{jm})\Big|<\al N +1,
\end{equation}
that is $r(i,j)$ is the smallest index that witnesses the fact that the summing norm distance between $z_i$ and $z_j$ is at least  $\al N$. By our assumptions
 \eqref{zdiff} and \eqref{lfarz},  for every
 $i\ne
j$, the number $r(i,j)$ exists.

Our proof of Lemma~\ref{bound-on-branches} consists of two
essential steps. First, we prove that for every three pairwise
distinct numbers $i,j,l\in\{1,\dots,k\}$ the values of indices
$r(i,j),r(i,l),r(j,l)$ cannot ``stay together'', and at least two
of them are separated by a positive distance independent of
$i,j,l$,  see Lemma~\ref{L:Lemma6}.

In the second step we prove that if, for example, for all triples
$1\le i<j<l\le k$, the maximum of $r(i,j),r(i,l),r(j,l)$ is always
attained at $r(j,l)$, then, for all $i,j$, the values of indices
$r(i,j)$ have to grow by a fixed amount with every increase of $i$
and $j$. As a consequence, we obtain that $r(k-1,k)$ would have to
be much larger than $r(1,2)$, see \eqref{final}. Since all
elements $z_i$ are in $\ell_1^N$, we know that $r(k-1,k)\le N$,
and thus there would be a bound on the size of $k$. We would
obtain similar bounds if the maximum of $r(i,j),r(i,l),r(j,l)$  is
always  equal to $r(i,j)$, or  always equal to  $r(i,l)$. This
leads us to a 3-coloring of triples from $\{1,\dots,k\}$, and
using the Ramsey theorem we conclude that for large $k$ there
exist large subsets of $\{1,\dots,k\}$ with monochromatic triples,
which leads us to an upper bound for $k$.

\begin{lemma}\label{L:Lemma6}
For every pairwise distinct triple of numbers $i,j,l\in\{1,\dots,k\}$ we have:
\begin{equation}\label{Lemma6}
\max\{r(i,j),r(i,l),r(j,l)\}- \min\{r(i,j),r(i,l),r(j,l)\}\ge
\frac{\al N-1}{2}>0.
\end{equation}
\end{lemma}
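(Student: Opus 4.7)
The plan is to reformulate everything in terms of partial-sum walks
$S_{ab}(t)\DEF\sum_{m=1}^t(z_{am}-z_{bm})$ for each pair
$(a,b)\in\{(i,j),(j,l),(i,l)\}$. Two structural properties drive the argument.
First, by \eqref{zdiff} each $S_{ab}$ is 1-Lipschitz in $t$:
$|S_{ab}(t)-S_{ab}(t-1)|\le 1$. Second, the linear identity
$S_{il}(t)=S_{ij}(t)+S_{jl}(t)$ holds at every $t$, coming from
$z_i-z_l=(z_i-z_j)+(z_j-z_l)$. The definition \eqref{rij} recasts cleanly as
``$r(a,b)$ is the first hitting time of $\{|S_{ab}|\ge\al N\}$'': that is,
$|S_{ab}(t)|<\al N$ for all $t<r(a,b)$ and $|S_{ab}(r(a,b))|\in[\al N,\al N+1)$.
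I would then split on how $r_{\min}$ is attained among the three values $r(i,j),r(j,l),r(i,l)$.

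First I would dispose of the degenerate coincidences. If all three $r$'s coincide at some time $t$,
then all three walks $|S_{ab}|$ lie in $[\al N,\al N+1)$ at $t$; the identity then forces either
equal signs of $S_{ij}$ and $S_{jl}$ (whence $|S_{il}|\ge 2\al N$, impossible since $\al N\ge 2$ by
\eqref{alphaN}) or opposite signs (whence $|S_{il}|<1$, contradicting $|S_{il}|\ge\al N$).
If exactly two of the $r$'s equal $r_{\min}$, a short sign analysis forces the third
(still sub-threshold) walk to satisfy $|S(r_{\min})|<1$; the 1-Lipschitz property then gives
$r_{\max}-r_{\min}\ge\al N-1\ge(\al N-1)/2$.

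The main case, and the crux, is when $r_{\min}$ is uniquely attained at some time $t$ by a single
pair. Setting $\sigma\in[\al N,\al N+1)$ to be the absolute value of that walk at $t$, and
$u,v$ to be the values of the other two walks at $t$ (so that $|u|,|v|<\al N$), a short case
analysis based on which pair achieves $r_{\min}$, combined with the identity and $\sigma\ge\al N$,
pins down the signs of $u,v$ and yields in every sub-case the budget equality $|u|+|v|=\sigma$.
For instance, if $r_{\min}=r(i,l)$ with $S_{il}(t)=\sigma>0$, then $S_{ij}(t),S_{jl}(t)\ge 0$ and
both sub-threshold, so $|u|+|v|=u+v=\sigma$; if instead $r_{\min}=r(i,j)$ with $S_{ij}(t)=\sigma>0$,
then $S_{il}(t)\ge 0$ and $S_{jl}(t)\le 0$ with $|S_{il}(t)|+|S_{jl}(t)|=\sigma$. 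By 1-Lipschitzness,
the two remaining walks then need at least $\al N-|u|$ and $\al N-|v|$ more steps, respectively,
to first hit the threshold $\al N$. Summing these two waiting times gives
$2\al N-\sigma>\al N-1$, so the larger of them is at least $(\al N-1)/2$, and hence
$r_{\max}-r_{\min}\ge(\al N-1)/2$. Positivity follows from \eqref{alphaN}.

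The main obstacle is this last case: the budget equality $|u|+|v|=\sigma$ is tight, and the
positive lower bound on $r_{\max}-r_{\min}$ crucially exploits both the upper endpoint
$\sigma<\al N+1$ of the narrow first-hitting window and the 1-Lipschitz property of the walks.
This interplay between the linear identity, the step-size bound, and the interval $[\al N,\al N+1)$
is precisely what produces the constant $(\al N-1)/2$ in the conclusion.
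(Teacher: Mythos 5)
Your proof is correct, and it reaches the constant $\frac{\al N-1}{2}$ by a slightly different organization than the paper's. Both arguments run on the same two ingredients: the step bound from \eqref{zdiff} (the difference walks $t\mapsto\sum_{m=1}^t(z_{am}-z_{bm})$ change by at most $1$ per step) and the width-one window $[\al N,\al N+1)$ in \eqref{rij}, together with \eqref{alphaN} for positivity. The paper, however, anchors all three partial sums at the fixed time $r(i,j)$ (its $\tau(i),\tau(j),\tau(l)$) and runs a dichotomy on where $\tau(l)$ sits: if $|\tau(l)-\tau(i)|\le\frac{\al N+1}{2}$ it bounds $|r(i,j)-r(i,l)|$ by an ``ascent'' estimate from the defining inequality at $r(i,l)$, and otherwise it has $|\tau(l)-\tau(j)|>\frac{3\al N+1}{2}$ and bounds $|r(i,j)-r(j,l)|$ by a ``descent'' estimate using the upper end of the window at $r(j,l)$; with the signed-summation convention this needs no information about the walks before their hitting indices. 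You instead anchor at $r_{\min}$, which requires the extra (correctly Lipschitz-justified) observation that $r(a,b)$ is a genuine first hitting time, so the other two walks are strictly below $\al N$ at $r_{\min}$; the identity $S_{il}=S_{ij}+S_{jl}$ then pins the signs, gives the exact budget $|u|+|v|=\sigma<\al N+1$, and an averaging of the two waiting times finishes. This eliminates the paper's second (descent) case and treats the three pairs symmetrically, at the price of the first-hitting-time remark and the case split on how the minimum is attained (including ruling out ties, which you do correctly using $\al N\ge 2$). Both routes give \eqref{Lemma6} with the same constant, and your waiting-time inequality $r_{\max}-r_{\min}\ge\al N-|u|$ is exactly the mechanism behind the paper's first case, so the difference is one of decomposition rather than of substance.
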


\begin{proof}
Let $\tau(i)$, $\tau(j)$ and $\tau(l)$ be the sums of the respective
sequences up to the term number $r(i,j)$. That is, for example,
$\tau(l)=\sum_{m=1}^{r(i,j)} z_{lm}$. Then, by the definition of
$r(i,j),$ we have
\begin{equation*}
 \alpha N\le
|\tau(i)-\tau(j)|<\alpha N+1.
\end{equation*}

Suppose, \buoo, that
$$|\tau(l)-\tau(i)|\le |\tau(l)-\tau(j)|.$$

Then we have one the following  two possibilities, cf.
Figure~\ref{F:Lemma6},
\begin{equation}\label{E:4-4-case1}
|\tau(l)-\tau(i)|\le\frac{\alpha N+1}2,
\end{equation}
or
\begin{equation}\label{E:4-4-case2}
|\tau(l)-\tau(i)|>\frac{\alpha N+1}2, \ \ \text{\rm and}\ \ |\tau(l)-\tau(j)|>\frac{3\alpha N+1}2.
\end{equation}

\begin{figure}[h]
\centering
\includegraphics[scale=1]{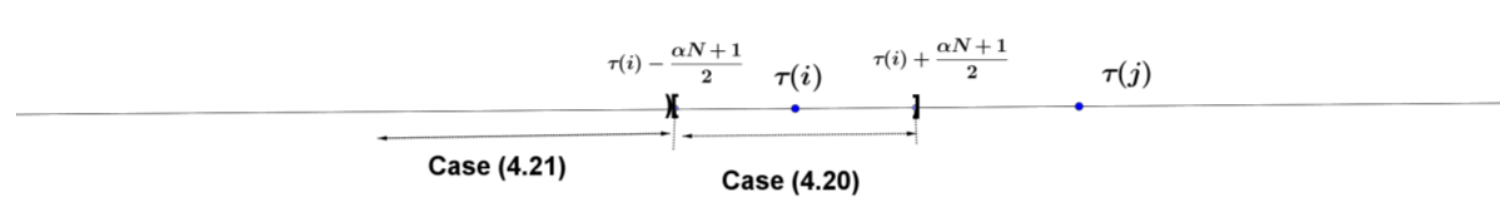}
\caption{Intervals containing $\tau(l)$.}\label{F:Lemma6}
\end{figure}

In the following computation we will use the convention that if $a,b\in\bbN$,  $a<b$, and $\{c_i\}_{i=a}^b$ are real numbers, then
$$\sum_{m=b}^a c_i\DEF -\sum_{m=a}^b c_i.$$

Suppose first that \eqref{E:4-4-case1} holds. Then, by the
definition of $r(i,l)$, and the above convention, we have
\begin{equation*}
\begin{split}
\al N&\le\Big|\sum_{m=1}^{r(i,l)} (z_{im}-z_{lm})\Big|\\
&\le \Big|\sum_{m=1}^{r(i,j)} (z_{im}-z_{lm})\Big| + \Big|\sum_{m=r(i,j)+1}^{r(i,l)} (z_{im}-z_{lm})\Big|\\
&\le |\tau(i)-\tau(l)| +
\Big|\sum_{m=r(i,j)+1}^{r(i,l)} |z_{im}-z_{lm}|\ \Big|\\
&\stackrel{{\rm by \eqref{E:4-4-case1}\, and\, \eqref{zdiff}}}{\le} \frac{\alpha N+1}2 +
|{r(i,j)}-{r(i,l)}|,
\end{split}
\end{equation*}
and therefore $|{r(i,j)}-{r(i,l)}|> \frac{\alpha N-1}2,$ so \eqref{Lemma6} holds in this case.

Next we suppose  that \eqref{E:4-4-case2} holds. Then, by the definition of $r(j,l)$, we have
\begin{equation*}
\begin{split}
\al N+1&>\Big|\sum_{m=1}^{r(j,l)} (z_{jm}-z_{lm})\Big|\\
&\ge \Big|\sum_{m=1}^{r(i,j)} (z_{jm}-z_{lm})\Big| - \Big|\sum_{m=r(i,j)+1}^{r(j,l)} (z_{jm}-z_{lm})\Big|\\
&\stackrel{{\rm by \eqref{E:4-4-case2}\, and\, \eqref{zdiff}}}{\ge} \frac{3\alpha N+1}2 -
|{r(i,j)}-{r(j,l)}|,
\end{split}
\end{equation*}
and therefore $|{r(i,j)}-{r(j,l)}|> \frac{\alpha N-1}2,$ so \eqref{Lemma6} holds also in this case, which ends the proof of Lemma~\ref{L:Lemma6}.
\end{proof}

We are now ready for the final step of    the proof of Lemma~\ref{bound-on-branches}   and
Theorem~\ref{no-diamonds-in-X-Delta}.

For every $1\le i<j<l \le k$ we define
\begin{equation*}\label{E:Mijl}
M_{ijl}=\max\{r(i,j),r(i,l),r(j,l)\}.
\end{equation*}

We will color triples $(i,j,l)\in\{1,\dots,k\}^3$ with   $1\le i<j<l
\le k$ as
\begin{itemize}
\item[--] {\it red} - if $M_{ijl}=r(j,l)$,
\item[--] {\it blue} - if $M_{ijl}=r(i,j)$, and $r(i,j)>r(j,l)$,
\item[--] {\it green} - if $M_{ijl}=r(i,l)$, and $r(i,l)>\max\{r(i,j),r(j,l)\}$.
\end{itemize}

We refer to \cite[Section 1.2]{GRS90} for basic facts of Ramsey
theory. By the Ramsey Theorem, for every $s\in\bbN$, there exists
a natural number denoted $R_3(s,3)\in\bbN$,   so that   for all $k\ge
R_3(s,3)$ the set $\{1,\dots,k\}$ contains a subset $B$ with
$\card(B)\ge s$  such that every triple $(i,j,l)\in B^3$ is of
the same color.

Let $s\in\bbN$, $s\ge 3$,  be such that there exists a  subset
$B=\{b_1,\dots,b_s\}$, listed in the increasing order, of
$\{1,\dots,k\}$ so that  every triple in $B^3$ is of the same
color. We will consider the three possible colors separately.

First we assume that the color of any triple in
$B^3$ is red. We show that in this case for every
$q\in\{1,\dots,s-1\}$ we have
\begin{equation}\label{final}
\forall t>q\ \ \ \ \ r(b_q,b_t)\ge \frac{(q+1)(\al N-1)}{2}.
\end{equation}

We prove \eqref{final} by induction on $q$.

When $q=1$, by \eqref{rij} and \eqref{zdiff}, for all $t>1$, we have
\begin{equation*}
\al N\le \sum_{m=1}^{r(b_1,b_t)} |z_{b_1m}- z_{b_tm}|\le
r(b_1,b_t),
\end{equation*}
so \eqref{final} is satisfied for $q=1$.

As the Inductive Hypothesis, we assume that \eqref{final} holds
for some $q<s-1$.

By \eqref{Lemma6}, the assumption that all triples in $B$ are
red, and the Inductive Hypothesis, for all $t>q+1$ we have
\begin{equation*}
\begin{split}
r(b_{q+1},b_t)&= M_{b_q,b_{q+1},b_t}\\
&\stackrel{\eqref{Lemma6}}{\ge} \min\{r(b_q,b_{q+1}),r(b_q,b_t)\} +\frac{\al N-1}{2}\\
&\ge \frac{(q+1)(\al N-1)}{2} + \frac{\al N-1}{2}\\
& = \frac{(q+2)(\al N-1)}{2}.
\end{split}
\end{equation*}
By induction, this ends the proof that  \eqref{final} holds for
every $q\in\{1,\dots,s-1\}$.

Since $z_{b_{s-1}}, z_{b_s}\in\ell_1^N$, as a consequence of
\eqref{final} we get
$$N\ge r(b_{s-1},b_s)\ge \frac{s(\al N-1)}{2}\stackrel{\eqref{alphaN}}{\ge} \frac{s\al N}{4} .$$

Thus $s\le \left\lfloor\frac{4}{\al}\right\rfloor,$ and, if all
triples in $B^3$ were red, we obtain that
\begin{equation}\lb{redest}
\card(B)\le \left\lfloor\frac{4}{\al}\right\rfloor.
\end{equation}

The case when all triples in $B^3$ are blue can be considered in
the same way, we  just list the elements in $B$ in the decreasing order. Thus  \eqref{redest} is also valid in this case.

It remains to consider the case when all triples are green. In
this case we prove by induction on $q\in\{0,\dots,\lfloor\log_2s\rfloor-1\}$
that for all $t,u \in \{1,\dots,s\}$ with $t<u$ and
$\log_2|u-t|\ge q$ we have
\begin{equation}\label{E:FinGreen}
r(b_t,b_u)\ge (q+2)\left(\frac{\al N-1}{2}\right).
\end{equation}

 By \eqref{rij} and\eqref{zdiff}, for any $t<u$ we
have
\begin{equation*}
\al N\le \sum_{m=1}^{r(b_u,b_t)} |z_{b_um}- z_{b_tm}|\le
r(b_u,b_t),
\end{equation*}
so \eqref{E:FinGreen} is satisfied for $q=0$.

As the Inductive Hypothesis, we assume that \eqref{E:FinGreen} holds
for some $q<\lfloor\log_2s\rfloor-1$.

 Assume that
$t<u$ and $\log_2|t-u|\ge q+1$. Let $w$ be such that $t<w<u$,
$\log_2|t-w|\ge q$,  and $\log_2|u-w|\ge q$. Then the assumption that the triple $(b_t,
b_w, b_u)$ is green,     the Inductive Hypothesis, and \eqref{Lemma6} imply that
\[\begin{split}r(b_t,b_u)&\ge \min\{r(b_t,b_w), r(b_w,b_u)\}+\frac{\al
N-1}{2}\\&\ge (q+3)\left(\frac{\al N-1}{2}\right),
\end{split}\]
which, by induction, proves \eqref{E:FinGreen}.

Therefore, since $z_{b_{1}}, z_{b_s}\in\ell_1^N$, and since by \eqref{alphaN}, $\al N\ge 2$,  we get
$$N\ge r(b_1,b_s)\ge  \left(\Big\lfloor\log_2|s-1|\Big\rfloor+2\right)\left(\frac{\al N-1}{2}\right)\stackrel{\eqref{alphaN}}{\ge} \frac{(\log_2s)\al N}{4} .$$

Thus $\log_2s\le \left\lceil\frac{4}{\al}\right\rceil,$ and in the
case when all triples in $B^3$ are green we obtain that
\begin{equation*}\lb{greenest}
\card(B) \le 2^{\left\lceil\frac{4}{\al}\right\rceil}.
\end{equation*}
Together with \eqref{redest}, by the Ramsey theorem, this implies
that
$$k\le k(\alpha)\DEF
R_3\left(2^{\left\lceil\frac{4}{\al}\right\rceil},3\right),$$
which  ends the proof of Lemma~\ref{bound-on-branches} and
Theorem~\ref{no-diamonds-in-X-Delta} (with $k(C)\DEF
R_3\big(2^{\lceil8C^2\rceil},3\big)$).
\end{proof}

\section{Laakso graphs}\lb{S:Laakso}

In this    section we outline  a proof of an analog of
Theorem~\ref{T:Main} for multi-branching Laakso graphs. Recall
that Johnson and Schechtman \cite{JS09} proved that the set of all
(binary) Laakso graphs, is a set of test spaces for
super-reflexivity. These graphs were introduced by Lang and Plaut
\cite{LP01} whose construction was based on some ideas of Laakso
\cite{La00}. Laakso graphs have many similar properties with
diamond graphs, but, in addition,   are doubling, that is, every
ball in any of these graphs can be covered by a finite number of
balls of half the radius, and that finite number does not depend
on either the graph or the radius of the ball, see
\cite{La00,LP01}. Here we will consider a natural generalization
of Laakso graphs to graphs with an arbitrary finite number of
branches.

\begin{definition}\label{D:multiLaakso}(cf. \cite{LR10})
For  any integer  $k\ge 2$, we define   $L_{1,k}$ to be a graph consisting of $k+4$ vertices $\{s,t, s_1,t_1\}\cup\{v_i\}_{i=1}^k$ joined by the following $(2k+2)$ edges: $(s, s_1)$, $(t_1,t)$, and, for all $i\in\{1,\dots,k\}$, $(s_1,v_i)$, $(v_i,t_1)$.
  see Figure~\ref{F:L1}.
For any integer $n\ge 2$, if the graph $L_{n-1,k}$  is defined, we
define the graph $L_{n,k}$ as the graph obtained from $L_{n-1,k}$
by replacing each edge $uv$ in $L_{n-1,k}$ by a copy of the graph
$L_{1,k}$. We put uniform weights on all edges of $L_{n,k}$, and
we endow $L_{n,k}$ with the shortest path distance. We call
$\{L_{n,k}\}_{n=0}^\infty$ the {\it Laakso graphs of branching
$k$}.
\end{definition}
\begin{figure}[h]
\centering
\includegraphics[scale=0.6]{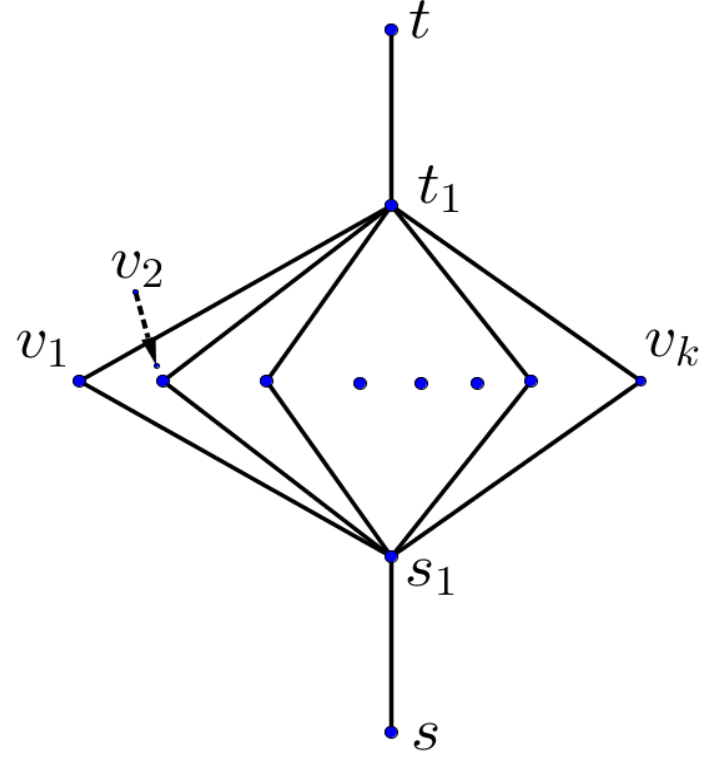}
\caption{The Laakso graph $L_{1,k}$}\label{F:L1}
\end{figure}

We refer to vertex $s$, as the {\it bottom}, and to the vertex $t$, as the {\it top} of the graph  $L_{n,k}$.
Similarly as in the case of the diamonds, we use the normalization of Laakso graphs so that   the distance from the top to the bottom vertex of   $L_{n,k}$ is equal to 1. In this normalization  $L_{n-1,k}$ is embedded isometrically in $L_{n,k}$.

We have the following   result, whose proof is very similar to the
proof of Theorem~\ref{T:Main}. We outline its proof below.

\begin{theorem}\label{T:Laakso} For every $\ep>0$, any non-superreflexive Banach
space $X$, and any $n,k\in \mathbb{N}$, $k\ge 2$, there exists a
bilipschitz embedding of $L_{n,k}$ into $X$ with distortion at
most $8+\ep$.
\end{theorem}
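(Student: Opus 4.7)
The plan is to mirror the proof of Theorem~\ref{T:Main}, with the bookkeeping adapted to the Laakso structure. First, by Theorem~\ref{T:BS}, given $\ep>0$ it suffices, for each $n,k\in\bbN$, to construct a bilipschitz embedding of $L_{n,k}$ with distortion at most $8$ into an arbitrary Banach space $E$ with an \ESA\ basis $\{e_i\}_{i=1}^\infty$; finite representability of $E$ in $X$ then promotes this to an embedding of $L_{n,k}$ into $X$ with distortion at most $8+\ep$.

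Second, I would adapt the labelling scheme of Section~\ref{S:DescrD} to $L_{n,k}$. Since each edge of $L_{1,k}$ has length $\tfrac14$ and each recursion step quadruples the number of edges along any top-to-bottom geodesic, the natural set of ``levels'' of $L_{n,k}$ is $B^L_n\DEF\{t/4^n:0\le t\le 4^n\}$, and a vertex is labelled by its level together with a branch-label whose length equals the number of recursion steps in which the vertex lies in the ``midpoint layer''. The quarter-point vertices (the $s_1$- and $t_1$-type vertices of each sub-$L_{1,k}$) contribute no new branch coordinate. Analogues of Observations~\ref{O:vertex}--\ref{O:NestSubdiam} then hold verbatim, recording a nested sequence $\Sigma_0(v)\supset\Sigma_1(v)\supset\dots$ of sub-Laakso graphs containing $v$. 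With this setup I would define the image of the bottom vertex to be $0$ and the image of the top vertex to be
\[
y^{(n)}_1\DEF\sum_{\nu=0}^{2^M-1}S^{4^{n+1}\nu}(h^{(n)}_L),\qquad h^{(n)}_L\DEF\mathbf{1}_{[1,\,2\cdot 4^n]}-\mathbf{1}_{[2\cdot 4^n+1,\,4^{n+1}]},
\]
where $M$ equals the total number of distinct branch labels occurring in $L_{n,k}$. Each quarter-point vertex is mapped block-wise and deterministically, by replacing $h^{(n)}_L$ in every block with the appropriate $s_1$- or $t_1$-type sub-vector $\mathbf{1}_J-\mathbf{1}_{\RR(J)}$, where $J$ is a prescribed initial or terminal quarter of the positive support of the currently-active sub-interval; no Rademacher randomization is needed since there is exactly one such vertex per sub-$L_{1,k}$. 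The $v_i$-type branching vertices are, in complete analogy with Section~\ref{S:n,k}, defined by selecting between two ``halvings'' of the currently-active positive interval in each block, driven by independent Rademacher functions $r_A$ indexed by branch labels $A$.

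Finally, the distortion estimate follows by a case analysis in the spirit of Section~\ref{S:EstDistor}. Two generic cases arise: pairs of vertices lying on a common top-to-bottom geodesic---including any pair containing a quarter-point on that geodesic---are mapped so that their images are at distance exactly $\|y^{(n)}_1\|\cdot d_{L_{n,k}}(\cdot,\cdot)$, by the same triangle-inequality/telescoping argument used in equations~\eqref{bottom-v}--\eqref{vertical}; pairs of branching vertices lying in different branches of some smallest common sub-$L_{1,k}$ are handled by SA/ESA consolidation combined with Rademacher independence, exactly as in Cases~\ref{I:C2a}--\ref{I:C3} of Section~\ref{S:EstDistor}, and yield the same constant $K\le 8$. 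The main obstacle is the enlarged combinatorial bookkeeping: the nested-subdiamond structure of Observation~\ref{O:NestSubdiam} must be replaced by a nested-sub-Laakso structure, and one must check that every pair of vertices of $L_{n,k}$ falls into one of the two case-types above, with the smallest common sub-$L_{1,k}$ determined uniquely from the pair of branch-labels. No fundamentally new estimate is needed; the mechanism that produced the constant $8$ for the diamond---namely, SA/ESA-consolidation of the difference of images within a quarter of the Rademacher blocks down to a clean sum of spread $(e_1-e_2)$'s---survives unchanged, and delivers the same bound $8+\ep$ for $L_{n,k}$.
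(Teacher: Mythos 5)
Your overall strategy is the paper's: reduce to a space with an \ESA\ basis via Theorem~\ref{T:BS}, use tetradic levels with branch labels, randomize only at the $k$-fold branching layer via Rademacher functions indexed by labels, and obtain the lower bounds by SA/ESA consolidation on a quarter of the blocks, giving the same constant $8+\ep$. The genuine gap is exactly at the one point where the Laakso case is \emph{not} a verbatim copy of Section~\ref{S:n,k}: the images of the quarter-level ($s_1$-type) and three-quarter-level ($t_1$-type) vertices, and where inside the active interval the branching layer may live. Taken literally, sending the $t_1$-type vertex of a sub-copy of $L_{1,k}$ to $\one_J-\one_{\RR(J)}$ with $J$ a single prescribed quarter of the active interval cannot work: its per-block positive support must exceed that of the bottom of the sub-copy by \emph{three} quarters of the active interval, and it must contain the supports of the images of \emph{all} $k$ branching vertices below it, for every outcome of the Rademacher functions. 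Already in $L_{1,k}$, putting $x_{t_1}=h^{(2)}_{++}$ gives $\|x_t-x_{t_1}\|=\tfrac34\|x_t\|$ while $d_{L_{1,k}}(t_1,t)=\tfrac14$, so even the Lipschitz/vertical-isometry property fails; the correct choice (the paper's warmup in Section~\ref{S:Laakso}) is $x_{t_1}=h^{(2)}_{--}+h^{(2)}_{+-}+h^{(2)}_{++}$, whose positive support is not an interval and is not a quarter. For the same reason the branching vertices cannot select "between two halvings of the currently-active positive interval" as in the diamond: they must select between the two quarters $I^{(2n)}_{\e,+,\pm}$ forming the \emph{upper half} of the active interval, the two quarters of the lower half being reserved for the segments from the bottom to the $s_1$-type vertex and from the $t_1$-type vertex to the top; otherwise the inclusions $P(s_1,\nu)\subseteq P(v_i,\nu)\subseteq P(t_1,\nu)$, which produce the exact vertical isometry, the Lipschitz bound, and the possibility of iterating, cannot hold simultaneously for all branches.

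Once this bookkeeping is corrected your plan does go through and is then essentially the paper's proof: the paper encodes the needed invariant as conditions (C1)--(C4) (in each block, each edge of the vertical skeleton adds exactly one dyadic subinterval of the form $I^{(2n)}_{\e}$, disjoint from everything added before), and this rests on the uniqueness of the cut vertices $v^-,v^+$ of Remark~\ref{uniqueness}, which is what replaces --- rather than reproduces verbatim --- the nested-subdiamond structure of Observation~\ref{O:NestSubdiam}. Two of your deviations are harmless: taking blocks of length $4^{n+1}$ instead of $2^{2n+1}$ merely doubles the block, and making the quarter-point choices deterministic (the paper randomizes them by $r_{J^\bigstar}$) is fine, since all separation estimates for incomparable pairs use only the Rademacher functions attached to the two diverging branch labels. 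Finally, the case analysis is not literally "unchanged": one needs the Laakso-specific observations that for incomparable $u,v$ with common vertex $w$ below both, at least one of their levels does not exceed the level of $w$ plus $4^{-t(\om)}$ and $d_{L_{n,k}}(u,v)\le 2\cdot 4^{-t(\om)}$, which is what plays the role of the dichotomy between Cases~\ref{I:C2a} and \ref{I:C3}; this adaptation is routine but must be checked.
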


We start with an adaptation of the construction  in
Section~\ref{S:1,k}, to show
 an embedding of $L_{1,k}$ into  spaces with an \ESA\
basis, that  illustrates the pattern that will be iterated to
construct embeddings of $L_{n,k}$, for arbitrary $n\in \bbN$. We
note, that similarly as in the case of diamonds one can easily
find a bilipschitz embeddings  with small  distortions of
$L_{1,k}$ into any infinite-dimensional Banach space. As in
Section~\ref{S:D1k}, the usefulness of the construction described
below is  in the existence of a suitable iteration, that leads to
a low-distortion embedding of $L_{n,k}$, and in building an
intuition for a general embedding.

Using the same notation as before, we start from the element analogous to the element $h$ in Section~\ref{S:1,k}, but with twice as many nonzero coordinates.
$$h^{(2)}=++++----.$$

As before, let $M\ge k$ and
$r_1,\dots,r_M$ be the (natural analogues of) Rademacher
functions on $\{1,2,3,\dots,2^M\}$. We define the image of the bottom vertex $s$ to be $0$, and the images of the  vertices $t$,  $t_1$, $s_1$ as follows
\begin{align*}
x_t&=\sum_{\nu=0}^{2^M-1} S^{8\nu}(h^{(2)})=++++----......++++----,\\
x_{t_1}&=\sum_{\nu=0}^{2^M-1} S^{8\nu}(h^{(2)}_{--}+h^{(2)}_{+-}+h^{(2)}_{++} ) =+0++--0-......+0++--0-,\\
x_{s_1}&=\sum_{\nu=0}^{2^M-1} S^{8\nu}(h^{(2)}_{--}) =+000000-......+000000-.
\end{align*}

For every $i\in\{1,\dots,k\}$, the images of the  vertices $v_i$,
are defined to be
\begin{equation*}
\begin{split}
x_{v_i}&=\sum_{\nu=0}^{2^M-1} S^{8\nu}(h^{(2)}_{--}+h^{(2)}_{+,r_i(\nu)}).
\end{split}
\end{equation*}

Thus we define the embedding so that the independent random selection of elements in the middle,  which mimics the properties of the embedding in Section~\ref{S:1,k}, occurs on the supports of shifted copies of $h^{(2)}_{+}$.

 By IS and ESA of the basis we have
\begin{align*}
\|x_{t}\|&=4\left\|\sum_{\nu=0}^{2^M-1} S^{2\nu}(e_1-e_2)\right\|
=4\|\underbrace{+-+-....+-}_{2^M {\text {\small\  pairs}}}\|,\\
\|x_{t_1}\|&= \|x_{t}-x_{s_1}\|= 3\left\|\sum_{\nu=0}^{2^M-1} S^{2\nu}(e_1-e_2)\right\|=\frac34\|x_{t}\|,\\
\|x_{v_i}\|&= \|x_{t_1}-x_{s_1}\|= \|x_{t}-x_{v_i}\|=2\left\|\sum_{\nu=0}^{2^M-1} S^{2\nu}(e_1-e_2)\right\|=\frac24\|x_{t}\|,\\
\|x_{s_1}\|&= \|x_{t}-x_{t_1}\|= \|x_{t_1}-x_{v_i}\|= \|x_{v_i}-x_{s_1}\|= \left\|\sum_{\nu=0}^{2^M-1} S^{2\nu}(e_1-e_2)\right\|=\frac14\|x_{t}\|.
\end{align*}

Moreover, as in Section~\ref{S:1,k}, we  observe that when $i\ne j$, for   one
quarter of the values of $\nu$, we have $r_i(\nu)=1, r_j(\nu)=-1$. By SA, without increasing the norm, we can replace all the remaining blocks by zeros, and we obtain
\begin{equation*}\lb{D1level12}
\begin{split}
\|x_{v_i}-x_{v_j}\|&=\|\sum_{\nu=0}^{2^M-1} S^{8\nu}(h^{(2)}_{+,r_i(\nu)}-h^{(2)}_{+,r_j(\nu)})\|
\ge\|\sum_{\nu=0}^{2^{M-2}-1} S^{8\nu}(h^{(2)}_{+,+}-h^{(2)}_{+,-})\|\\
&=\|\sum_{\nu=0}^{2^{M-1}-1} S^{2\nu}(-e_1+e_2)\|\ge\frac18\|x_t\|.
\end{split}
\end{equation*}

Thus, as in Section~\ref{S:1,k}, we obtained an embedding of
$L_{1,k}$ with  distortion $\le 4$.
As before, the most important feature of this construction is that it can be
iterated without large increase of distortion, as we outline
below.

To describe an embedding of $L_{n,k}$ for any $n,k\in\bbN$, we develop a method of labelling the vertices of $L_{n,k}$, similar to that in Section~\ref{S:DescrD}.

We will say that a  vertex of $L_{n,k}$  is at the {\it level}
$\la$, if its distance from the bottom vertex is equal to $\la$.
Then $T_n\DEF\{\frac{t}{4^n} : 0\le t \le 4^n\}$ is the set of all
possible levels. For each $\lambda\in T_n$ we consider its tetradic
expansion
\begin{equation}\lb{tetradic}
\la=\sum_{\al=0}^{t(\la)} \frac{\la_\al}{4^\al},
\end{equation}
where $0\le t(\la)\le n$, $\la_\al\in\{0,1,2,3\}$ for each
$\al\in\{0,\dots,t(\la)\}$, and
$\la_{t(\lambda)}\ne 0$ for all $\lambda\ne 0$. We will use the convention
$t(0)=0$. Note that $1\in T_n$ is the only value of $\la\in T_n$
with $\la_0\ne 0$.

As in diamonds, we will say that a path in $L_{n,k}$ is a {\it direct vertical path} if it is a subpath of
a geodesic path that connects  the bottom and the
top vertex  in $L_{n,k}$.

We will say that a vertex $v\in L_{n,k}$ is {\it directly above} a vertex $u\in L_{n,k}$
if there exists a direct vertical path that passes through both vertices $v$ and $u$, and the level of $v$ is greater then the level of $u$. We define similarly  the notion that $v$ is  {\it directly below} a vertex $w$, and we say that a vertex $v$ is {\it between vertices} $u$ and $w$, if $v$ is directly below one of them and directly above the other.

\begin{remark}\lb{uniqueness}
We note here that   the Laakso graphs $L_{n,k}$ have  the following uniqueness property: If $v\in L_{n,k}$ is at the level $\la$ with
$0<t(\la)\le n$,  then there exists a unique vertex in $L_{n,k}$, that we will denote by $v^+$, so that
\begin{equation*}\lb{v+}
d_{ L_{n,k}}(v,v^+)=\frac{4-\la_{t(\la)}}{4^{t(\la)}},
\end{equation*}
and every geodesic path in $L_{n,k}$ that connects $v$ and the top of the graph $L_{n,k}$ has to pass through $v^+$.   Note that $v^+$ is at the level
\begin{equation*}\lb{la+}
\la_+\DEF \la+\frac{4-\la_{t(\la)}}{4^{t(\la)}},
\end{equation*}
and $t(\la_+)<t(\la)$.
Observe that there are many distinct vertices $v, u$ in $L_{n,k}$, even with $v$ and $u$ at different levels, so that $v^+=u^+$, cf. Figure~\ref{F:L2}.

Similarly, there exists a unique vertex $v^-\in L_{n,k}$,   so that
\begin{equation*}\lb{v-}
d_{ L_{n,k}}(v,v^-)=\frac{ \la_{t(\la)}}{4^{t(\la)}},
\end{equation*}
and every geodesic path in $L_{n,k}$ that connects $v$ and the bottom of the graph $L_{n,k}$ has to pass through $v^-$.   Note that $v^-$ is at the level
\begin{equation*}\lb{la-}
\la_-\DEF \la-\frac{\la_{t(\la)}}{4^{t(\la)}},
\end{equation*}
and $t(\la_-)<t(\la)$.

The  property that every vertex $v$ has the unique vertices $v^+$ and $v^-$ defined above is crucial for our construction of an embedding of $L_{n,k}$.
Note that diamonds $D_{n,k}$ have  a similar uniqueness property, cf.
Observation~\ref{O:NestSubdiam}. It is possible that our construction may be adapted for every family of series parallel graphs that posses such a uniqueness property, but we have not checked this carefully.
\end{remark}

We will  label each vertex $v$ of the graph $L_{n,k}$  by its
level $\la$, and by an ordered $\g$-tuple  $J=J(v)$ of numbers
from the set $\{1,\dots,k\}$, where $\g\in\{0,\dots,n\}$,
$v=v^{(n)}_{\la,J}$ (we do allow $J=\emptyset$).  We call $J$
   the {\it label of the branch of the vertex $v$}. We will define labels $J$ inductively
   on the  value of $t(\la)$ of the level  $\la$ of the vertex, so that for every vertex $v$ at the level $\la$ the length of the label $J(v)$ is smaller than or equal to $t(\la)$, and with the property that
 two vertices $u,v$ are connected by a direct vertical path \wtw\ either $J(v)=J(u)$,
or one of them is an initial segment of the other (the higher
vertex does not necessarily have a longer label).  The inductive
procedure is as follows, cf. Figure~\ref{F:L2}:

\begin{figure}[h]
\includegraphics[scale=1.1]{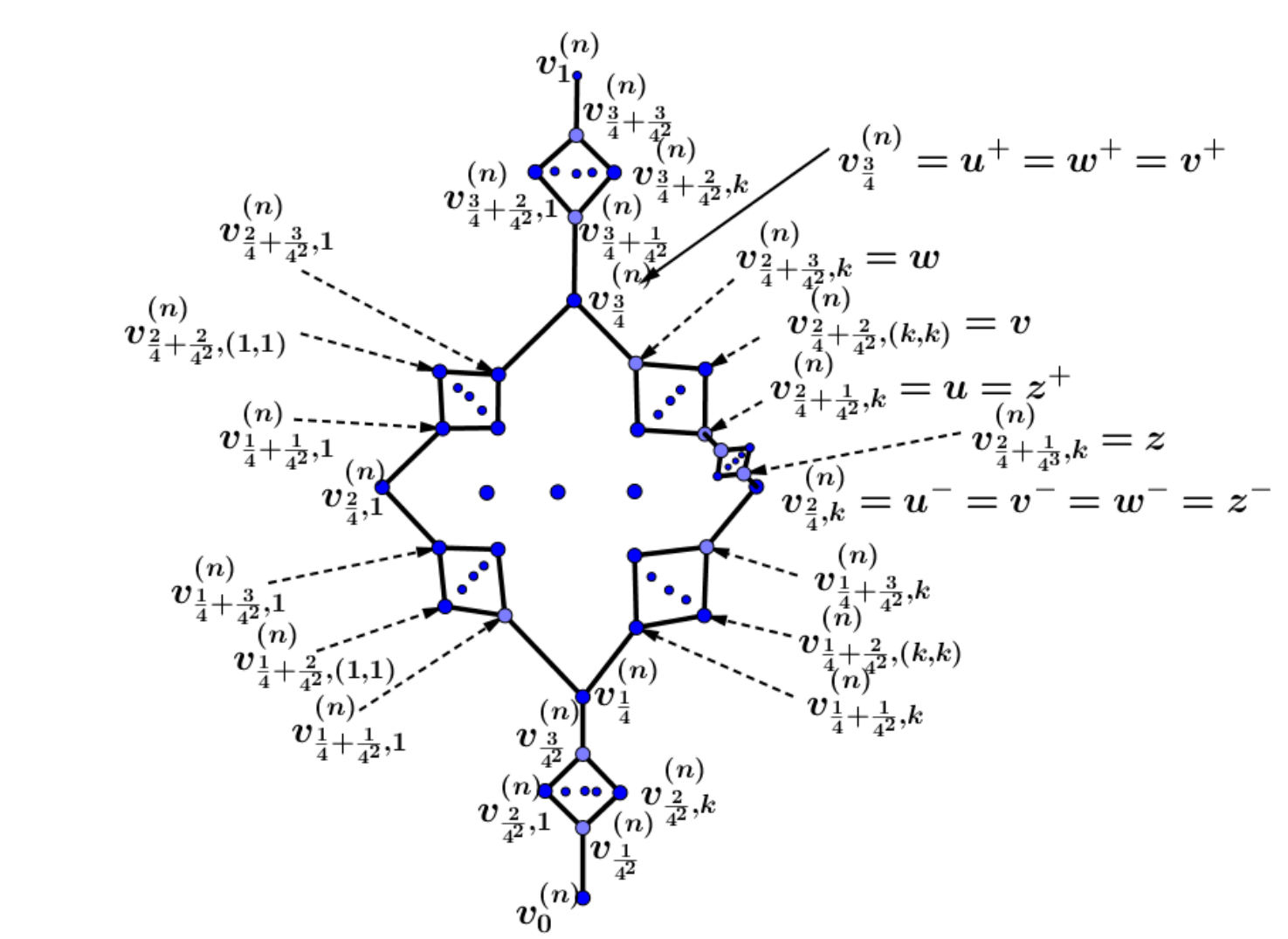}
\caption{The labels of the Laakso graph $L_{2,k}$ (with a small
part of $L_{3,k}$)}\label{F:L2} \end{figure}

\begin{itemize}
\item $t(\la)=0$: The bottom vertex is labelled $v^{(n)}_0$, and the top
vertex is labelled $v^{(n)}_1$.

\item   $t(\la)=1$: There are two possibilities:

$(i)$ $\la=\frac14$ or  $\la=\frac34$: at this  level there is one vertex, to which we  assign $J=\emptyset$.

$(ii)$ $\la=\frac24$: at this level there are $k$ vertices, to each of which we  assign $J=(j)$, for each vertex a different value of $j\in\{1,\dots,k\}$,

\item $t(\la)=t+1$, where $1\le t<n$,  and for
all $\mu\in T_n$ with $t(\mu)\le t$, all vertices at level $\mu$
have been labelled.

Since $t(\la)=t+1$, $\la_{t+1}\ne 0$. We consider vertices $v^-$
and $v^+$ defined in Remark~\ref{uniqueness}. Then $v$ is between
$v^-$ and $v^+$, also $t(\la_-)\le t$ and  $t(\la_+)\le t$. Thus
both vertices $v^-$, $v^+$ already have labels $J(v^-)$, $J(v^+)$,
respectively, and either $J(v^-)=J(v^+)$, or one of them is an
initial segment of the other. Denote by $J^\bigstar$ a longer of the
two labels $J(v^-)$, $J(v^+)$. Now we consider two possibilities:

$(i)$ if $\la_{t+1}=1$ or  $\la_{t+1}=3$, then we assign to the branch of $v$ the label  $J^\bigstar$,

$(ii)$ if $\la_{t+1}=2$, then we assign to  the branch of $v$   the label  whose
initial segment is $J^\bigstar$, which is followed by one of the numbers $j\in\{1,\dots,k\}$, thus the length of the label is increased by 1 compared to $J^\bigstar$.
\end{itemize}

Note that the set of all possible labels of branches is equal to  $ \{\emptyset\}\cup\mathcal{P}$, where $\mathcal{P}$ is the set  of all tuples
$(j_1,\dots,j_s)$ of all lengths between $1$ and $n$, with
$j_i$ is in $\{1,\dots,k\}$. Recall that
$M\DEF \card({\mathcal{P}})= k+k^2+\dots+k^n$.

Now we are ready to define a bilipschitz embedding of $L_{n,k}$
into a Banach space $X$ with an ESA basis. We shall denote the image of
$v^{(n)}_{\la;J}$ in $X$ by
 $x^{(n)}_{\la;J}$, and we use the same notation as in Section~\ref{S:n,k}.

We define the image of the bottom vertex $v_0^{(n)}$ of  $L_{n,k}$
to be zero (that is, $x_0^{(n)}=0)$, and the image of the top
vertex to be the element $x^{(n)}_1$ that is defined as the sum of
$2^M$ disjoint shifted copies of $h^{(2n)}$, that is,
\begin{equation}\lb{defLtop}
x^{(n)}_1=\sum_{\nu=0}^{2^{M}-1} S^{2^{2n+1}\nu}(h^{(2n)}).
\end{equation}

Note that, by IS and ESA of the basis we have
\begin{equation*}\lb{norm-top-Laakso}
\begin{split}
\|x^{(n)}_1\|&=4^n\Big\|\sum_{\nu=0}^{2^{M}-1}
S^{2^{2n+1}\nu}(e_1-e_2)\Big\|=4^n\Big\|\sum_{\nu=0}^{2^{M}-1}
S^{2\nu}(e_1-e_2)\Big\|.
\end{split}
\end{equation*}

Next we describe an inductive process to define elements
$x^{(n)}_{\la,J}$ for all vertices $v^{(n)}_{\la,J}\in L_{n,k}$.

Note   that in Section~\ref{S:VerbDef} the  definition of the
image of a vertex in $D_{n,k}$ was obtained in several steps,
through an inductive procedure that used the dyadic representation
of the level of the vertex  in $D_{n,k}$ and the label of the
branch, and did not explicitly use images of any other  vertices
in the diamond.

In the inductive procedure described below, the induction is also
on the level of the vertex (or, more precisely, on the length of
the tetradic representation of the level of the vertex), but it
depends on the images of other elements in the graph $L_{n,k}$
whose levels have shorter tetradic representations, and the label
of the branch of the vertex  is not used explicitly.

The structure of elements $x^{(n)}_{\la,J}$ will be similar   to
the images of vertices under the embedding of the diamonds into
$X$, that is, $x^{(n)}_{\la,J}$  also will be composed of $2^M$
blocks, and each block is a finite sum of disjointly supported
elements of the form $S^{2^{2n+1}\nu}h^{(2n)}_\e$, where $\e$ is a
$\g$-tuple of $\pm1$, for some $\g\in\{0,1,\dots,2n\}$, where the
tuples $\e$   may depend on the number of the block $\nu$. Recall
that by  definition $h^{(2n)}_\e$ is $\{0,\pm1\}$-valued, its
support is contained in $[1, 2^{2n+1}]$, symmetric about the
center of this interval, and coordinates of $h^{(2n)}_\e$ are
equal to $+1$ on the first half of the support, and to $-1$ on the
second half of the support. Thus, similarly as in
Section~\ref{S:VerbDef}, each  element
$S^{2^{2n+1}\nu}h^{(2n)}_\e$ is uniquely determined by the portion
of its support contained in the interval
$S^{2^{2n+1}\nu}I^{(2n)}$, and since we are summing disjointly
supported elements  of this form, it is enough to describe the
sets $P(v^{(n)}_{\la, J},\nu)\DEF P(v^{(n)}_{\la, J})\cap
S^{2^{2n+1}\nu}I^{(2n)}$, for each $\nu\in\{0,\dots,2^M\}$, where
$P(v^{(n)}_{\la, J})\subset\mathbb{N}$ is the set where
$x^{(n)}_{\la, J}$ is equal to $1$. Recall, that for any tuple
$\e$, we denoted by $I^{(2n)}_\e$ the part of the support of
$h^{(2n)}_\e$ where the values of the coordinates of $h^{(2n)}_\e$
are equal to $+1$, and that
 $I^{(2n)}_\e\subseteq I^{(2n)} $.

 Next we will define  inductively the sets
$P(v,\nu)\subseteq I^{(2n)} $ so that for every $v=v^{(n)}_{\la,J}\in L_{n,k}$, and
every $\nu\in\{0,\dots,2^{M}-1\},$
\begin{equation}\lb{cardLP}
\begin{split}
\card(P(v^{(n)}_{\la,J},\nu))= 4^n\la,
\end{split}
\end{equation}
and
with the property that if the vertex $v$ is directly above the vertex $u\in  L_{n,k}$, then
\begin{equation}\lb{setsLP}
\begin{split}
P(v,\nu)= P(u,\nu)\cup S^{2^{2n+1}\nu} \left(\bigcup_{\e\in A} I^{(2n)}_\e)\right),
\end{split}
\end{equation}
for some subset $A=A(v,u,\nu)$     of the set ${\mathcal{H}}$  of  all $\pm1$-valued
tuples of all lengths between 0 and $2n$   uniquely determined by the following conditions:
\begin{itemize}
\item[(C1)] if the  tuples $\e$ and $\de$ are in   $A$, then the sets
$I^{(2n)}_\e$ and $I^{(2n)}_\de$ are disjoint,
\item[(C2)] if  a tuple $\e\in A$, then
$I^{(2n)}_\e$ is disjoint with the set $P(u,\nu)$,
\item[(C3)] for every $\e\in{\mathcal{H}}$, at most one of the tuples $(\e,+)$ and $(\e,-)$ can belong to $A$, that is, if
 $$ S^{2^{2n+1}\nu} \left(I^{(2n)}_{\e,+}\cup I^{(2n)}_{\e,-}\right)= S^{2^{2n+1}\nu} \left(I^{(2n)}_{\e}\right)\subseteq P(v,\nu)\setminus P(u,\nu),$$ then $\e\in A$, and $(\e,+)\not\in A$,  $(\e,-)\not\in A$; that is
the set $A$ has the smallest possible cardinality,
\item[(C4)] if  there exists a vertex $w\in L_{n,k}$, so that $v=w^+$ and $u=w^-$, then the set     $A$ consists of exactly one tuple which we will denote by
$\e(w^-,w^+,\nu)$, i.e.
for every $w\in L_{n,k}$, if $w$ is neither the top nor the bottom of $L_{n,k}$, then there exists a   tuple $\e(w^-,w^+,\nu)\in {\mathcal{H}}$  so that $I^{(2n)}_{\e(w^-,w^+,\nu)}$ is disjoint with $P(w^-,\nu)$, and
\begin{equation*}
P(w^+,\nu)= P(w^-,\nu)\cup   S^{2^{2n+1}\nu}
\left(I^{(2n)}_{\e(w^-,w^+,\nu)}\right).
\end{equation*}
\end{itemize}

The induction is on the value of $t(\la)$ of the level $\la$ that is represented in the form \eqref{tetradic}.

\begin{enumerate}
\item  $t(\la)=0$

(i) If  $\la_0=1$, for every $\nu$, we let $P(v,\nu)\DEF
S^{2^{2n+1}\nu}I^{(2n)}$.

It is clear that this happens if and only if $\la=1$ and the
vertex is $v_1^{(n)}$. Notice that this agrees with the formula
\eqref{defLtop}. In this case we have
$\card(P(v_1^{(n)},\nu))=\card(I^{(2n)})=4^n=4^n\la$, so
\eqref{cardLP} and \eqref{setsLP} are satisfied.

(ii) If  $\la_0=0$, we let $P(v,\nu)\DEF
S^{2^{2n+1}\nu}\emptyset=\emptyset$.

It is clear that this happens if and only if $\la=0$ and the vertex is
$v_0^{(n)}$. This agrees with the condition that $x_0^{(n)}=0$, and
\eqref{cardLP} and \eqref{setsLP} clearly hold.

It is clear that conditions (C1)-(C3) are satisfied in both cases
(i) and (ii). Moreover, we agree that
$h^{(2n)}=h^{(2n)}_{\emptyset}$, so   the condition (C4)  holds
for every $w\in L_{n,k}$ with $w^-=v^{(n)}_0$ and $w^+=v^{(n)}_1$,
that is  for every $w\in L_{n,k}$ at the level $\al$ with
$t(\al)=1$.

\item $t(\la)=t+1$

Suppose that for all $\mu\in T_n$ with $t(\mu)\le t$, for all
$\nu$, and all vertices $u=v^{(n)}_{\mu,J(u)}$ the sets $P(u,\nu)$
are defined in such a way that the conditions \eqref{cardLP},
\eqref{setsLP}, and (C1)-(C3) are satisfied, and the condition
(C4)  holds for every $w\in L_{n,k}$ at the level $\al$ with
$t(\al)\le t$.

 Let $v=v_{\la,J}=v^{(n)}_{\la,J}\in L_{n,k}$, $t(\lambda)=t+1$, and let
$\la_-$, $\la_+$,   $v^-$, $v^+$ be defined as in Remark~\ref{uniqueness}. Since
$t(\la_-)\le t$ and  $t(\la_+)\le t$, by      (C4), for every $\nu$, there exists a   tuple $\e(v,\nu)\in {\mathcal{H}}$  so that
$I^{(2n)}_{\e(v^-,v^+,\nu)}$ is disjoint with the set $P(v^-,\nu)$, and
\begin{equation*}
P(v^+,\nu)= P(v^-,\nu)\cup  S^{2^{2n+1}\nu} \left(I^{(2n)}_{\e(v^-,v^+,\nu)}\right).
\end{equation*}

Moreover, by  \eqref{cardLP}, we have
\begin{equation}\lb{extracard}
  \begin{split}
  \card\Big(I^{(2n)}_{\e(v^-,v^+,\nu)}\Big)
  &=\card(P(v^+,\nu))-\card(P(v^-,\nu))\\
&=4^n(\la^+-\la^-)=4^{n-t}
\end{split}
\end{equation}

We define the set $P(v,\nu)$ depending on the value of
$\la_{t+1}$, as follows:

(i) If $\la_{t+1}=1$, we define for all values of $\nu$,
\begin{equation*}
P(v^{(n)}_{\la,J},\nu)\DEF P(v^-,\nu)\cup S^{2^{2n+1}\nu}
\left(I^{(2n)}_{\e(v^-,v^+,\nu),-,r_J(\nu)}\right),
\end{equation*}
where, as in Section~\ref{S:D1k},
for $J\in\mathcal{P}$,  $r_J$  denotes the (natural analogue of)
Rademacher function on $\{0,\dots,2^{M}-1\}$, and $r_{\emptyset}(\nu)\DEF+1$ for all
$\nu\in\{0,\dots,2^{M}-1\}$.

Thus, in this case,
$
A(v^{(n)}_{\la,J},v^-,\nu)=\{(\e(v^-,v^+,\nu),-1,r_J(\nu))\},
$
 and
\begin{equation*}
\begin{split}
P(v^+,\nu)&=P(v^{(n)}_{\la,J},\nu) \cup
S^{2^{2n+1}\nu} \left(I^{(2n)}_{\e(v^-,v^+,\nu)}\setminus I^{(2n)}_{\e(v^-,v^+,\nu),-,r_J(\nu)}\right)\\
&=P(v^{(n)}_{\la,J},\nu) \cup S^{2^{2n+1}\nu}
\left(I^{(2n)}_{\e(v^-,v^+,\nu),-,-r_J(\nu)}\cup
I^{(2n)}_{\e(v^-,v^+,\nu),+}\right),
\end{split}
\end{equation*}
which implies that
\begin{equation*}\lb{added-block-level1u}
A(v^+,v^{(n)}_{\la,J},\nu)=\{(\e(v^-,v^+,\nu),-1,-r_J(\nu)),
(\e(v^-,v^+,\nu),+1)\}.
\end{equation*}

 (ii) If $\la_{t+1}=2$, we define
\begin{equation*}
P(v^{(n)}_{\la,J},\nu)\DEF  P(v^-,\nu)\cup  S^{2^{2n+1}\nu}
\left(I^{(2n)}_{\e(v^-,v^+,\nu),-,r_{J^\bigstar}(\nu)}\cup
I^{(2n)}_{\e(v^-,v^+,\nu),+,r_J(\nu)}\right),
\end{equation*}
where $J^\bigstar$ is the initial segment of $J$ of length one
less than the length of $J$, that is, by the definition of
labelling, $J^\bigstar$ is the longer of the labels $J(v^-)$ and
$J(v^+)$ (note that $J^\bigstar$ is also the label of the branch
of the vertex  on the level $\la-\frac{1}{4^{t+1}}$ that is
directly below $v$, which is the vertex  that we discussed in item
(i)).

Similarly as in case (i), we obtain
\begin{equation*}
\begin{split}
P(v^+,\nu)&=P(v^{(n)}_{\la,J},\nu) \cup S^{2^{2n+1}\nu}
\left(I^{(2n)}_{\e(v^-,v^+,\nu)}\setminus \Big(I^{(2n)}_{\e(v^-,v^+,\nu),-,r_{J^\bigstar}(\nu)}\cup I^{(2n)}_{\e(v^-,v^+,\nu),+,r_J(\nu)}\Big)\right)\\
&=P(v^{(n)}_{\la,J},\nu) \cup S^{2^{2n+1}\nu}
\left(I^{(2n)}_{\e(v^-,v^+,\nu),-,-r_{J^\bigstar}(\nu)}\cup
 I^{(2n)}_{\e(v^-,v^+,\nu),+,-r_J(\nu)})\right).
\end{split}
\end{equation*}
Thus
\begin{eqnarray*}\lb{added-block-level2}
A(v^{(n)}_{\la,J},v^-,\nu)&=&\{(\e(v^-,v^+,\nu),-1,r_{J^\bigstar}(\nu)),
(\e(v^-,v^+,\nu),+1,r_J(\nu))\},\\
\lb{added-block-level2u}
 A(v^+,v^{(n)}_{\la,J},\nu)&=&\{(\e(v^-,v^+,\nu),-1,-r_{J^\bigstar}(\nu)), (\e(v^-,v^+,\nu),+1,-r_J(\nu)) \}.
 \end{eqnarray*}

(iii) If $\la_{t+1}=3$, we define for all values of $\nu$,
\begin{equation*}
P(v^{(n)}_{\la,J},\nu)\DEF  P(v^-,\nu)\cup S^{2^{2n+1}\nu}
\left(I^{(2n)}_{\e(v^-,v^+,\nu),-,r_J(\nu)}\cup
I^{(2n)}_{\e(v^-,v^+,\nu),+}\right).
\end{equation*}

We note that when $\la_{t+1}=3$, then the label $J$ of the branch of the vertex $v$ is the same as the label
 of the branch of the vertex  on the level $\la-\frac{2}{4^{t+1}}$ that is directly below $v$.

Similarly as in previous cases, we obtain
\begin{equation*}
\begin{split}
P(v^+,\nu)&=P(v^{(n)}_{\la,J},\nu) \cup S^{2^{2n+1}\nu}\left(
I^{(2n)}_{\e(v^-,v^+,\nu)}\setminus\Big(I^{(2n)}_{\e(v^-,v^+,\nu),-,r_J(\nu)}\cup I^{(2n)}_{\e(v^-,v^+,\nu),+}\Big)\right)\\
&=P(v^{(n)}_{\la,J},\nu) \cup S^{2^{2n+1}\nu}
\left(I^{(2n)}_{\e(v^-,v^+,\nu),-,-r_J(\nu)}\right).
\end{split}
\end{equation*}
Thus \[A(v^{(n)}_{\la,J},v^-,\nu)=\{(\e(v^-,v^+,\nu),-1,r_J(\nu)), (\e(v^-,v^+,\nu),+1)\},\]
 \[A(v^+,v^{(n)}_{\la,J},\nu)=\{(\e(v^-,v^+,\nu),-1,-r_J(\nu))  \}.\]

Since for all $\de_1,\de_2\in\{\pm1\}$ and all $\e\in
{\mathcal{H}}$, we have
$I^{(2n)}_{\e,\de_1,\de_2} \subseteq
I^{(2n)}_{\e,\de_1} \subseteq  I^{(2n)}_{\e} $,
in all cases (i)-(iii),  each of the sets
$A(v^{(n)}_{\la,J},v^-,\nu)$ and $A(v^+,v^{(n)}_{\la,J},\nu)$
satisfies the conditions (C1)-(C3). Thus \eqref{setsLP} is satisfied for
both pairs of vertices $(v^{(n)}_{\la,J},v^-)$ and
$(v^+,v^{(n)}_{\la,J})$, and by the Inductive Hypothesis and
Remark~\ref{uniqueness}, \eqref{setsLP} and (C1)-(C3) hold also
for all other pairs of  vertices.

It is also clear from the above definitions and the
Inductive Hypothesis  that the condition (C4) will hold for all vertices $w$ that are directly between $v^-$ and $v^+$ and such that $w$ is at the level $\al$ with $t(\al)\le t+2$.

Moreover, by \eqref{cardI}, for all $\de_1,\de_2\in\{\pm1\}$ and all $\e\in {\mathcal{H}}$, we have
\begin{align*}
\card\Big(I^{(2n)}_{\e,\de_1}\Big)&=\frac12 \card\Big(I^{(2n)}_{\e}\Big)\\
\card\Big(I^{(2n)}_{\e,\de_1,\de_2}\Big)&=\frac14 \card\Big(I^{(2n)}_{\e}\Big).
\end{align*}

Thus, by \eqref{extracard}, in each of the cases (i)-(iii) we have
\begin{equation*}
  \begin{split}
 \card(P(v^{(n)}_{\la,J},\nu))&=\card(P(v^-,\nu))+
 \frac{\la_{t+1}}{4} =4^n\Big(\la^-+\frac{\la_{t+1}}{4^{t+1}}\Big)=4^n\la,
\end{split}
\end{equation*}
 and
 \begin{equation*}
  \begin{split}
 \card(P(v^+,\nu))&=\card(P(v^{(n)}_{\la,J},\nu))+
 \frac{4-\la_{t+1}}{4} =4^n\Big(\la+\frac{4-\la_{t+1}}{4^{t+1}}\Big)=4^n\la^+.
\end{split}
\end{equation*}
Thus \eqref{cardLP} holds for both pairs of  vertices
$(v^{(n)}_{\la,J},v^-)$ and $(v^+,v^{(n)}_{\la,J})$. By the
Inductive Hypothesis and Remark~\ref{uniqueness}, \eqref{cardLP}
holds for all pairs of  vertices  that include the new vertex
$v^{(n)}_{\la,J}$.
\end{enumerate}

Note that it follows from \eqref{setsLP}  and from  the ESA property of the basis that the embedding defined by this inductive procedure maps endpoints of any direct vertical path in $L_{n,k}$ to points in $X$ that are at the distance equal to
$\|x^{(n)}_1\|$ times the distance between the original points. Thus, similarly as in the case of diamonds, our embedding of Laakso graphs is vertically isometric with the multiplicative constant $\|x^{(n)}_1\|$. In particular, this implies that this embedding is  Lipschitz  with the Lipschitz constant equal to $\|x^{(n)}_1\|$.

To prove that this embedding satisfies the  co-Lipschitz estimate,
it remains to estimate from below the distances between images of
vertices $u, v$  that do not lie on a direct vertical path in
$L_{n,k}$. Note that then every geodesic path $\wp$ that connects
$u$ and $v$ passes through a vertex $w$ that splits the path $\wp$
into two subpaths, each of which is a direct vertical path. Thus
both pairs of vertices $(u,w)$ and $(v,w)$ are connected by a
direct vertical path,
\begin{equation}\lb{LGdistance-uv}
d_{L_{n,k}}(u,v)= d_{L_{n,k}}(u,w)+d_{L_{n,k}}(w,v),
\end{equation}
and  $w$ is either directly below both $u$ and $v$, or $w$ is   directly above both $u$ and $v$.

We will denote the levels of vertices $u,v,w$, by $\mu,\la,\om$, respectively, and the lengths of labels $J(u), J(v), J(w)$ of their branches, by $l(u), l(v), l(w)\in\{0,1,\dots,n\}$, respectively. We denote $J(u)=(i_1,\dots,i_{l(u)})$ if $l(u)>0$ (if $l(u)=0$, $J(u)=\emptyset$),  and $J(v)=(j_1,\dots,j_{l(v)})$ if $l(u)>0$   (if $l(v)=0$, $J(v)=\emptyset$).

We outline the proof in  the case when $w$ is  directly below both $u$ and $v$ (the other case is very similar).

In this case $\om<\la$ and $\om<\mu$. Note also that $w$ belongs to a direct vertical path that connects the bottom $v_0^{(n)}$ of the Laakso graph $L_{n,k}$ with $u$ and to a direct vertical path that connects   $v_0^{(n)}$  with $v$, and the vertex $w$ has the highest possible level among all common vertices of these two direct vertical paths. By Definition~\ref{D:multiLaakso}, cf. Figure~\ref{F:L2}, this maximality property implies that
$$\om_{t(\om)}=1,$$
when the level $\om$ of the vertex $w$ is represented in its tetradic expansion $\om=\sum_{\al=0}^{t(\om)} \frac{\om_\al}{4^\al}$, cf. \eqref{tetradic}.
Moreover, since $u$ is neither directly above nor directly below $v$, and by maximality of the level of $w$, none of the vertices $u$, $v$ can be equal to or be directly above
the vertex $\tilde{w}$ that is directly above $w$ at the level
$\om+\frac{2}{4^{t(\om)}}$, cf. Figure~\ref{F:L3}.
Hence the  levels of both $u$ and $v$ have to be strictly smaller than
$\om+\frac{2}{4^{t(\om)}}$, and since there exists a geodesic path
that connects  $u$ and $v$ and passes through the vertex $w$ at the level $\om$,
at least one of the levels of $u$ and $v$ cannot exceed $\om+\frac{1}{4^{t(\om)}}$.

\begin{figure}[h]
\centering
\includegraphics[scale=1]{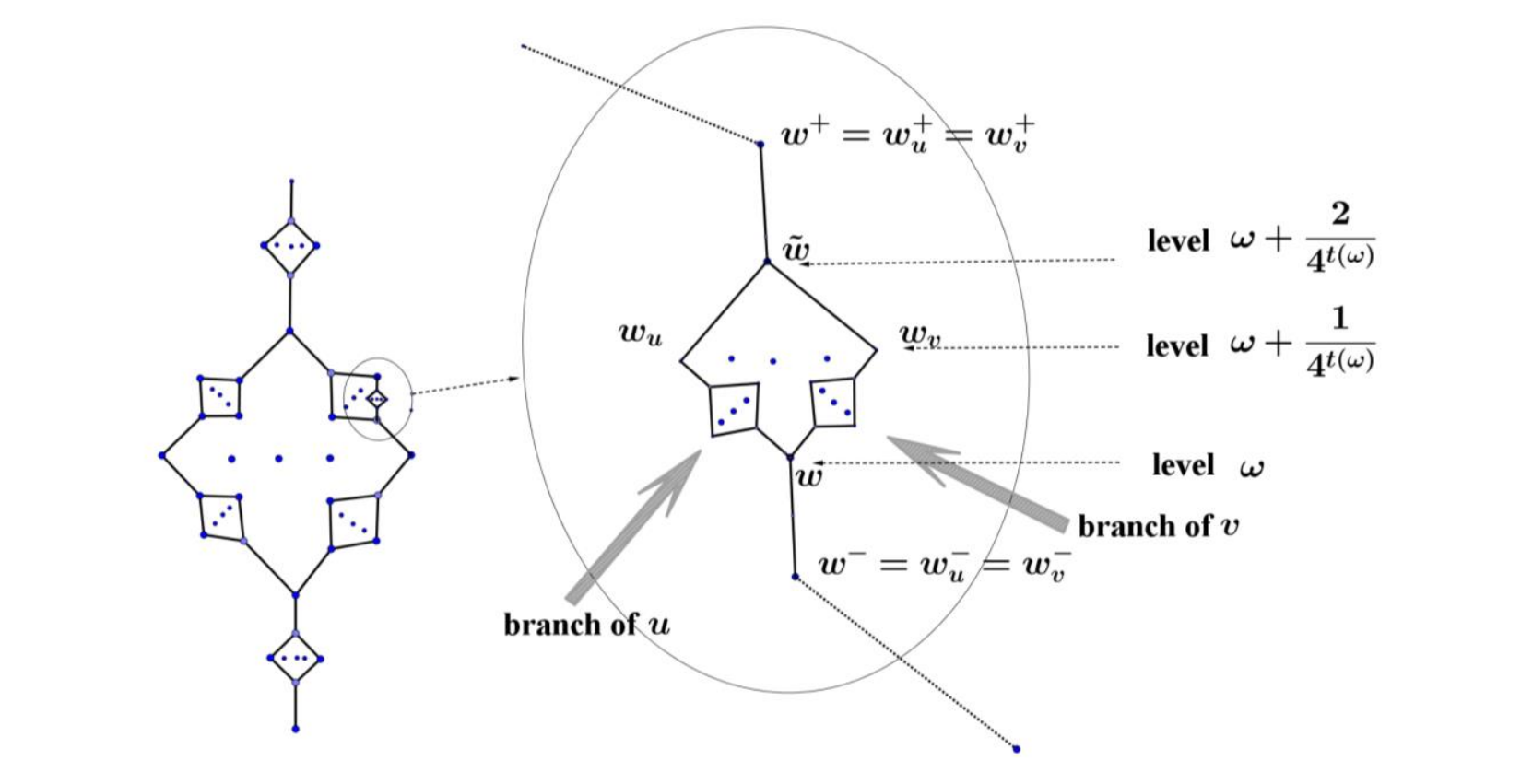}
\caption{Sample positions of vertices $u,v, w, w_u,w_v, \tilde{w}, w^-,w^+$.}\label{F:L3} \end{figure}

Before obtaining a lower estimate on the distance between images
of $u$ and $v$, we need to also look at the structure of the
labels of branches of $u$, $v$,and $w$. Notice that the label
$J(w)$ is the largest common initial segment of labels $J(u)$ and
$J(v)$. Thus $l(u), l(v)$ are both greater than or equal to
$l(w)+1\ge 1$, $(i_1,\dots,i_{l(w)})=(j_1,\dots,j_{l(w)})=J(w),$
where, if $l(w)=0$ then $J(w)=\emptyset$, and
\[i_{l(w)+1}\ne j_{l(w)+1}.\]

We denote $J^\Box(u)\DEF (i_1,\dots,i_{l(w)+1})$, $J^\Box(v)\DEF (j_1,\dots,j_{l(w)+1})$,
$w_u\DEF v^{(n)}_{\om+\frac{1}{2^{t(\om)}},J^\Box(u)}$,
and $w_v\DEF v^{(n)}_{\om+\frac{1}{2^{t(\om)}},J^\Box(v)}$, cf.
Figure~\ref{F:L3}. Note that
\begin{equation*}
w^-=w_u^-=w_v^-\ \  {\text{ and}} \ \ w^+=w_u^+=w_v^+.
\end{equation*}

Recall that, by (C4),  for all $\nu\in \{0,\dots,2^M-1\}$,
  there exists a   tuple $\e(w^-,w^+,\nu)\in {\mathcal{H}}$  so that $I^{(2n)}_{\e(w^-,w^+,\nu)}$ is disjoint with $P(w^-,\nu)$, and
\begin{equation*}
P(w^+,\nu)= P(w^-,\nu)\cup   S^{2^{2n+1}\nu}
\left(I^{(2n)}_{\e(w^-,w^+,\nu)}\right).
\end{equation*}

By the definition of our embedding we have,  for all $\nu\in \{0,\dots,2^M-1\}$,

\begin{equation}\lb{wu}
P(w_u,\nu)=  P(w,\nu)\cup  S^{2^{2n+1}\nu}
\left(
I^{(2n)}_{\e(w^-,w^+,\nu),+,r_{J^\Box(u)}(\nu)}\right),
\end{equation}

\begin{equation}\lb{wv}
P(w_v,\nu)=  P(w,\nu)\cup  S^{2^{2n+1}\nu}
\left(
I^{(2n)}_{\e(w^-,w^+,\nu),+,r_{J^\Box(v)}(\nu)}\right).
\end{equation}

We are now ready to obtain our estimates. \Buo we assume that $\mu\le\la$, and we denote by $x,y$ and $z$ the images of $u,v$, and $w$, respectively. We proceed similarly as in Section~\ref{S:EstDistor}.

Let $G$ be the set  consisting  of all
 $\nu$'s for which $r_{J^\Box(v)}(\nu)=-1$
and $r_{J^\Box(u)}(\nu)=1$. Since both tuples $J^\Box(v)$ and
$J^\Box(u)$ are nonempty and different from each other, by
independence of the  Rademacher functions, the cardinality the set
$G$ is equal to one fourth of the cardinality of the set of all
$\nu$'s, that is to $2^{M-2}$.

We first consider the  situation  similar to Case~\ref{I:C2a} in
Section~\ref{S:EstDistor}, that is  we suppose that both
$$
\la, \mu\le \om+\frac{1}{4^{t(\om)}}.
$$

In this situation, by  \eqref{setsLP},  \eqref{wu}, and  \eqref{wv}, for all
$\nu\in \{0,\dots,2^M-1\}$, the intersection of the support of  $y-x$ with $S^{2^{2n+1}\nu}I^{(2n)}$ is contained in the set
\[ S^{2^{2n+1}\nu}
\left(
I^{(2n)}_{\e(w^-,w^+,\nu),+,r_{J^\Box(v)}(\nu)}\right)\cup S^{2^{2n+1}\nu}
\left(
I^{(2n)}_{\e(w^-,w^+,\nu),+,r_{J^\Box(u)}(\nu)}\right),\]
and, the coordinates of $y-x$ are non-negative on the first portion of this set, and  non-positive  on the second portion of this set.
By \eqref{cardLP},  exactly $4^n(\la-\om)$ coordinates of $y-x$ are non-zero on the first portion of the set, and exactly $4^n(\mu-\om)$ coordinates of $y-x$ are non-zero on the second portion. Thus, for all $\nu\in G$,
 if we consider the restriction of the
difference $y-x$ to the interval
$S^{2^{2n+1}\nu}[1,2^{2n+1}]$ and omit all zeros, we get a vector of the
following form: first it will have $4^n(\la-\om)$ entries with
values equal to $+1$, then it will have $4^n(\mu-\om)$ entries
equal to $-1$, then it will have $4^n(\mu-\om)$ entries equal to
$+1$, and finally it will have $4^n(\la-\om)$ entries equal to
$-1$:
\begin{align*}
\underbrace{+\cdots \dots+}_{4^{n}(\la-\om)}
\underbrace{-\cdots\dots-}_{4^{n}(\mu-\om)}\underbrace{+\cdots\dots+}_{4^{n}(\mu-\om)}\underbrace{-\cdots
\dots-}_{4^{n}(\la-\om)}.
\end{align*}

  As in Case~\ref{I:C2a} in
Section~\ref{S:EstDistor}, we replace by zeros all entries of $y-x$ in all blocks $\nu\not\in G$, and for each $\nu\in G$,
we will replace by zeros the values  on the coordinates of
$y-x$  in the  ``central'' set in the $\nu$-th block. By the SA  property of the basis, this replacement does not
increase the norm of the element. Thus,
  by  the ESA property of the basis,  \eqref{LGdistance-uv}, and a computation very similar to the one  at the end of Case~\ref{I:C2a} in
Section~\ref{S:EstDistor} we get that
\begin{equation*}
\begin{split}
\|y-x\|&\ge  \frac18\|x_1^{(n)}\|d_{L_{n,k}}(v,u).
\end{split}
\end{equation*}

Next we first consider the  situation  similar to Case~\ref{I:C3} in
Section~\ref{S:EstDistor}, that is  we suppose that
\begin{equation}\lb{la-big}
 \mu\le \om+\frac{1}{4^{t(\om)}}<\la.
\end{equation}

Note that since there exists a geodesic path from $u$ to $v$ that
passes through the vertex $w$ on the level $\om\le \mu<\la$, we
have, cf. Figure~\ref{F:L3},
\begin{equation*}
\begin{split}
d_{L_{n,k}}(u,v)=(\mu-\om)+(\la-\om)&\le d_{L_{n,k}}(u,\tilde{w}) +d_{L_{n,k}}(\tilde{w},v)\\
&=\Big((\om+\frac2{4^{t(\om)}})-\mu\Big)+\Big((\om+\frac2{4^{t(\om)}})-\la\Big),
\end{split}
\end{equation*}
and thus
\begin{equation}\lb{dist-uv}
d_{L_{n,k}}(u,v)=(\mu-\om)+(\la-\om)\le \frac2{4^{t(\om)}}.
\end{equation}

Moreover, when \eqref{la-big} holds, the vertex $v$ is directly above $w_v$, and by the definition of our embedding
we have,  for all $\nu\in \{0,\dots,2^M-1\}$,
\begin{equation}\lb{Pv-la-big}
P(w_v,\nu)\subseteq P(v,\nu)\subseteq P(\tilde{w},\nu)=  P(w_v,\nu)\cup  S^{2^{2n+1}\nu}
\left(
I^{(2n)}_{\e(w^-,w^+,\nu),+,-r_{J^\Box(v)}(\nu)}\right).
\end{equation}

As in the
previous case, let $G$ be the set   of all the values
of $\nu$ for which $r_{J^\Box(v)}(\nu)=-1$ and
$r_{J^\Box(u)}(\nu)=1$. Then, by \eqref{wu}, \eqref{wv}, and \eqref{Pv-la-big}, we obtain that for all $\nu\in G$,
$$S^{2^{2n+1}\nu}\left(
I^{(2n)}_{\e(w^-,w^+,\nu),+-}\right)\subseteq P(v,\nu)\setminus P(u,\nu).$$

Note that, by \eqref{extracard} and \eqref{cardI}, $\card(I^{(2n)}_{\e(w^-,w^+,\nu),+-})=4^{n-t(\om)-1}$.
Therefore, similarly as in  Case~\ref{I:C3} in
Section~\ref{S:EstDistor},      we have
\begin{equation*}\lb{case5}
\begin{split}
\|y-x\|&\ge \Big\|\sum_{\nu\in G} (y-x)\cdot
\one_{S^{2^{2n+1}\nu}[1,2^{2n+1}]}\Big\|\\
&\ge 4^{n-t(\om)-1}\Big\|\sum_{\nu\in G}
S^{2^{2n+1}\nu}(e_1-e_2)\Big\|\\
&\ge\frac18\frac2{4^{t(\om)}}\|x_1^{(n)}\|\\
&\stackrel{{\rm by \eqref{dist-uv}}}{\ge} \frac18\|x_1^{(n)}\|d_{L_{n,k}}(u,v).
\end{split}
\end{equation*}

This ends the outline of the proof of the lower estimate in the case when  $w$ is  directly below both $u$ and $v$. The case when $w$ is  directly above both $u$ and $v$ is proved similarly, we omit the details.

\section{Acknowledgement}

The first named author gratefully acknowledges the support by the
National Science Foundation grant DMS-1201269 and by the Summer Support of
Research program of St. John's University during different stages
of work on this paper. Part of the work on this paper was done
when both authors were participants of the NSF supported Workshop
in Analysis and Probability at Texas A\&M University, 2016.

We thank Steve Dilworth, Bill Johnson, James Lee, and Assaf Naor
for useful comments related to the subject of this paper, and the
anonymous referee for very careful reading of the manuscript and
for numerous helpful suggestions that led to the improvement of
the presentation.

\begin{small}

\renewcommand{\refname}{\section{References}}

\textsc{Department of
Mathematics and Computer Science, St. John's University, 8000 Utopia Parkway, Queens, NY 11439, USA} \par
  \textit{E-mail address}: \texttt{ostrovsm@stjohns.edu} \par
  \smallskip

\textsc{Department of Mathematics, Miami University,
Oxford, OH 45056, USA} \par
  \textit{E-mail address}: \texttt{randrib@miamioh.edu} \par

\end{small}

\end{document}